\numberwithin{equation}{section}
\newtheorem{theorem}{Theorem}[section]
\newtheorem{prop}[theorem]{Proposition}
\newtheorem{lem}[theorem]{Lemma}
\newtheorem{cor}[theorem]{Corollary}
\theoremstyle{remark}
\newtheorem{remark}[theorem]{Remark}
\newtheorem{rmk}[theorem]{Remarks}
\def\R{\mathbb{R}}
\def\H{\mathbb{H}}
\def\L{\mathcal{L}}
\def\Lis{\mathcal{L}{\rm{is}}}
\def\id{{\rm{id}}}
\def\ev{\mathsf{C}}
\def\supp{{\rm{supp}}}
\def\Ric{{\rm{Rc}}}
\def\uf{{\rm{unif}}}
\def\cp{{\rm{cp}}}
\def\kp{\kappa_{\sf{p}}}
\def\M{\mathsf{M}}
\def\p{\mathsf{p}}
\def\x{x_{\mathsf{p}}}
\def\B{\mathbb{B}(0,r)}
\def\U{\mathsf{O}}
\def\Uk{\mathsf{O}_{\kappa}}
\def\Uki{\mathsf{O}_{\iota}}
\def\Ukp{\mathsf{O}_{\kp}}
\def\Q{\mathbb{B}^{m}}
\def\Qk{\mathbb{B}^{m}_{\kappa}}
\def\Qi{\mathbb{B}^{m}}
\def\pk{\pi_\kappa}
\def\F{\mathfrak{F}}
\def\N{\mathbb{N}}
\def\tm{\theta^{\ast}_{\mu}}
\def\ttm{{\Theta}^{\ast}_{\mu}}
\def\ttmi{{\Theta}^{\mu}_{\ast}}
\def\tl{\theta^{\ast}_{\lambda,\mu}}
\def\ttl{{\Theta}^{\ast}_{\lambda,\mu}}
\def\tu{{T}_{\mu}}
\def\tui{{T}^{-1}_{\mu}}
\def\ttu{\tilde{T}_{\mu}}
\def\ttui{\tilde{T}^{-1}_{\mu}}
\def\kb{\varphi^{\ast}_{\iota}}
\def\kf{\psi^{\ast}_{\iota}}
\def\kbk{\varphi^{\ast}_{\kappa}}
\def\kfk{\psi^{\ast}_{\kappa}}
\def\kft{\psi^{\ast}_{\tilde{\kappa}}}
\def\kbp{\varphi^{\ast}_{\kp}}
\def\kfp{\psi^{\ast}_{\kp}}
\def\ut{{u}_{\lambda,\mu}}
\def\rh{{\varrho}^{\ast}_{\lambda}}
\def\tu{{T}_{\mu}}
\def\ez{\mathbb{E}_{0}}
\def\ef{\mathbb{E}_{1}}
\def\sez{\mathbb{SE}_{0}}
\def\sef{\mathbb{SE}_{1}}
\def\Ub{\mathbb{U}}
\def\K{\mathbb{K}}
\def\Re{\mathcal{R}}
\def\Rek{\mathcal{R}_{\kappa}}
\def\Rc{\mathcal{R}^c}
\def\Rck{\mathcal{R}^c_{\kappa}}
\def\Dfi{\rm{Diff}^{\hspace{.1em}\infty}}
\def\im{\rm{im}}
\def\Hom{{\text{Hom}}}
\def\X{\mathbb{X}}
\def\Xk{\mathbb{X}_{\kappa}}
\def\Xo{\tilde{\varsigma}}
\def\Xt{\varsigma}
\def\Am{{\mathcal{A}}_{\mu}}
\def\Al{{\mathcal{A}}_{\mu}}
\def\Af{{\mathcal{A}}_{1}(\mu)}
\def\As{{\mathcal{A}}_{2}(\mu)}
\begin{document}

\title[Parameter-Dependent Diffeomorphisms and Applications]{A Family of Parameter-Dependent Diffeomorphisms Acting on Function Spaces over a Riemannian Manifold and Applications to Geometric Flows}

\author[Y. Shao]{Yuanzhen Shao}
\address{Department of Mathematics,
         Vanderbilt University, 
         1326 Stevenson Center, 
         Nashville, TN 37240, USA}
\email{yuanzhen.shao@vanderbilt.edu}

\subjclass[2010]{54C35, 58J99, 35K55, 53C44, 35B65}
\keywords{Function spaces on Riemannian manifolds, regularity of solutions to parabolic equations, real analytic solutions, geometric evolution equations, the Ricci flow, the surface diffusion flow, the mean curvature flow, maximal regularity, the implicit function theorem}

\begin{abstract}
It is the purpose of this article to establish a technical tool to study regularity of solutions to parabolic equations on manifolds. As applications of this technique, we prove that solutions to the Ricci-DeTurck flow, the surface diffusion flow and the mean curvature flow enjoy joint analyticity in time and space, and solutions to the Ricci flow admit temporal analyticity.
\end{abstract}
\maketitle

\section{\bf Introduction}

In this paper, we establish a localized translation technique on a Riemannian manifold $\M$, which induces a family of parameter-dependent diffeomorphisms acting on tensor fields over $\M$. The background manifolds of geometric interest to us are called uniformly regular Riemannian manifolds, a concept introduced by H. Amann in \cite{Ama13, AmaAr}. These manifolds may be non-compact, or even non-complete. 

In some publications, regularity of solutions to partial differential equations is established by means of the implicit function theorem in conjunction with a translation argument, see for example \cite{Ange902,EscPruSim0302,EscSim96,Lunar95}. More precisely, one introduces parameters representing translation in space and time into the solution of some differential equation. Then we study the parameter-dependent equation satisfied by this transformed solution. The implicit function theorem yields the smooth dependence of the solution to the parameter-dependent problem upon the parameters. This regularity property is inherited by the original solution. An advantage of this technique is reflected by its power to prove analyticity of solutions to differential equations, which is not approachable through the classical method of bootstrapping. We first consider the usual translation $(t,x)\mapsto(t+\lambda,x+\mu)$ involving both spatial and time variables. However, the global nature of this transformation creates a barrier to applying it to functions over manifolds. So we desire an alternative that only shifts the variables ``locally", which empowers us to define the translation in some local chart without changing the value of the functions outside. The idea of a localized translation is first put to use in \cite{EscPruSim03} to study regularity of solutions to elliptic and parabolic equations in Euclidean spaces. The basic building block of \cite{EscPruSim03} is rescaling translations by some cut-off function so that they vanish outside a precompact neighbourhood. The results therein break down the aforementioned barrier and thus enable a local action $\Theta_{\lambda,\mu}$ with parameters $(\lambda,\mu)\in{\R}\times{\R}^m$ to be defined on a manifold $\M$, which induces a parameter-dependent diffeomorphism ${\ttl}$ acting on functions, or tensor fields, over $\M$. Throughout, ${\M}$ always denotes an $m$-dimensional uniformly regular Riemannian manifold, and for any interval $I$ containing $0$, $\dot{I}:= I \setminus\{0\}$. For any topological set $U$, $\mathring{U}$ denotes the interior of $U$. In the case that $U$ consists of only one point, we set $\mathring{U}:=U$. The main result of this paper can be stated as follows:
\begin{theorem}
\label{S1: Theorem}
Let $k\in\N\cup\{\infty,\omega\}$ with $\omega$ being the symbol for real analyticity. Suppose that $\M$ is a $C^k$-uniformly regular Riemannian manifold, and $u\in{BC(I\times{\M})}$. Then $u\in{C^{k}(\mathring{I}\times\mathring{\M})}$ iff for any $(t_0,\p) \in\mathring{I}\times\mathring{\M}$, there exists $r=r(t_0,\p)>0$ and a family of corresponding parameter-dependent diffeomorphisms $\{\ttl:(\lambda,\mu)\in\B\}$ such that 
\begin{align*}
[(\lambda,\mu)\mapsto{\ttl}u]\in{C^{k}({\B},BC(I\times{\M}))}.
\end{align*}
\end{theorem}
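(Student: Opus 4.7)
The plan is to prove the two implications separately, exploiting the explicit structure of the parameter-dependent diffeomorphism constructed in the preceding sections. Recall that $\ttl u = u \circ \Theta_{\lambda,\mu}$, where $\Theta_{\lambda,\mu}$ is a $C^k$-diffeomorphism of $I \times \M$ supported in a precompact neighborhood of $(t_0, \p)$, depending $C^k$ (resp.\ real-analytically) on $(\lambda,\mu) \in \B$. In a chart $(\U, \varphi)$ about $\p$, equipped with a cut-off $\chi$ that equals $1$ on a smaller neighborhood of $(t_0,\p)$ and vanishes off $\U$, this diffeomorphism takes the form
\[
\Theta_{\lambda,\mu}(t,x) = \bigl(t + \chi(t,x)\lambda,\; \varphi^{-1}(\varphi(x) + \chi(t,x)\mu)\bigr)
\]
on $\U$ and equals the identity elsewhere.

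For the forward direction $(\Rightarrow)$, assuming $u \in C^k(\mathring I \times \mathring \M)$, I would note that $\ttl u - u$ is supported on a fixed precompact set $K \subset \mathring I \times \mathring \M$, on which $u$ is $C^k$ by hypothesis. On $K$, the map $(t,x,\lambda,\mu) \mapsto u(\Theta_{\lambda,\mu}(t,x))$ is a finite composition of $C^k$ maps between open subsets of Euclidean space, hence itself $C^k$. A standard Banach-space-valued composition lemma---augmented with Cauchy estimates in the analytic case---then yields that $(\lambda,\mu) \mapsto \ttl u$ lies in $C^k(\B, BC(I \times \M))$.

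For the backward direction $(\Leftarrow)$, I would apply the evaluation-at-$(t_0,\p)$ functional on $BC(I \times \M)$, which is bounded linear and therefore preserves $C^k$ regularity. Since $\chi(t_0, \p) = 1$, this yields
\[
(\lambda,\mu) \mapsto (\ttl u)(t_0, \p) = u\bigl(t_0 + \lambda,\; \varphi^{-1}(\varphi(\p) + \mu)\bigr) \in C^k(\B).
\]
The map $\Psi: (\lambda,\mu) \mapsto \bigl(t_0 + \lambda,\, \varphi^{-1}(\varphi(\p) + \mu)\bigr)$ is a $C^k$-diffeomorphism from a ball about $0$ onto an open neighborhood $W$ of $(t_0, \p)$, so $u|_W = (u \circ \Psi) \circ \Psi^{-1}$ is $C^k$ on $W$. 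Letting $(t_0, \p)$ range over $\mathring I \times \mathring \M$ yields $u \in C^k(\mathring I \times \mathring \M)$.

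The main obstacle will be the real-analytic case $k = \omega$. In the forward direction, preservation of convergent power series under the composition $u \circ \Theta_{\lambda,\mu}$ requires uniform control on arbitrarily high derivatives of $\Theta_{\lambda,\mu}$, which is exactly where the uniformly regular structure of $\M$, together with the quantitative estimates developed in the earlier sections, becomes indispensable. In contrast, the finite-order and $C^\infty$ cases reduce essentially to the chain rule for Banach-space-valued maps combined with the compact-support observation.
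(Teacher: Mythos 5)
Your proof is correct and follows essentially the same route as the paper: the ``if'' direction is exactly the paper's argument (the bounded, hence analytic, evaluation functional at $(t_0,\p)$ recovers $(\lambda,\mu)\mapsto u(t_0+\lambda,\psi_{\iota}(\mu))$ and thus local $C^k$ regularity of $u$), and the ``only if'' direction rests on the same compact-support observation plus $C^k$/Cauchy-estimate dependence of the truncated composition on the parameters, which the paper obtains by invoking Theorem~\ref{differentiability of ttm on all function spaces} (ultimately the Euclidean result of Escher--Pr\"uss--Simonett) rather than re-deriving it. The only cosmetic difference is that the paper organizes the time--space case as an application of Theorem~\ref{main theorem-iff} to the product manifold $I\times\M$.
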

Suppose that $u$ is the solution to some differential equation. As we will see in the following, maximal regularity theory established in \cite{ShaoSim13} along with an implicit function theorem argument allows us to obtain the smooth dependence of the solution ${\ttl}u$ to some parameter-dependent problem upon the translation parameters. Theorem~\ref{S1: Theorem} then avails us in acquiring regularity of the solution $u$ to the original differential equation. 

There are two types of geometric evolution equations stimulating my interest in developing the aforementioned technique. First of them is the evolution of metrics via certain differential equations. One of the most famous and representative examples among them is the Ricci flow: 
\begin{align}
\label{S1: Ricci}
\partial_t g=-2\Ric(g), \hspace{1em} g(0)=g_0,
\end{align}
where $\Ric(g)$ is the Ricci tensor with respect to the evolving metric $g$ and $g_0$ is the initial metric. The study of this equation is initiated by R. Hamilton in his groundbreaking paper \cite{Ham82}. It serves as the primary tool in G. Perelman's solution \cite{Perel59, Perel09} to the Poincar\'e conjecture. Many other authors also contribute to this subject from different perspectives, see \cite{Cai10, Chow04, MorTian07}. R.~Hamilton in \cite{Ham82} proves short-time existence of a smooth solution to the evolution equation \eqref{S1: Ricci} with smooth initial data $g_0$. But his formulation of the problem loses parabolicity, and the proof is based on the Nash-Moser theorem. Shortly afterwards, D.~DeTurck \cite{DeTur83} modifies this equation to be parabolic by removing the symmetry caused by the invariance of the Ricci tensor under diffeomorphisms, which forces the original equation to be weakly parabolic. The investigation into analyticity of solutions commences with a result of S.~Bando \cite{Bando87} showing that, for $0<t<T$ with some positive $T$, the manifold $(\M,g(t))$ is real analytic in normal coordinates with respect to $g(t)$. On non-compact manifolds, B.~Kotschwar \cite{Kots13} proves a local version of Bando's theorem without a global bound of the Riemannian curvature tensor. A temporal analyticity result is stated in \cite{KotsArx}. The author also shows a local time-space analyticity result in normal coordinates therein. In Section 4, we will see that solutions to the Ricci-DeTurck flow
\begin{align}
\partial_t g=-2\Ric(g)+\L_{W_g}g, \hspace{1em} g(0)=g_0,
\end{align}
enjoy joint analyticity in time and space for any initial metric in the class $C^{2+\alpha}$ with respect to a fixed atlas. Here $\L_{W_g}g$ denotes the Lie derivative of $g$ with respect to a vector field $W_g$, see Section~4 for a precise definition. Based on this result, we will present an alternative proof for the time-analyticity of solutions to the Ricci flow, which is shorter than the one for \cite[Theorem~1]{KotsArx}. We shall point out that, with an arbitrary $C^{\omega}$-atlas, spatial analyticity of solutions to \eqref{S1: Ricci} with smooth initial metric in general cannot be true. Indeed, if $g$ is an analytic solution to \eqref{S1: Ricci} with $g_0$ smooth, then for every smooth, but not analytic, diffeomorphism $\phi:\M\rightarrow\M$, $\phi^{\ast}{g}$ solves \eqref{S1: Ricci} with smooth initial metric $\phi^{\ast}g_0$. However, in general, $\phi^{\ast}{g}$ is not analytic. This observation suggests that in order to prove the spatial or joint analyticity of the solution ${g}$ with arbitrary fixed atlas, we should expect the initial metric $g_0$ to be analytic as well.
\smallskip\\
The second type of geometric evolution equations deals with the deformation of manifolds $\Gamma(t)$ driven by by their curvatures, including the surface diffusion flow
\begin{align}
\label{S1: SDF}
V(t)=-\Delta_{\Gamma(t)}H_{\Gamma(t)},\hspace{1em} \Gamma(0)=\Gamma_0,
\end{align}
and the averaged mean curvature flow
\begin{align}
\label{S1: MCF}
V(t)=H_{\Gamma(t)}-h_{\Gamma(t)},\hspace{1em} \Gamma(0)=\Gamma_0,
\end{align}
where $V(t)$ and $H_{\Gamma(t)}$ denote the normal velocity and the mean curvature of $\Gamma(t)$, respectively. $h_{\Gamma(t)}$ stands for the average of $H_{\Gamma(t)}$ on $\Gamma(t)$. Meanwhile, $\Delta_{\Gamma(t)}$ is the Laplace-Beltrami operator on ${\Gamma(t)}$. Existence and uniqueness of a local smooth solution to equation~\ref{S1: SDF} has been established by J.~Escher, U.~Mayer and G.~Simonett in \cite{EscMaySim98}. Later, it is proven by J.~Escher and P.B.~Mucha \cite{EscMuc10} that this result admits initial surfaces in Besov spaces. Existence results for initial surfaces with lower regularity are obtained in \cite{Koch12} for graph-like hypersurfaces. Results concerning lifespans of solutions to \eqref{S1: SDF} can be found in \cite{EscMaySim98, Wheel11, Wheel12}. The reader may refer to \cite{LecSim13} for a more detailed historical account of this problem. 
\smallskip\\
The averaged mean curvature flow is introduced by M. Gage \cite{Gage86} and G. Huisken \cite{Huis87}. In \cite{Huis87}, the author proves global existence of smooth solutions for a smooth, uniformly convex initial surfaces. This constraint on the initial data is later loosed by J.~Escher and G.~Simonett \cite{EscSim98} to admit any initial surface in $C^{1+\alpha}$. The prototype of \eqref{S1: MCF}, the (unaveraged) mean curvature flow, which is obtained by removing the term $h_{\Gamma(t)}$ in \eqref{S1: MCF}, is first studied in the pioneering work of K. Brakke \cite{Brak78}. Like equation \eqref{S1: MCF}, solutions to the mean curvature flow smooth the initial data immediately. A core theme in the study of the mean curvature flow is to investigate the development and structure of singularities, see e.g. \cite{Huis99a, Huis99b, Huis09, White00, White03}. Similar to the Ricci-DeTurck flow, it will be shown in Section 5 and 6 that solutions to the three above-mentioned equations are indeed real analytic jointly in time and space. 

Section 2 is of preparatory character, wherein we introduce some basic concepts and properties of function spaces and tensor fields on a uniformly regular Riemannian manifold $\M$. They serve as the stepstone to the theory of differential equations on $\M$, see \cite{Ama13, AmaAr,ShaoSim13} for related work.
\smallskip\\
Section 3 plays a central role in this paper. Therein we introduce the aforesaid parameter-dependent diffeomorphism technique. A major obstruction of bringing in the localized translations on $\M$ lies in how to introduce parameters so that the transformed functions and differential operators depend ideally on the parameters as long as the original functions and operators are smooth enough around the ``center" of the localized translations. Thanks to the discussions in \cite{ShaoSim13} and Section~2, we can set up these properties based on the prototypical results in \cite{EscPruSim03}. 

Although in this paper our discussion is restricted to geometric evolution equations on compact manifolds, the reader should be aware that the techniques herein, as is shown in \cite{ShaoSim13} for the Yamabe flow on non-compact and non-complete manifolds, also apply to equations on general uniformly regular Riemannian manifolds. 

We mention that the method to combine the implicit function theorem and translations to study regularity of solutions to parabolic evolution equations on compact manifolds is also addressed in \cite{EscPro06} by J. Escher and G. Prokert. In their work, the authors first construct a family of global real analytic vector fields. The induced one-parameter group of diffeomorphisms plays the same role as ${\ttl}$ in this paper. It is known that the existence of such a one-parameter group is valid on compact manifolds, but is not guaranteed in general. 

{\textbf {Assumptions on manifolds:}}
We list some geometric assumptions on the Riemannian manifolds treated in this paper. They provide the basis for analyzing function spaces and tensor fields on manifolds. This work is originally addressed in \cite{Ama13, AmaAr}. 
\smallskip\\
Let $({\M},g)$ be a $C^{\infty}$-Riemannian manifold of dimension $m$ with or without boundary endowed with $g$ as its Riemannian metric such that its underlying topological space is separable and metrizable. An atlas $\mathfrak{A}:=({\Uk},\varphi_{\kappa})_{{\kappa}\in \mathfrak{K}}$ for ${\M}$ is said to be normalized if 
\begin{align*}
\varphi_{\kappa}({\Uk})=
\begin{cases}
\Q, \hspace*{1em}& \Uk\subset\mathring{\M},\\
\Q\cap\H^m, &\Uk\cap\partial\M \neq\emptyset,
\end{cases}
\end{align*}
where $\H^m$ is the closed half space $\R^{+}\times\R^{m-1}$ and $\Q$ is the unit Euclidean ball centered at the origin in ${\R}^m$. We put $\Qk:=\varphi_{\kappa}({\Uk})$ and  $\psi_{\kappa}:=\varphi_{\kappa}^{-1}$. 
\smallskip\\
The atlas $\mathfrak{A}$ is said to have \emph{finite multiplicity} if there exists $K\in{\N}$ such that any intersection of more than $K$ coordinate patches is empty. Put
\begin{align*}
\mathfrak{N}(\kappa):=\{\eta\in\mathfrak{K}:\mathsf{O}_{\eta}\cap\Uk\neq\emptyset \}.
\end{align*} 
The finite multiplicity of $\mathfrak{A}$ and the separability of $\M$ imply that $\mathfrak{K}$ is countable.
If two real-valued functions $f$ and $g$ are equivalent in the sense that $f/c\leq g\leq cf$ for some $c\geq 1$, then we write $f\sim g$.
\smallskip\\
An atlas $\mathfrak{A}$ is said to fulfil the \emph{uniformly shrinkable} condition, if it is normalized and there exists $0<r<1$ such that $\{\psi_{\kappa}(r{\Qk}):\kappa\in\mathfrak{K}\}$ is a cover for ${\M}$. 
\smallskip\\
Following H. Amann \cite{Ama13, AmaAr}, we say that a manifold $(\M,g)$ endowed with an atlas $\mathfrak{A}$ is a {\bf{$C^{\infty}$-uniformly regular Riemannian manifold}}, or simply {\bf{uniformly regular Riemannian manifold}},  if 
\begin{itemize}
\item[(R1)] $\mathfrak{A}$ is uniformly shrinkable and has finite multiplicity.
\item[(R2)] $\|\varphi_{\eta}\circ\psi_{\kappa}\|_{k,\infty}\leq{c(k)}$, $\kappa\in\mathfrak{K}$, $\eta\in\mathfrak{N}(\kappa)$, and $k\in{\N}_0$.
\item[(R3)] ${\psi_{\kappa}^{\ast}}g\sim{g_m}$, $\kappa\in\mathfrak{K}$. Here $g_m$ denotes the Euclidean metric on ${\R}^m$ and ${\psi_{\kappa}^{\ast}}g$ denotes the pull-back metric of $g$ by ${\psi_{\kappa}}$.
\item[(R4)] $\|{\psi_{\kappa}^{\ast}}g\|_{k,\infty}\leq c(k)$, $\kappa\in\mathfrak{K}$ and $k\in\N_0$.
\end{itemize}
Here $\|u\|_{k,\infty}:=\max_{|\alpha|\leq k}\|\partial^{\alpha}u\|_{\infty}$, and it is understood that a constant $c(k)$, like in (R2), depends only on $k$. ${\N}_0$ is the set of all natural numbers including $0$. An atlas $\mathfrak{A}$ satisfying (R1) and (R2) is called a \emph{uniformly regular atlas}. (R3) reads as 
\begin{align*}
|\xi|^2/c\leq{\psi_{\kappa}^{\ast}g(x)(\xi,\xi)}\leq{c|\xi|^2},\hspace{.5em} \text{ for any } x\in{\Qk},\xi\in{\R}^m \text{ and some } c\geq{1}
\end{align*}
uniformly in $\kappa$. In particular, any compact manifold is uniformly regular.
The reader may consult \cite{Ama13b} for examples of uniformly regular Riemannian manifolds.
\smallskip\\
Given any Riemannian manifold $\M$ without boundary, in virtue of a result of R.E. Greene \cite{Greene78} stating that there exists a complete Riemannian metric with bounded geometry on $\M$, we can always find a Riemannian metric $g_c$ making $(\M,g_c)$ uniformly regular, see \cite[Theorem~2']{Greene78} and \cite[Remark~1.7]{MullerAx}. However, this result is of restricted interest, since in most of the PDE problems, we are forced to work with a fixed background metric whose compatibility with the metric $g_c$ is unknown.
\smallskip\\
A uniformly regular Riemannian manifold $\M$ admits a \emph{localization system subordinate to} $\mathfrak{A}$, by which we mean a family $(\pi_{\kappa},\zeta_{\kappa})_{\kappa\in\mathfrak{K}}$ satisfying:
\begin{itemize}
\item[(L1)] ${\pk}\in\mathcal{D}({\Uk},[0,1])$ and $(\pi_{\kappa}^{2})_{\kappa\in{\mathfrak{K}}}$ is a partition of unity subordinate to $\mathfrak{A}$.
\item[(L2)] $\zeta_{\kappa}:={\kbk}\zeta$ with $\zeta\in\mathcal{D}({\Q},[0,1])$ satisfying that $\zeta|_{\supp({{\kfk}\pi_{\kappa}})}\equiv 1$, $\kappa\in\mathfrak{K}$.
\item[(L3)] $\|\psi_{\kappa}^{\ast}{\pk}\|_{k,\infty}+\|\zeta\|_{k,\infty} \leq{c(k)}$, for $\kappa\in\mathfrak{K}$, $k\in{\N}_0$.
\end{itemize}
The reader may refer to \cite[Lemma~3.2]{Ama13} for a proof. If, in addition, the atlas $\mathfrak{A}$ and the metric $g$ are real analytic, we say that $(\M,g)$ is a {\bf{$C^{\omega}$-uniformly regular Riemannian manifold}}.
\smallskip\\
Lastly, for each $k\in\N$, the concept of {\bf{$C^k$-uniformly regular Riemannian manifold}} is defined by modifying (R2), (R4) and (L1)-(L3) in an obvious way.

{\textbf{Notations:}}
Let ${\K}\in\{{\R},\mathbb{C}\}$. For any open subset $U\subseteq{\R}^m$, we abbreviate ${\F}^{s}(U,\mathbb{K})$ to ${\F}^{s}(U)$, where $s\geq 0$ and ${\F}\in \{bc,BC,W_p,H_p\}$. The precise definitions for these function spaces will be presented in Section 2. Similarly, ${\F}^{s}({\M})$ stands for the corresponding $\mathbb{K}$-valued spaces defined on the manifold ${\M}$. 
\smallskip\\
Let $\| \cdot \|_{\infty}$, $\| \cdot \|_{s,\infty}$, $\| \cdot \|_{p}$, $\| \cdot \|_{s,p}$ and 
$\|\cdot\|_{H^{s}_p}$ denote the usual norm of the Banach spaces $BC(U),BC^{s}(U)$, $L_p(U),W^{s}_p(U),H^{s}_p(U)$, respectively. Their counterparts on ${\M}$ are expressed by $\| \cdot \|_{\F}^{\M}$ with $\| \cdot \|_{\F}$ being any of the norms defined on $U$.
\smallskip\\
The notation $\mathcal{T}^{\sigma}_{\tau}{\M}$ stands for the $C^{\infty}({\M})$-module of all smooth sections of the $(\sigma,\tau)$-tensor bundle $T^{\sigma}_{\tau}{\M}:=T{\M}^{\otimes{\sigma}}\otimes{T^{\ast}{\M}^{\otimes{\tau}}}$  for $\sigma,\tau\in{\N}_0$, where $T{\M}$ and $T^{\ast}{\M}$ denote the tangent and the cotangent bundle of ${\M}$, respectively. For abbreviation, we set $\mathbb{J}^{\sigma}:=\{1,2,\ldots,m\}^{\sigma}$, and $\mathbb{J}^{\tau}$ is defined alike. Given a coordinate $\psi=\{x^1,\ldots,x^m\}$, $(i):=(i_1,\ldots,i_{\sigma})\in\mathbb{J}^{\sigma}$ and $(j):=(j_1,\ldots,j_{\tau})\in\mathbb{J}^{\tau}$, we set
\begin{align*}
\frac{\partial}{\partial{x}^{(i)}}:=\frac{\partial}{\partial{x^{i_1}}}\otimes\cdots\otimes\frac{\partial}{\partial{x^{i_{\sigma}}}}, \hspace*{.5em} dx^{(j)}:=dx^{j_1}\otimes{\cdots}\otimes{dx}^{j_{\tau}}.
\end{align*}
The local representation of $a\in \mathcal{T}^{\sigma}_{\tau}{\M}$ with respect to these coordinates is given by 
\begin{align*}
a=a^{(i)}_{(j)} \frac{\partial}{\partial{x}^{(i)}} \otimes dx^{(j)}, \hspace{1em}\text{ with } a^{(i)}_{(j)}\in C^{\infty}(\Uk).
\end{align*}
For any two Banach spaces $X,Y$, $X\doteq Y$ means that they are equal in the sense of equivalent norms.
The notation $\Lis(X,Y)$ stands for the set of all bounded linear isomorphisms from $X$ to $Y$.
\medskip

\section{\bf Function Spaces on Uniformly Regular Riemannian Manifolds}

In Section~2, we will review some prerequisites on function spaces and tensor fields over uniformly regular Riemannian manifolds. These materials are developed by H. Amann in  \cite{Ama13, AmaAr} for weighted sections of tensor fields defined on manifolds with ``singular ends" characterized by a ``singular function" $\rho\in C^{\infty}(\M,(0,\infty))$. Such manifolds are uniformly regular iff the singular datum $\rho\sim 1_{\M}$. The work therein has been employed to establish continuous maximal regularity on uniformly regular Riemannian manifolds for parabolic differential operators in \cite{ShaoSim13}. I will state the definitions and some properties of these spaces without giving proofs. The reader may refer to the aforementioned references in this paragraph for details.

Let $(\M,g)$ be a uniformly regular Riemannian manifold. The extension of the Levi-Civita connection on $\mathcal{T}^{\sigma}_{\tau}{\M}$ is denoted by $\nabla=\nabla_g$. Set $\nabla_{i}:=\nabla_{\partial_{i}}$ with $\partial_{i}=\frac{\partial}{\partial{x^i}}$. The generalized metric $g^{\tau}_{\sigma}$ on $T^{\sigma}_{\tau}{\M}$ is still written as $(\cdot|\cdot)_{g}$. Meanwhile $(\cdot|\cdot)_{g^{\ast}}$ stands for the induced contravariant metric. In addition,
\begin{align*}
|\cdot|_g:\mathcal{T}^{\sigma}_{\tau}{\M}\rightarrow{C^{\infty}}({\M}),\hspace*{.5em} a\mapsto\sqrt{(a|a)_g}
\end{align*}
is called the (vector bundle) \emph{norm} induced by $g$.
\smallskip\\
Henceforth, we assume that $V$ is a $\K$-valued tensor bundle on $\M$, i.e.,
\begin{align*}
V=V^{\sigma}_{\tau}:=\{T^{\sigma}_{\tau}\M, (\cdot|\cdot)_g\} 
\end{align*}
for some $\sigma,\tau\in\N_0$, and 
\begin{align*}
E=E^{\sigma}_{\tau}:=\{\K^{m^{\sigma}\times m^{\tau}},(\cdot|\cdot)\}, 
\end{align*}
where $(a|b):=$trace$(b^{\ast}a)$ with $b^{\ast}$ being the conjugate matrix of $b$. By setting $N=m^{\sigma+\tau}$ , we can identify $\F^s(\M,E)$ with $\F^s(\M)^N$. The notation $\Gamma(\M,V)$ stands for the set of all sections of the $(\sigma,\tau)$-tensor bundle. 
\smallskip\\
Throughout the rest of this paper, we always assume that 
\smallskip
\begin{mdframed}
\begin{itemize}
\item $({\M},g)$ is a uniformly regular Riemannian manifold.
\item $s\geq 0$, and $1 < p < \infty$.
\item $(\pi_{\kappa},\zeta_{\kappa})_{\kappa\in\mathfrak{K}}$ is a localization system subordinate to $\mathfrak{A}$.
\item $\sigma,\tau\in \N_0$, $V=V^{\sigma}_{\tau}:=\{T^{\sigma}_{\tau}\M, (\cdot|\cdot)_g\}$ and $E=E^{\sigma}_{\tau}:=\{\K^{m^{\sigma}\times m^{\tau}},(\cdot|\cdot)\}$.
\end{itemize}
\end{mdframed}
\smallskip
For $K\subset \M$, we put $\mathfrak{K}_{K}:=\{\kappa\in \mathfrak{K}: \Uk\cap K\neq\emptyset\}$. Then, given $\kappa\in\mathfrak{K}$
\begin{align*}
\Xk:=\begin{cases}
\R^m \hspace*{1em}\text{if }\kappa\in \mathfrak{K}\setminus \mathfrak{K}_{\partial\M},\\
\H^m \hspace*{1em}\text{otherwise,}
\end{cases}
\end{align*}
endowed with the Euclidean metric $g_m$. Given $a\in \Gamma(\M,V)$, we define ${\kfk}a$ in $E$ as
\begin{align*}
{\kfk}a=[a^{(i)}_{(j)}],\hspace*{1em}\text{ with }(i)\in\mathbb{J}^{\sigma}, (j)\in\mathbb{J}^{\tau}.
\end{align*}
Here $[a^{(i)}_{(j)}]$ stands for the $(m^{\sigma}\times m^{\tau})$-matrix with $a^{(i)}_{(j)}$ in the $((i),(j))$ position, and $(i)$, $(j)$ are arranged lexicographically. Next we define:
\begin{align*}
\Rck:{L}_{1,loc}({\M},V)\rightarrow{L}_{1,loc}({\Xk},E),\hspace*{.5em} u\mapsto{\psi_{\kappa}^{\ast}({\pk}u)},
\end{align*}
and
\begin{align*}
\Rek:{L}_{1,loc}({\Xk},E)\rightarrow{L}_{1,loc}({\M},V),\hspace*{.5em} v_{\kappa}\mapsto{\pk}{\kbk}v_{\kappa}.
\end{align*}
Here and in the following it is understood that a partially defined and compactly supported tensor field is automatically extended over the whole base manifold by identifying it to be zero sections outside its original domain. Moreover, let	
\begin{align*}
\Rc:{L}_{1,loc}({\M},V)\rightarrow\boldsymbol{L}_{1,loc}(\X,E),\hspace*{.5em} u\mapsto{(\Rck u)_{\kappa}},
\end{align*}
and
\begin{align*}
\Re:\boldsymbol{L}_{1,loc}(\X,E)\rightarrow{L}_{1,loc}({\M},V),\hspace*{.5em} (v_{\kappa})_{\kappa}\mapsto{\sum_{\kappa}\Rek v_{\kappa}}
\end{align*}
with $\boldsymbol{L}_{1,loc}(\X,E):=\prod_{\kappa}{L}_{1,loc}({\Xk},E)$.
Let $\mathbb{A}$ be a countable index set and $E_{\alpha}$ be Banach spaces for $\alpha\in\mathbb{A}$. Then we put $\mathbf{E}:=\prod_{\alpha}E_{\alpha}$. We endow $\mathbf{E}$ with the product topology, that is, the the coarsest topology for which all projections $pr_{\beta}:\mathbf{E}\rightarrow{E_{\beta}},(e_{\alpha})_{\alpha}\mapsto{e_{\beta}}$ are continuous. For $1\leq{q}\leq{\infty}$, we denote by $l_q(\mathbf{E})$ the linear subspace of $\mathbf{E}$ consisting of all $\boldsymbol{x}=(x_{\alpha})$ such that
\begin{align*}
\|\boldsymbol{x}\|_{l_q(\mathbf{E})}:=
\begin{cases}
(\sum_{\alpha}\|x_{\alpha}\|^q_{E_{\alpha}})^{1/q},\hspace{1em}&1\leq{q}<{\infty},\\
\sup_{\alpha}\|x_{\alpha}\|_{E_{\alpha}},& \hspace{.5em}q=\infty
\end{cases}
\end{align*}
is finite. Then $l_q(\mathbf{E})$ is a Banach space with norm $\|\cdot\|_{l_q(\mathbf{E})}$.
\smallskip\\
For ${\F}\in \{bc,BC,W_{p},H_{p} \}$, we put $\boldsymbol{\F}^s:=\prod_{\kappa}{\F}^s_{\kappa}$, where ${\F}^s_{\kappa}:={\F}^s(\Xk,E)$. 

Throughout the rest of this subsection, let $k\in{\N}_{0}$. The Sobolev space $W^{k}_p({\M},V)$ is defined as the completion of $\mathcal{D}({\M},V)$ in $L_{1,loc}({\M},V)$ with respect to the norm
\begin{align*}
\|\cdot\|^{\M}_{k,p}: u\mapsto(\sum_{i=0}^{k}\||\nabla^{i}u|_{g}\|_p^p)^{\frac{1}{p}}.
\end{align*}
Note that $W^{0}_p(\M,V)\doteq L_p(\M,V)$. The Bessel potential spaces are defined by means of interpolation
\begin{align*}
H^{s}_p({\M},V):=
\begin{cases}
[W^{k}_p({\M},V),W^{k+1}_p({\M},V)]_{s-k} \hspace{1em}&\text{ for }k<s<k+1,\\
[W^{k-1}_p({\M},V),W^{k+1}_p({\M},V)]_{1/2} \hspace{1em}&\text{ for }s=k\in\N,\\
L_p({\M},V)&\text{ for }s=0.
\end{cases}
\end{align*}
Here $[\cdot,\cdot]_{\theta}$ is the complex interpolation method \cite[Example~I.2.4.2]{Ama95}. In particular, by \cite[Corollary~7.2(i)]{Ama13}, $H^{k}_p({\M},V)\doteq W^{k}_p({\M},V)$, for $k\in{\N}_{0}$. Analogously, the Sobolev-Slobodeckii spaces are defined as
\begin{align*}
W^{s}_p({\M},V):=(W^{k}_p({\M},V),V),W^{k}_p({\M},V))_{s-k,p},\hspace{1em} k<s<k+1, 
\end{align*}
where $(\cdot,\cdot)_{\theta,p}$ is the real interpolation method, see \cite[Example~I.2.4.1]{Ama95} and \cite[Section~1.3]{Trie78}.

\begin{prop}
\label{S2: Retraction of SB}
Let ${\F}\in\{W_p,H_p\}$. Then $\Re$ is a retraction from $l_p(\boldsymbol{\F}^{s})$ onto ${\F}^{s}({\M},V)$, and $\Rc$ is a coretraction.
\end{prop}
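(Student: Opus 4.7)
The plan is to verify the three defining properties of a retraction: boundedness of $\Rc$, boundedness of $\Re$, and the identity $\Re\circ\Rc=\id$. I would prove everything first for integer $k\in\N_0$ (where $W^k_p \doteq H^k_p$) using the intrinsic norm based on $\nabla$ and $|\cdot|_g$, and then extract the general case by interpolation: complex interpolation of the retraction diagram for $H^s_p$ and real interpolation for $W^s_p$, both of which commute with the formation of $l_p$-valued direct sums, together with the interpolation definitions of the spaces on $\M$ already stated in the excerpt.

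The retraction identity is the simplest piece. By construction $\Rc u=(\psi_\kappa^{\ast}(\pi_\kappa u))_\kappa$, and since $\zeta=1$ on $\supp(\psi_\kappa^{\ast}\pi_\kappa)$ by (L2), applying $\varphi_\kappa^{\ast}$ and multiplying by $\pi_\kappa$ gives $\pi_\kappa\varphi_\kappa^{\ast}\psi_\kappa^{\ast}(\pi_\kappa u)=\pi_\kappa^2 u$, so
\[
\Re\Rc u=\sum_\kappa \pi_\kappa^2\, u = u
\]
by (L1). The sum is locally finite thanks to the finite multiplicity (R1).

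For the boundedness of $\Rc$: given $u\in W^k_p(\M,V)$, the pull-back $\psi_\kappa^{\ast}$ intertwines $\nabla^j$ on $(\M,g)$ with its Euclidean analogue up to a first-order triangular system in the Christoffel symbols of $\psi_\kappa^{\ast}g$, which are $\kappa$-uniformly $C^{k-1}$-bounded by (R4); conjugating by (R2) for transitions and multiplying by $\psi_\kappa^{\ast}\pi_\kappa$ (with uniform $C^k$-control from (L3)) one obtains, via Leibniz, a pointwise bound of the form
\[
\bigl|\partial^{\alpha}\psi_\kappa^{\ast}(\pi_\kappa u)\bigr|^{p} \le c \sum_{j\le k} \psi_\kappa^{\ast}\bigl(\mathbf{1}_{\Uk}|\nabla^{j}u|_g^{p}\bigr),\qquad |\alpha|\le k,
\]
holding uniformly in $\kappa$. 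Integrating over $\Xk$ and comparing $\psi_\kappa^{\ast}dV_g$ with Lebesgue measure using (R3), then summing over $\kappa$ and invoking (R1), yields $\sum_\kappa\|\psi_\kappa^{\ast}(\pi_\kappa u)\|_{W^k_p(\Xk,E)}^{p}\le c\,(\|u\|_{k,p}^{\M})^{p}$. The argument for $\Re$ runs along symmetric lines: each summand $\pi_\kappa\varphi_\kappa^{\ast}v_\kappa$ is compactly supported in $\Uk$, the family $\{\supp\pi_\kappa\}$ has multiplicity at most $K$ by (R1), and Leibniz together with (R2)--(R4) and (L3) dominates $|\nabla^{j}(\pi_\kappa\varphi_\kappa^{\ast}v_\kappa)|_g^{p}$ pointwise by $\varphi_\kappa^{\ast}(\sum_{|\alpha|\le k}|\partial^{\alpha}v_\kappa|^{p})$ up to uniform constants, so the elementary inequality $\|\sum_\kappa f_\kappa\|_p^{p}\le K^{p-1}\sum_\kappa\|f_\kappa\|_p^{p}$ under multiplicity $K$ closes the estimate.

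The main obstacle is arranging all local estimates to be \emph{uniform in} $\kappa$. This is the combined force of the uniform-regularity axioms: (R2) handles transition maps, (R3)--(R4) handle the comparison of covariant with Euclidean derivatives (and of the volume form with Lebesgue measure), and (L3) handles the uniform $C^k$-control of the localization system. Once these constants are shown to be $\kappa$-independent, both estimates reduce to the standard multiplicity trick, and the non-integer case follows by invoking $[l_p(\boldsymbol{E}_0),l_p(\boldsymbol{E}_1)]_\theta\doteq l_p([\boldsymbol{E}_0,\boldsymbol{E}_1]_\theta)$ and its real-interpolation analogue; boundedness of $\Re,\Rc$ and the identity $\Re\circ\Rc=\id$ all pass to interpolated spaces, giving the full range $s\ge 0$.
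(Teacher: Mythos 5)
The paper offers no proof of this proposition: Section~2 explicitly states that the material is quoted from \cite{Ama13, AmaAr} ``without giving proofs,'' so the only basis for comparison is the argument in those references. Your proposal reconstructs essentially that standard argument, and it is correct in all its main points: the identity $\Re\circ\Rc=\id$ is exactly $\sum_\kappa\pi_\kappa^2u=u$ from (L1) (the appeal to $\zeta$ and (L2) is superfluous here, since $\varphi_\kappa^{\ast}\psi_\kappa^{\ast}=\id$ on $\Uk$ already); the integer case rests on the $\kappa$-uniform pointwise comparison of $\sum_{j\le k}|\nabla^j\cdot|_g$ with $\sum_{|\alpha|\le k}|\partial^\alpha(\psi_\kappa^{\ast}\cdot)|$ — precisely the estimate \eqref{important pt estimate} the paper itself quotes from \cite[Lemma~3.1]{Ama13} — combined with (R3) for the volume form, Leibniz with (L3), and the finite-multiplicity inequality $|\sum_\kappa f_\kappa|^p\le K^{p-1}\sum_\kappa|f_\kappa|^p$; and the fractional case follows by interpolating the retraction diagram. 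Two small points deserve explicit justification rather than assertion: first, since $W^k_p(\M,V)$ is defined as a completion of $\mathcal{D}(\M,V)$, you should note that the integer-order estimates extend from $\mathcal{D}$ by density and that $\Re$ indeed maps into the completion; second, the identification $[l_p(\boldsymbol{W}^k),l_p(\boldsymbol{W}^{k+1})]_\theta\doteq l_p([\boldsymbol{W}^k,\boldsymbol{W}^{k+1}]_\theta)$ (and its real-interpolation analogue with second index $p$) needs the $\kappa$-uniformity of the local interpolation constants on $\Xk\in\{\R^m,\H^m\}$, which holds but should be cited (e.g.\ \cite{Trie78}). With those references supplied, the proof is complete.
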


For $k\in{\N}_{0}$, we define
\begin{align*}
BC^{k}({\M},V):=(\{u\in{C^k({\M},V)}:\|u\|_{k,\infty}^{\M}<\infty\},\|\cdot\|_{k,\infty}^{\M}),
\end{align*}
where $\|u\|_{k,\infty}^{\M}:={\max}_{0\leq{i}\leq{k}}\||\nabla^{i}u|_{g}\|_{\infty}$.
\smallskip\\
We also set
\begin{align*}
BC^{\infty}({\M},V):=\bigcap_{k}BC^{k}({\M},V)
\end{align*}
endowed with the conventional projective topology. Then
\begin{center}
$bc^{k}(\M,V):=$ the closure of $BC^{\infty}$ in $BC^{k}$.
\end{center}
Now the H\"older spaces $BC^s(\M,V)$ of order $s$ for some $k<s<k+1$ with $k\in\N_0$ are defined by
\begin{align*}
BC^s({\M},V):=(bc^{k}({\M},V),bc^{k+1}({\M},V))_{s-k,\infty}.
\end{align*}
We define the little H\"older spaces by 
\begin{center}
$bc^{s}({\M},V):=$ the closure of $BC^{\infty}({\M},V)$ in $BC^{s}({\M},V)$.
\end{center}

We denote by 
\begin{align*}
l_{\infty,\uf}(\boldsymbol{bc}^k)
\end{align*}
the linear subspace of $l_{\infty}(\boldsymbol{BC}^k)$ of all $\boldsymbol{u}=(u_{\kappa})_{\kappa}$ such that $\partial^{\alpha}u_{\kappa}$ is uniformly continuous on $\Xk$ for $|\alpha|\leq k$, uniformly with respect to $\kappa\in\mathfrak{K}$. 
Similarly, for any $k<s<k+1$, we denote by 
\begin{align*}
l_{\infty,\uf}(\boldsymbol{bc}^s)
\end{align*}
the linear subspace of $l_{\infty,\uf}(\boldsymbol{bc}^k)$ of all $\boldsymbol{u}=(u_{\kappa})_{\kappa}$ such that 
\begin{align*}
\lim_{\delta\rightarrow 0}\max_{|\alpha|=k}[\partial^{\alpha}u_{\kappa}]^{\delta}_{s-k,\infty}=0, 
\end{align*}
uniformly with respect to $\kappa\in\mathfrak{K}$. Here the seminorm $[\cdot]^{\delta}_{s,\infty}$ for $0<s<1$ and $0<\delta\leq\infty$ is defined by
\begin{align*}
[u]^{\delta}_{s,\infty}:=\sup_{h\in(0,\delta)^m}\frac{\|u(\cdot+h)-u(\cdot)\|_{\infty}}{|h|^s}, \hspace*{1em}[\cdot]_{s,\infty}:=[\cdot]^{\infty}_{s,\infty}.
\end{align*}
\begin{prop}
\label{S2: retraction of H-LH}
$\Re$ is a retraction from $l_{b}(\boldsymbol{\F}^{s})$ onto $\F^{s}({\M},V)$ and $\Rc$ is a coretraction. Here $b=``\infty"$ for $\F=BC$, or $b=``\infty,\uf"$ for $\F=bc$.
\end{prop}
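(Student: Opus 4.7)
The proof parallels that of Proposition~\ref{S2: Retraction of SB}, but replaces $\ell_p$-summability over patches by uniform bounds. Two things must be checked: (i) the algebraic identity $\Re\circ\Rc=\id_{\F^s(\M,V)}$, and (ii) boundedness of $\Re$ and $\Rc$ between the stated spaces. The identity is immediate and purely formal: for $u\in\F^s(\M,V)$,
\begin{equation*}
\Re\circ\Rc(u)=\sum_\kappa \pk\,\kbk\bigl(\kfk(\pk u)\bigr)=\Bigl(\sum_\kappa \pk^2\Bigr)u=u,
\end{equation*}
using (L1). So it suffices to prove the two boundedness estimates.

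\textbf{Integer order.} Fix $k\in\N_0$. For $\Rc$ acting on $u\in BC^k(\M,V)$, I would expand $\kfk(\pk u)$ in local coordinates and apply the chain and Leibniz rules, using (L3) to control derivatives of $\pk$, (R3) to convert between the intrinsic tensor norm $|\cdot|_g$ and the Euclidean component norm uniformly in $\kappa$, and (R4) to control the derivatives of $\psi_\kappa^{\ast}g$ and the associated Christoffel symbols. This yields a $\kappa$-uniform bound
\begin{equation*}
\sup_\kappa\|\kfk(\pk u)\|_{BC^k(\Xk,E)}\leq c\,\|u\|_{BC^k(\M,V)}.
\end{equation*}
For $\Re$ acting on $\boldsymbol{u}\in l_\infty(\boldsymbol{BC}^k)$, each summand $\pk\,\kbk u_\kappa$ is supported in $\Uk$, and by (R1) only a bounded number of patches meet any given $\p\in\M$; the Leibniz rule combined with the same metric comparisons then delivers $\|\Re(\boldsymbol{u})\|_{BC^k(\M,V)}\leq c\sup_\kappa\|u_\kappa\|_{BC^k}$.

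\textbf{Fractional order and the little Hölder case.} For $k<s<k+1$ and $\F=BC$, I would derive the Hölder-seminorm estimate directly. The uniform $BC^\infty$-boundedness of the transition maps $\varphi_\eta\circ\psi_\kappa$ from (R2), together with (L3), implies that products and compositions with $\pk$, $\kbk$, and the transition data preserve the class $BC^s$ with uniform bounds. Combined with the finite-multiplicity decomposition from (R1), this controls the Hölder seminorm on $\M$ in terms of the supremum of Euclidean Hölder seminorms on the patches, and vice versa. As a shortcut one can alternatively invoke real interpolation of the integer estimates at orders $k$ and $k+1$. For the little Hölder case $\F=bc$, I would argue that both $bc^s(\M,V)$ and $l_{\infty,\uf}(\boldsymbol{bc}^s)$ are characterized by vanishing of the Hölder modulus of each $k$-th derivative (uniformly in $\kappa$ on the product side), and that these characterizations are preserved by the bounded maps $\Re$ and $\Rc$ already constructed, since these maps intertwine $BC^\infty$-approximations.

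\textbf{Main obstacle.} The principal technical difficulty is the translation between the intrinsic Hölder seminorm on $(\M,g)$ and the Euclidean Hölder seminorms on the charts when two nearby points of $\M$ fall into overlapping but distinct patches, so that differences must be tracked across transition maps. This is overcome by the uniform shrinkability together with (R1)--(R2): any such comparison reduces to a bounded number of Euclidean estimates that are preserved under $\varphi_\eta\circ\psi_\kappa$ with constants independent of $(\kappa,\eta)$. Boundary charts, where $\Qk=\Q\cap\H^m$, require no separate argument beyond replacing $\R^m$ by $\H^m$ in the local estimates.
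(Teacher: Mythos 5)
The paper offers no proof of this proposition: as announced at the beginning of Section~2, these retraction results are quoted from Amann's work \cite{Ama13, AmaAr}, so there is no in-paper argument to compare against. Your sketch follows the standard localization argument that underlies the cited proof and is essentially sound. The identity $\Re\circ\Rc(u)=\sum_\kappa\pk^2u=u$ is correct, and it is worth noting explicitly that the sum defining $\Re$ is locally finite by the finite multiplicity in (R1), so $\Re$ is well defined on all of $l_\infty(\boldsymbol{BC}^k)$. The integer-order bounds via the pointwise comparison of $\sum_i|\nabla^iu|_g$ with the Euclidean derivatives of the components, using (R2)--(R4) and (L3), are exactly the estimates the paper itself invokes later in the proof of Lemma~\ref{main lemma 3.1} via \cite[Lemma~3.1(iv)]{Ama13}.

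One caveat concerns your proposed interpolation ``shortcut'' at fractional order. Interpolating the integer estimates does yield the coretraction direction, since $(bc^{k}(\M,V),bc^{k+1}(\M,V))_{s-k,\infty}$ is by definition $BC^{s}(\M,V)$ and the resulting interpolation space of the sequence spaces embeds into $l_\infty(\boldsymbol{BC}^{s})$. It does not, however, by itself give the retraction direction $\Re\in\L(l_\infty(\boldsymbol{BC}^{s}),BC^{s}(\M,V))$: real interpolation does not commute with $l_\infty$-products, so $(l_{\infty,\uf}(\boldsymbol{bc}^{k}),l_{\infty,\uf}(\boldsymbol{bc}^{k+1}))_{s-k,\infty}$ may be strictly smaller than $l_\infty(\boldsymbol{BC}^{s})$, and a priori $\Re$ is only controlled on the former. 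This is precisely why the direct H\"older-seminorm estimate --- which you do present as your primary route --- is genuinely needed there, and why the uniform little-H\"older spaces $l_{\infty,\uf}(\boldsymbol{bc}^{s})$ enter in the case $\F=bc$. With that point made explicit, your outline is a faithful reconstruction of the omitted proof.
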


In the following proposition, $(\cdot,\cdot)^0_{\theta,\infty}$ is the continuous interpolation method, see \cite[Example~I.2.4.4]{Ama95} and \cite[Definition~1.2.2]{Lunar95}.
\begin{prop}
\label{interpolation result}
Suppose that $0<\theta<1$, $0\leq{s}_0<s_1$ and $s=(1-\theta)s_0+\theta{s_1}$. Let $\F\in\{W_p,H_p\}$ and $\mathfrak{B}\in\{bc,BC\}$. Then for $s\notin\N$ except in (b)
\begin{enumerate}
\item[(a)] $({\F}^{s_0}({\M},V),{\F}^{s_1}({\M},V))_{\theta,p}\doteq W^{s}_p({\M},V) $.
\item[(b)] $[{\F}^{s_0}({\M},V),{\F}^{s_1}({\M},V)]_{\theta}\doteq H^{s}_p({\M},V)$, \hspace{.5em}$s_0,s_1\in{\N}_0$ when $\F=W_p$.
\item[(c)] $(\mathfrak{B}^{s_0}({\M},V),\mathfrak{B}^{s_1}({\M},V))_{\theta,\infty}\doteq BC^{s}({\M},V)$, \hspace{.5em} $s_0,s_1\notin\N_0$ when $\mathfrak{B}=BC$, or $s_0,s_1\in\N_0$ when $\mathfrak{B}=bc$.
\item[(d)] $(bc^{s_0}({\M},V),bc^{s_1}({\M},V))_{\theta,\infty}^0\doteq bc^{s}({\M},V)$, \hspace{.5em} $s_0,s_1\notin\N_0$, or $s_0,s_1\in\N_0$.
\end{enumerate}
\end{prop}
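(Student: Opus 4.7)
The plan is to reduce each of (a)--(d) to the corresponding classical interpolation identity on the Euclidean model spaces $\R^m$ and $\H^m$ via the retraction/coretraction pair $(\Re,\Rc)$ furnished by Propositions~\ref{S2: Retraction of SB} and \ref{S2: retraction of H-LH}. The underlying principle I will invoke is the classical fact that every admissible interpolation functor $\mathcal{F}_\theta$ commutes with retractions: if $r:X\to Y$ is a retraction with coretraction $e:Y\to X$, then the restriction of $r$ to $\mathcal{F}_\theta(X_0,X_1)$ is a retraction onto $\mathcal{F}_\theta(Y_0,Y_1)$ with $e$ as coretraction; see \cite[Example~I.2.4.2]{Ama95}. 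Applying this to the appropriate functor for each part --- $(\cdot,\cdot)_{\theta,p}$ for (a), $[\cdot,\cdot]_\theta$ for (b), $(\cdot,\cdot)_{\theta,\infty}$ for (c), and $(\cdot,\cdot)_{\theta,\infty}^0$ for (d) --- reduces the proposition to identifying the interpolation space of the sequence-valued couples $(l_q(\boldsymbol{\F}^{s_0}),l_q(\boldsymbol{\F}^{s_1}))$ with the expected $l_q(\boldsymbol{G}^{s})$ on the model side.

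Next, I would commute interpolation with the $l_q$-sum. For $1\le q<\infty$ and couples $(\mathbf{E}_0,\mathbf{E}_1)$ with interpolation constants uniform in the index $\kappa$ one has
\[
(l_q(\mathbf{E}_0),l_q(\mathbf{E}_1))_{\theta,q}\doteq l_q\bigl((\mathbf{E}_0,\mathbf{E}_1)_{\theta,q}\bigr),\qquad [l_p(\mathbf{E}_0),l_p(\mathbf{E}_1)]_\theta\doteq l_p\bigl([\mathbf{E}_0,\mathbf{E}_1]_\theta\bigr).
\]
The uniformity of the interpolation constants is automatic here since the fibres $\F^{s_j}(\Xk,E)$ depend on $\kappa$ only through the dichotomy $\Xk\in\{\R^m,\H^m\}$. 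Combined with the classical Euclidean interpolation identities (see \cite[Section~1.3]{Trie78})
\[
(W^{s_0}_p(\Xk,E),W^{s_1}_p(\Xk,E))_{\theta,p}\doteq W^s_p(\Xk,E),\qquad [H^{s_0}_p(\Xk,E),H^{s_1}_p(\Xk,E)]_\theta\doteq H^s_p(\Xk,E),
\]
this yields (a) and (b); the restriction $s\notin\N$ in (a) is what keeps the real interpolant inside the Sobolev-Slobodeckii scale rather than a Besov space with a different fine index, while the restriction $s_0,s_1\in\N_0$ in (b) for $\F=W_p$ matches the very definition of the Bessel-potential scale used in Section~2.

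For (c) and (d) I switch to Proposition~\ref{S2: retraction of H-LH}. Part (c) follows by the same scheme once the Euclidean identity $(\mathfrak{B}^{s_0}(\Xk,E),\mathfrak{B}^{s_1}(\Xk,E))_{\theta,\infty}\doteq BC^s(\Xk,E)$ is combined with the commutation of $(\cdot,\cdot)_{\theta,\infty}$ with $l_\infty$, whose proof is a routine adaptation of the $l_p$ case. Part (d) is the delicate one and is where I expect the main obstacle: the continuous interpolation functor interacts poorly with the ambient $l_\infty$, so one must work inside the smaller space $l_{\infty,\uf}(\boldsymbol{bc}^s)$ introduced before Proposition~\ref{S2: retraction of H-LH}. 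The key verification is that $\Rc$ actually maps $bc^s(\M,V)$ into $l_{\infty,\uf}(\boldsymbol{bc}^s)$, not merely into $l_\infty(\boldsymbol{BC}^s)$; this relies on the uniform bounds on $\pi_\kappa$ and on the chart transitions from (R2) and (L3), which force the cut-off-and-pullback operation $u\mapsto(\psi_\kappa^{\ast}(\pi_\kappa u))_\kappa$ to preserve the uniform equicontinuity of the derivatives up to order $k$ as well as the uniform modulus of order $s-k$. Once that step is in place, the Euclidean identity $(bc^{s_0}(\Xk,E),bc^{s_1}(\Xk,E))_{\theta,\infty}^0\doteq bc^s(\Xk,E)$ together with the commutation $(l_{\infty,\uf}(\mathbf{E}_0),l_{\infty,\uf}(\mathbf{E}_1))_{\theta,\infty}^0\doteq l_{\infty,\uf}((\mathbf{E}_0,\mathbf{E}_1)_{\theta,\infty}^0)$ closes the argument.
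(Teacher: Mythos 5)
The paper offers no proof of this proposition --- it is quoted from Amann's work \cite{Ama13,AmaAr}, and the argument there is exactly the one you outline: transfer to the model spaces via the retraction--coretraction pair $(\Re,\Rc)$, commutation of each interpolation functor with the corresponding $l_q$-sum (with constants uniform in $\kappa$ because the fibres only see the dichotomy $\Xk\in\{\R^m,\H^m\}$), and the classical Euclidean interpolation identities. Your outline is therefore correct and follows essentially the same route as the cited source; note only that the key verification you flag in part (d), namely that $\Rc$ maps $bc^{s}(\M,V)$ into $l_{\infty,\uf}(\boldsymbol{bc}^{s})$ rather than merely into $l_{\infty}(\boldsymbol{BC}^{s})$, is already supplied by Proposition~\ref{S2: retraction of H-LH}.
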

\begin{prop}
\label{embedding theory}
The following embedding results hold for function spaces over ${\M}$. For $t>s\geq{0}$
\begin{itemize}
\item[(a)] ${\F}^{t}({\M},V)\overset{d}{\hookrightarrow}{\F}^{s}({\M},V)$, where ${\F}\in \{W_{p},H_{p},bc \}$.
\item[(b)] ${\F}^{t}({\M},V)\overset{d}{\hookrightarrow}{\mathfrak{B}}^{s}({\M},V)$, where $\{\F,\mathfrak{B}\}\in \{(BC,bc),(H_p,W_p),(W_p,H_p)\}$.
\end{itemize}
\end{prop}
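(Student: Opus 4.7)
The unifying strategy is to factor each claimed embedding through the sequence-space models of Section~2. For every pair $(\F^t(\M,V),\mathfrak{B}^s(\M,V))$ appearing in the statement, Propositions~\ref{S2: Retraction of SB} and~\ref{S2: retraction of H-LH} furnish a common index $q\in\{p,\infty,(\infty,\uf)\}$ for which both $\Re\colon l_q(\boldsymbol{\F}^t)\to\F^t(\M,V)$ and $\Re\colon l_q(\boldsymbol{\mathfrak{B}}^s)\to\mathfrak{B}^s(\M,V)$ are retractions with coretractions $\Rc$. Using $\Re\circ\Rc=\id$ on the source space, the desired embedding reduces to the componentwise inclusion $\iota\colon l_q(\boldsymbol{\F}^t)\hookrightarrow l_q(\boldsymbol{\mathfrak{B}}^s)$ sandwiched between the bounded coretraction on the source and the bounded retraction on the target.

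The main technical step is thus to check that, for each $\kappa\in\mathfrak{K}$, the Euclidean model space $\F^t(\Xk,E)$ embeds continuously into $\mathfrak{B}^s(\Xk,E)$ with a constant independent of $\kappa$. For part~(a) these are the standard Sobolev embedding $W^t_p\hookrightarrow W^s_p$, its Bessel analogue $H^t_p\hookrightarrow H^s_p$, and the little H\"older embedding $bc^t\hookrightarrow bc^s$ on $\R^m$ or $\H^m$, whose constants depend only on $m$, $p$, $t$ and $s$. For part~(b) one combines the mollification argument giving $BC^t\hookrightarrow bc^s$ with the Besov--Bessel comparison $H^t_p\hookrightarrow W^s_p$ and $W^t_p\hookrightarrow H^s_p$ for $t>s$; both of the latter can be derived either directly on Euclidean space or by invoking Proposition~\ref{interpolation result} together with the coincidence $W^k_p=H^k_p$ at integer orders. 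Summing (or taking suprema) over $\kappa\in\mathfrak{K}$ then yields boundedness of $\iota$.

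For the density assertion in $\overset{d}{\hookrightarrow}$, I would split into two cases. When the target is Sobolev or Bessel-potential, $\mathcal{D}(\M,V)$ lies in $\F^t(\M,V)$ and is dense in $\mathfrak{B}^s(\M,V)$ by the very definition of $W^k_p(\M,V)$ together with Proposition~\ref{interpolation result}. When the target is a little H\"older space $bc^s(\M,V)$, density is built in: $BC^\infty(\M,V)$ is dense in $bc^s(\M,V)$ by construction, and $BC^\infty(\M,V)\subset\F^t(\M,V)$ on the source side.

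The most delicate point is the uniformity in $\kappa$ of the componentwise Euclidean embeddings, but this is immediate because the classical Euclidean results depend only on $m$, $p$, and the differentiability indices, while the bounds on the transition maps in (R1)--(R4) and on the partition-of-unity data $(\pk,\zeta_\kappa)$ in (L1)--(L3) are already uniform. Any remaining friction sits in the mixed pair $(H_p,W_p)$ and $(W_p,H_p)$ at non-integer orders, where the cleanest remedy is to reiterate through a pair of integer orders $k<s<t<k'$, exploit $W^k_p=H^k_p$ and $W^{k'}_p=H^{k'}_p$, apply Proposition~\ref{interpolation result}, and then transport the embedding back to $\M$ through the retraction-coretraction diagram.
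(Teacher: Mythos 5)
The paper does not actually prove Proposition~\ref{embedding theory}: it is quoted without proof from the function-space theory of Amann (\cite{Ama13}, \cite{AmaAr}), as announced at the start of Section~2. Your proposal reconstructs what is essentially the argument used in those references, so there is no competing proof in the paper to compare against; judged on its own, the strategy is sound. Two points deserve a little more care than you give them. First, your phrase ``a common index $q$'' is slightly off for the pair $(BC,bc)$ in part~(b): the source retraction lives on $l_{\infty}(\boldsymbol{BC}^{t})$ while the target retraction lives on $l_{\infty,\uf}(\boldsymbol{bc}^{s})$, so the componentwise map you need is $l_{\infty}(\boldsymbol{BC}^{t})\to l_{\infty,\uf}(\boldsymbol{bc}^{s})$; this forces you to verify not just the Euclidean inclusion $BC^{t}\hookrightarrow bc^{s}$ on each $\Xk$ but also the \emph{uniformity in} $\kappa$ of the moduli of continuity and of the decay $[\partial^{\alpha}u_{\kappa}]^{\delta}_{s-k,\infty}\to 0$ entering the definition of $l_{\infty,\uf}$ --- this does follow from the uniform bound $\sup_{\kappa}\|u_{\kappa}\|_{t,\infty}<\infty$ via the elementary estimate $[\,\cdot\,]^{\delta}_{s-k,\infty}\le \delta^{\,t'-s}[\,\cdot\,]_{t'-k,\infty}$, but it should be said explicitly since it is exactly where the ``$\uf$'' decoration matters. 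Second, you are right to treat density separately and directly: a retraction--coretraction diagram transports boundedness but not density of ranges, so the reduction to $\mathcal{D}(\M,V)$ (for the $W_p$/$H_p$ targets, using that $X_0\cap X_1$ is dense in the interpolation spaces) and to $BC^{\infty}(\M,V)$ (for the $bc^{s}$ targets, where density is built into the definition) is the correct and necessary supplement. With these two points made explicit, the proof is complete and matches the standard localization argument of the cited literature.
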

Assume that $V_j=V^{\sigma_j}_{\tau_j}:=\{T^{\sigma_j}_{\tau_j}\M,(\cdot|\cdot)_g\}$ with $j=1,2,3$ are $\K$-valued tensor bundles on $\M$. By bundle multiplication from $V_1\times V_2$ into $V_3$, denoted by
\begin{align*}
{\mathsf{m}}: V_1\times V_2\rightarrow V_3,\hspace{1em} (v_1,v_2)\mapsto {\mathsf{m}}(v_1,v_2),
\end{align*}
we mean a smooth bounded section $\mathfrak{m}$ of Hom$(V_1\otimes V_2,V_3)$, i.e.,
\begin{align}
\label{section 2: bundle multiplication}
\mathfrak{m}\in BC^{\infty}(\M, \text{Hom}(V_1\otimes V_2,V_3)), 
\end{align}
such that $\mathsf{m}(v_1,v_2):=\mathfrak{m}(v_1\otimes v_2)$. Its point-wise extension from $\Gamma(\M,V_1\oplus V_2)$ into $\Gamma(\M,V_3)$ is defined by:
\begin{align*}
\mathsf{m}(v_1,v_2)(\p):=\mathsf{m}(\p)(v_1(\p),v_2(\p))
\end{align*}
for $v_i\in \Gamma(\M,V_i)$ and $\p \in\M$. We still denote it by ${\mathsf{m}}$. 
\begin{prop}
\label{pointwise multiplication properties}
Let $V_j=V^{\sigma_j}_{\tau_j}:=\{T^{\sigma_j}_{\tau_j}\M,(\cdot|\cdot)_g\}$ with $j=1,2,3$ be tensor bundles. Suppose that $\mathsf{m}:V_1\times V_2\rightarrow V_3$ is a bundle multiplication. Then $[(v_1,v_2)\mapsto \mathsf{m}(v_1,v_2)]$ is a bilinear and continuous map for the following spaces:
\begin{itemize}
\item[(a)] $BC^{t}({\M},V_1)\times{\F}^{s}({\M},V_2)\rightarrow{\F}^{s}({\M},V_3)$, where $t>s\geq{0}$, ${\F}\in\{W_p,H_p\}$.
\item[(b)] $BC^{k}({\M},V_1)\times W^{k}_p({\M},V_2)\rightarrow W^{k}_p({\M},V_3)$, where $k\in{\N}_0$.
\item[(c)] ${\F}^{s}({\M},V_1)\times{\F}^{s}({\M},V_2)\rightarrow{\F}^{s}({\M},V_3)$, where $s\geq 0$, ${\F}\in\{bc,BC\}$.
\end{itemize}
\end{prop}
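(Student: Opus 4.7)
The plan is to reduce all three continuity assertions to the corresponding classical pointwise multiplication theorems on $\R^{m}$ (and $\H^{m}$) via the retraction/coretraction pair $(\Re,\Rc)$ furnished by Propositions~\ref{S2: Retraction of SB} and~\ref{S2: retraction of H-LH}. The extra content compared to the scalar case is the bundle homomorphism $\mathfrak{m}$, but this is harmless: assumption~\eqref{section 2: bundle multiplication} together with the uniform regularity conditions (R1)--(R4) ensures that, in every local chart $\psi_{\kappa}$, the local coefficients of $\mathfrak{m}$ obey $BC^{k}(\Xk)$-bounds that are uniform in $\kappa$ for every $k\in\N_{0}$.

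\textbf{Localization.} Using (L2), i.e. $\zeta_{\kappa}\equiv 1$ on $\supp\pk$, together with the fact that $\pk$ is a scalar, I would first record the fiberwise identity $\pk\,\mathsf{m}(v_{1},v_{2})=\mathsf{m}(\pk v_{1},\zeta_{\kappa}v_{2})$. Pulling back by $\psi_{\kappa}$ then gives the chart-level formula
\begin{align*}
\Rck\,\mathsf{m}(v_{1},v_{2})=M_{\kappa}\bigl(\Rck v_{1},\;\psi_{\kappa}^{\ast}(\zeta_{\kappa}v_{2})\bigr),
\end{align*}
where $M_{\kappa}$ is a pointwise bilinear operation whose coefficients are entries of $\psi_{\kappa}^{\ast}\mathfrak{m}$, hence bounded in $BC^{k}(\Xk)$ uniformly in $\kappa$ for every $k$.

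\textbf{Euclidean estimates and reassembly.} Applying the classical scalar multiplication results on $\R^{m}$ (or $\H^{m}$) to $M_{\kappa}$ yields the chart-wise versions of (a), (b) and (c) with constants independent of $\kappa$; for (c) this uses that $BC^{s}$ and $bc^{s}$ are Banach algebras. To convert these into a global estimate via the norm equivalences of Propositions~\ref{S2: Retraction of SB}--\ref{S2: retraction of H-LH}, I still need to bound $\|\psi_{\kappa}^{\ast}(\zeta_{\kappa}v_{2})\|_{\F^{s}(\Xk)}$ in terms of the components of $\Rc v_{2}$. For this I would use $\sum_{\eta}\pi_{\eta}^{2}\equiv 1$ from (L1) and the finite-multiplicity bound $\#\mathfrak{N}(\kappa)\leq K$ from (R1) to write
\begin{align*}
\zeta_{\kappa}v_{2}=\sum_{\eta\in\mathfrak{N}(\kappa)}\zeta_{\kappa}\pi_{\eta}^{2}v_{2},
\end{align*}
pull each summand back to $\Xk$, transport it to $\mathbb{X}_{\eta}$ through the transition map $\varphi_{\eta}\circ\psi_{\kappa}$ (whose derivatives are uniformly bounded by (R2)), and invoke (L3) for the cutoff factor $\psi_{\kappa}^{\ast}(\zeta_{\kappa}\pi_{\eta})$. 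This produces
\begin{align*}
\|\psi_{\kappa}^{\ast}(\zeta_{\kappa}v_{2})\|_{\F^{s}(\Xk)}\leq C\sum_{\eta\in\mathfrak{N}(\kappa)}\|\mathcal{R}^{c}_{\eta}v_{2}\|_{\F^{s}(\mathbb{X}_{\eta})}
\end{align*}
with $C$ independent of $\kappa$. Summing or taking supremum in $\kappa$ — using Young's inequality on the counting measure of $\mathfrak{K}$ in the $l_{p}$-case and the uniform bound $\#\mathfrak{N}(\kappa)\leq K$ in the $l_{\infty}$-case — and reading off the resulting $l_{p}(\boldsymbol{\F}^{s})$- or $l_{\infty,\uf}(\boldsymbol{\F}^{s})$-estimate through Propositions~\ref{S2: Retraction of SB}--\ref{S2: retraction of H-LH} delivers the desired global bilinear continuity.

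\textbf{Main obstacle.} The principal technical hurdle is exactly the asymmetry just described: the retraction is built from $\pk$, but distributing $\pk$ through the bilinear $\mathsf{m}$ forces the thicker cutoff $\zeta_{\kappa}$ onto the second factor, and converting that factor back into information about $\Rc v_{2}$ is where the full package of uniform regularity (finite multiplicity, uniformly $BC^{\infty}$-bounded partitions of unity, uniformly $BC^{\infty}$-bounded transition maps) is indispensable. Once this conversion is in hand, (b) follows from Leibniz's rule applied chart-wise, while the non-integer cases of (a) and (c) can, if one prefers, be deduced from the integer cases by interpolation through Proposition~\ref{interpolation result}.
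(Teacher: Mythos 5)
The paper contains no proof of Proposition~\ref{pointwise multiplication properties}: it is one of the results quoted from \cite{Ama13, AmaAr}, and Section~2 explicitly defers all proofs to those references. Judged on its own, your reconstruction is the standard localization argument for this framework and I see no fatal gap. The identity $\pk\,\mathsf{m}(v_1,v_2)=\mathsf{m}(\pk v_1,\zeta_\kappa v_2)$ is correct (bilinearity of $\mathsf m$ in each fiber plus $\zeta_\kappa\equiv 1$ on $\supp\pk$); the uniform $BC^{k}$-bounds on the local coefficients of $\mathfrak m$ do hold, because $\Hom(V_1\otimes V_2,V_3)$ is itself a tensor bundle so the point-wise estimate \eqref{important pt estimate} applies to $\mathfrak m\in BC^{\infty}(\M,\Hom(V_1\otimes V_2,V_3))$; and the neighbor-sum estimate for $\|\kfk(\zeta_\kappa v_2)\|_{\F^{s}(\Xk)}$ combined with the uniform bound on $\#\mathfrak N(\kappa)$ is the right way to close the $l_p$ (resp.\ $l_\infty$) bookkeeping before invoking Propositions~\ref{S2: Retraction of SB} and \ref{S2: retraction of H-LH}.

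Three remarks. First, the ``main obstacle'' you identify is largely self-inflicted in (a) and (b): if you instead put the partition cutoff on the factor whose norm is being reassembled and the fat cutoff on the bounded factor, i.e.\ $\pk\,\mathsf{m}(v_1,v_2)=\mathsf{m}(\zeta_\kappa v_1,\pk v_2)$, then $\Rck\mathsf{m}(v_1,v_2)=M_\kappa\bigl(\kfk(\zeta_\kappa v_1),\Rck v_2\bigr)$, and all you need is the uniform bound $\sup_\kappa\|\kfk(\zeta_\kappa v_1)\|_{BC^{t}(\Xk)}\leq c\,\|v_1\|^{\M}_{t,\infty}$, which follows from \eqref{important pt estimate} and (L3) with no transport between charts; the neighbor-sum conversion is then only needed where both factors carry sup-type norms, where it is trivial. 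Second, the closing claim that the non-integer cases of (a) ``can be deduced from the integer cases by interpolation'' does not work in the range $\lfloor s\rfloor<t<\lceil s\rceil$, since the upper endpoint $BC^{t}\times W^{\lceil s\rceil}_p\to W^{\lceil s\rceil}_p$ is then unavailable; for (a) you must go through the classical Euclidean multiplier theorems for fractional $s$ directly, as your primary route in fact does. Third, for $\F=bc$ in (c) the reassembly requires landing in $l_{\infty,\uf}(\boldsymbol{bc}^{s})$, i.e.\ the little-H\"older moduli of the chart-wise products must vanish uniformly in $\kappa$; this does follow from your uniform bounds, but it is an extra condition beyond ``constants independent of $\kappa$'' and deserves a sentence.
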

\medskip

\section{\bf Parameter-Dependent Diffeomorphism}

This section is designated to develop a family of parameter-dependent diffeomorphisms $\{\ttl:(\lambda,\mu)\in\B\}$ acting on tensor fields and differential operators. This family is induced by a truncated translation, and $\mu$ and $\lambda$ denote translation in time and space, respectively. In \cite{EscPruSim03}, J.~Escher, J.~Pr\"uss and G.~Simonett introduce a parameter-dependent technique to study regularity of solutions to parabolic and elliptic equations in Euclidean spaces. An important observation is that the results therein extend well to $E$-valued function spaces. We will employ this technique to establish the family $\ttl$ on a uniformly regular Riemannian manifold. The applications in later sections will prove it a very beneficial tool in the analysis of regularity of solutions to parabolic differential equations on manifolds, especially in proving analyticity of solutions.

\subsection{\bf Definition and Basic Properties}
Suppose that $(\M,g)$ is a uniformly regular Riemannian manifold equipped with a uniformly regular atlas $\mathfrak{A}$. Given any point $\p \in\mathring{\M}$, there is a local chart $(\Ukp,\varphi_{\kp})\in\mathfrak{A}$ containing $\p$. Let $\x:=\varphi_{\kp}(p)$ and $d:={\rm{dist}}(\x,\partial \mathbb{B}^m_{\kp})$.
Henceforth, $\mathbb{B}(x,r)$ always denotes the ball with radius $r$ centered at $x$ in ${\R}^n$. The dimension $n$ of the ball is not distinguished as long as it is clear from the context. We construct a new local patch $(\Uki,\varphi_{\iota})$ around $\p$ such that $\Uki:=\psi_{\kp}(\mathbb{B}(\x,d))$, and $\varphi_{\iota}(q):=\frac{\varphi_{\kp}(q)-\x}{d}$ for $q\in\Ukp$. Then $\varphi_{\iota}(\p)=0\in\R^m$, $\varphi_{\iota}(\Uki)=\Q$. The transition maps between $(\Uki,\varphi_{\iota}) $ and $(\Ukp,\varphi_{\kp})$ satisfy
\begin{align}
\label{S3: generic transition}
\varphi_{\iota}\circ\psi_{\kp}\in C^{\omega}\cap BC^{\infty}(\mathbb{B}(\x,d),\Q),\hspace{.5em}\varphi_{\kp}\circ\psi_{\iota}\in C^{\omega}\cap BC^{\infty}(\Q,\mathbb{B}(\x,d)).
\end{align}
More precisely, $\varphi_{\iota}\circ\psi_{\kp}(y)=\frac{y-\x}{d}$, $\varphi_{\kp}\circ\psi_{\iota}(x)=xd+\x$ with $x\in\Q, y\in \mathbb{B}(\x,d)$. 

Note that the atlas $\tilde{\mathfrak{A}}:=(\mathsf{O}_{\tilde{\kappa}},\varphi_{\tilde{\kappa}})_{\tilde{\kappa}\in\tilde{\mathfrak{K}}}:=\mathfrak{A}\cup(\Uki,\varphi_{\iota})$  remains uniformly regular. Choose $\varepsilon_0>0$ small such that $5\varepsilon_0<1$ and set
\begin{align*}
B_i:=\mathbb{B}^{m}(0,i\varepsilon_{0}),\hspace*{1em}\text{ for }i=1,2,...,5.
\end{align*}
We may assume that $\zeta|_{B_5}\equiv 1$. Choose two cut-off functions on ${\Qi}$:
\begin{itemize}
\item  $\chi\in{\mathcal{D}(B_{2},[0,1])}$ such that $\chi|_{\bar{B}_1}\equiv 1$. We write $\chi_{\iota}={\kb}\chi$.
\item  ${\Xo}\in\mathcal{D}(B_5,[0,1])$ such that ${\Xo}|_{\bar{B}_4}\equiv{1}$. We write ${\Xt}={\kb}{\Xo}$.
\end{itemize}
Define a rescaled translation on ${\Qi}$ for ${\mu}\in{\B}\subset{\R}^m$ with $r$ sufficiently small:
\begin{align*}
\theta_{\mu}(x):=x+\chi{(x)}\mu,\hspace{1em} x\in{\Qi}.
\end{align*}
Some properties of $\theta_{\mu}$ are listed below without giving proofs. The reader may find more details in \cite[Section~2]{EscPruSim03}. For sufficiently small $r>0$ and any $\mu,\mu_0\in{\B}$,
\begin{itemize}
\item[(T1)] $\theta_{\mu}(B_3)\subset B_3$, and $\theta_{\mu}(\bar{B}_3)\subset \bar{B}_3$.
\item[(T2)] $|\theta_{\mu}(x)-\theta_{\mu_{0}}(y)|\leq{\gamma|x-y|+|\mu-{\mu_{0}}|}$,\hspace{.5em} for some $\gamma>1$, $\forall\hspace{.2em}x,y\in{\Qi}$.
\item[(T3)] $\theta_{\mu}\in{{\Dfi}(U)}$,\hspace{.5em} for any open subset ${U}$ containing $\bar{B}_3$.
\end{itemize}
The truncated shift $\theta_{\mu}$ induces a transformation $\Theta_{\mu}$ on ${\M}$ by:
\begin{align*}
\Theta_{\mu}(q)=
\begin{cases}
\psi_{\iota}(\theta_{\mu}(\varphi_{\iota}(q))) \hspace{1em}&q\in{\Uki},\\
q &q\notin{\Uki}.
\end{cases}
\end{align*}
Based on (T3), it is evident that $\Theta_{\mu}\in{\Dfi}({\M})$ for $\mu\in{\B}$ with sufficiently small $r$. We can find an explicit global expression for ${\ttm}$. 
Given $u\in \Gamma({\M},V)$,
\begin{align*}
{\ttm}u={\kb}{\tm}{\kf}(\Xt u)+(1_{\M}-{\Xt})u.
\end{align*}
Likewise, we can express ${\ttmi}$ by
\begin{align*}
{\ttmi}={\kb}{\theta^{\mu}_{\ast}}{\kf}(\Xt u)+(1_{\M}-{\Xt})u.
\end{align*}
Assuming that ${\F}\in\{bc,BC,W_p,H_p\}$, we define a subspace of ${\F}^{s}(\M,V)$ by
\begin{align*}
{\F}_{\cp}^{s,\M}={\F}_{\cp}^{s}(\M,V):=\{u\in{{\F}^{s}(\M,V)}: \supp(u)\subset{\psi_{\iota}(\bar{B}_4)}\}. 
\end{align*}
In the case ${\F}\in\{W_p,H_p\}$, it is understood that $u$ has a representative satisfying the given condition. Note that ${\F}_{\cp}^{s,\M}$ is closed in ${\F}^{s}(\M,V)$. Hence, ${\F}_{\cp}^{s,\M}$ is a Banach space endowed with the induced norm from ${\F}^{s}(\M,V)$. The Banach spaces 
\begin{align*}
{\F}_{\cp}^{s}={\F}_{\cp}^{s}({\R}^m,E):=\{u\in{{\F}^{s}({\R}^m,E)}: \supp(u)\subset{\bar{B}_4}\} 
\end{align*}
are defined alike. The following lemma enables us to transfer the properties of ${\tm}$ to the transformation ${\ttm}$, and hence plays a key role in the sequel. 
\goodbreak
\begin{lem}
\label{main lemma 3.1}
Let ${\F}\in \{bc,BC,W_p,H_p\}$. Then 
\begin{align*}
{\kb}\in{{\Lis}({\F}_{\cp}^{s},{\F}_{\cp}^{s,\M})},\hspace{.5em}\text{ with }\hspace{.5em}[{\kb}]^{-1}={\kf}.
\end{align*}
\end{lem}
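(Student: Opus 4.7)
The plan is to treat the bijectivity as a direct algebraic identity and then derive the norm estimates via the retraction machinery of Section~2 combined with finite-multiplicity and transition-map control.

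\emph{Step 1: Bijectivity.}  Given $u\in\F^{s}_{\cp}$, the tensor field $\kb u$ has chart-$\iota$ components $u\circ \varphi_{\iota}$ on $\Uki$ and vanishes outside $\psi_{\iota}(\bar{B}_{4})\subset \Uki$. The identities $\varphi_{\iota}\circ \psi_{\iota}=\id_{\Qi}$ and $\psi_{\iota}\circ \varphi_{\iota}=\id_{\Uki}$ then force $\kf\kb=\id$ on $\F^{s}_{\cp}$ and $\kb\kf=\id$ on $\F^{s,\M}_{\cp}$. So $\kb$ and $\kf$ are inverse bijections and only continuity requires real work.

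\emph{Step 2: Localization of the $\M$-norm.}  I would apply Proposition~\ref{S2: Retraction of SB} (for $\F\in\{W_{p},H_{p}\}$) or Proposition~\ref{S2: retraction of H-LH} (for $\F\in\{bc,BC\}$) to obtain
\begin{align*}
\|\kb u\|^{\M}_{\F^{s}}\sim \|\Rc(\kb u)\|_{l_{q}(\boldsymbol{\F}^{s})},\qquad q\in\{p,\infty\}.
\end{align*}
Because $\supp(\kb u)\subset \psi_{\iota}(\bar{B}_{4})$, only those $\kappa$ with $\Uk\cap \Uki\ne\emptyset$ contribute. The augmented atlas $\tilde{\mathfrak{A}}:=\mathfrak{A}\cup\{(\Uki,\varphi_{\iota})\}$ is still uniformly regular, so by finite multiplicity only finitely many such $\kappa$ occur, with a bound independent of $u$.

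\emph{Step 3: Bounding each component via transition maps.}  For each contributing $\kappa$, the chart-$\kappa$ components of $\kb u$ are obtained from the chart-$\iota$ components $u\circ\varphi_{\iota}$ by linear combinations whose coefficients are products of entries of the Jacobians of $\varphi_{\iota}\circ \psi_{\kappa}$ and $\varphi_{\kappa}\circ \psi_{\iota}$ (one factor per tensor slot). By \eqref{S3: generic transition} together with (R2) applied to $\tilde{\mathfrak{A}}$, all these Jacobian entries, as well as $\psi_{\kappa}^{\ast}\pi_{\kappa}$ by (L3), belong to $BC^{\infty}$ with bounds uniform in $\kappa$. Hence $\Rck(\kb u)$ is, on $\Qk$, a finite sum of uniformly $BC^{\infty}$ multipliers applied to $u\circ(\varphi_{\iota}\circ \psi_{\kappa})$. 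Proposition~\ref{pointwise multiplication properties} controls the multiplication part, and the well-known invariance of $\F^{s}(\R^{m},E)$ under composition with a $BC^{\infty}$-diffeomorphism having bounded-derivative inverse (verified at integer orders by the chain rule and carried to arbitrary $s\ge 0$ via the interpolation identities of Proposition~\ref{interpolation result}) controls the composition. Summing the finitely many terms yields $\|\kb u\|^{\M}_{\F^{s}}\le C\|u\|_{\F^{s}}$; running the same argument in reverse starting from $u\in \F^{s,\M}_{\cp}$ gives the bound on $\kf$.

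\emph{Main obstacle.}  There is no single deep inequality; the delicate point is purely the bookkeeping needed to reduce the norm on $\M$ to a finite number of local data on $\R^{m}$ and then to verify that the coordinate-change operation — Jacobian multiplication plus composition with a $BC^{\infty}$-diffeomorphism — is a bounded operation on $\F^{s}$ for every admissible $s$. The uniform regularity of $\tilde{\mathfrak{A}}$ is what makes the transition data uniformly $BC^{\infty}$, and Propositions~\ref{interpolation result} and \ref{pointwise multiplication properties} are what extend integer-order estimates to non-integer $s$; once these are in place the lemma follows.
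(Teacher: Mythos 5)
Your argument is correct in outline, but it takes a genuinely different route from the paper's. You prove both continuity bounds by hand: you localize $\|{\kb}u\|^{\M}_{{\F}^s}$ through the retraction/coretraction pair of Propositions~\ref{S2: Retraction of SB} and \ref{S2: retraction of H-LH}, use finite multiplicity of the augmented atlas to reduce to finitely many charts, and control each local piece by the tensor transformation law (uniformly bounded Jacobian multipliers plus composition with a transition diffeomorphism), passing to non-integer $s$ by interpolation. The paper instead works entirely inside the single chart $(\Uki,\varphi_{\iota})$: the point-wise estimate \eqref{important pt estimate}, which comes from \cite[Lemma 3.1(iv)]{Ama13} and (R4), shows directly that $\sum_{i}{\kf}|\nabla^i a|_g$ is equivalent to $\sum_{|\alpha|\leq r}|\partial^{\alpha}[{\kf}a]|$; this gives boundedness of ${\kf}$ at integer orders, hence for all $s$ by density and interpolation, and boundedness of ${\kb}$ then comes for free from the open mapping theorem, since ${\kf}$ is a bounded linear bijection between the Banach spaces ${\F}^{s,\M}_{\cp}$ and ${\F}^{s}_{\cp}$. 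Two consequences of the difference are worth noting. First, your route imports the invariance of ${\F}^{s}({\R}^m,E)$ under composition with $BC^{\infty}$-diffeomorphisms as an extra (standard but here unproved) ingredient, which the paper's single-chart computation avoids entirely. Second, your route genuinely uses the uniformly regular structure of the augmented atlas (finite multiplicity and the uniform (R2) bounds for all transitions), whereas the paper's proof does not; the paper exploits exactly this independence in its closing remarks of Section~3 to allow the construction to be carried out with an arbitrary compatible atlas. Neither point is a gap for the lemma as stated, but the paper's proof is both shorter and more robust.
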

\begin{proof}
Clearly, we have that ${\kb}{\kf}={\id}_{{\F}_{\cp}^{s,\M}}$ and ${\kf}{\kb}={\id}_{{\F}_{\cp}^{s}}$. It follows from the point-wise estimate \cite[Lemma 3.1(iv)]{Ama13} and (R4) that with $r,\sigma,\tau\in\N_0$
\begin{align}
\label{important pt estimate}
&\sum_{i=0}^r{\kft}|\nabla^i a|_g=\sum_{i=0}^r |{\kft}\nabla^i a|_{{\kft}g}\sim \sum_{i=0}^r |{\kft}\nabla^i a|\sim \sum_{|\alpha|\leq r}|\partial^{\alpha}[{\kft}a]|
\end{align}
for any $a\in\mathcal{T}^{\sigma}_{\tau}\M$ and $\tilde{\kappa}\in\tilde{\mathfrak{K}}$, where $|\cdot|$ denotes the Euclidean norm. By \eqref{important pt estimate}, we conclude that for $k\in{\N}_{0}$ 
\begin{align*}
{\kft}\in\mathcal{L}({\F}^{k}(\M,V),{\F}^{k}(\mathbb{B}^m_{\tilde{\kappa}},E)), \hspace{1em} {\F}\in\{bc,BC,W_p\}.
\end{align*}
The case that $\F=bc$ follows from a density argument based on Proposition~\ref{embedding theory}. We can fill in the non-integer $s$ by Proposition~\ref{interpolation result} and interpolation theory, i.e.,
\begin{align}
\label{S3: kfk is bdd}
{\kft}\in\mathcal{L}({\F}^{s}(\M,V),{\F}^{s}(\Qk,E)), \hspace{1em} {\F}\in\{bc,BC,W_p, H_p\}.
\end{align}
See \cite[Theorem~3.1.2 and 4.1.2]{Berg76}. It implies that ${\kf}\in{\mathcal{L}({\F}_{\cp}^{s,\M},{\F}_{\cp}^{s})}$. Now the assertion is an immediate consequence of the open mapping theorem. 
\end{proof}
\begin{remark}
\label{explanation of the main lemma}
Recall that $\F^{s,\M}_{\cp}$ is a closed subspace of $\F^{s}(\M,V)$. We can identify ${\kb}$ as a map into ${\F}^{s}(\M,V)$, that is to say, ${\kb}\in{\mathcal{L}({\F}_{\cp}^{s},{\F}^{s}(\M,V))}$.
\end{remark}
\begin{prop}
\label{ttm is a diffeomorphism}
\begin{itemize}
\item[]{\phantom{ some  text to complete some  } }
\item[(a)] Suppose that ${\F}\in \{bc,BC,W_{p},H_{p}\}$. Then
\begin{align*}
{\ttm}\in{{\Lis}({\F}^{s}(\M,V))},\text{ and } [{\ttm}]^{-1}={\ttmi}.
\end{align*}
Moreover, $\|{\ttm}\|_{\mathcal{L}({\F}^{s}(\M,V))}\leq{M}$ for some $M>0$ and any $\mu\in{\B}$.
\item[(b)] Suppose that $s\geq{0}$ for ${\F}\in \{bc,W_{p},H_{p}\}$, or $s\in{\N}$ for ${\F}=BC$. Then
\begin{align*}
[\mu\mapsto\ttm u]\in C({\B},{\F}^{s}(\M,V)),\hspace{1em} u\in {\F}^{s}(\M,V).
\end{align*}
\end{itemize}
\end{prop}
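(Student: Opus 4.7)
\textbf{Proof Proposal for Proposition~\ref{ttm is a diffeomorphism}.}

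The plan is to reduce both assertions to the corresponding Euclidean statements for $\tm$ on $\F^s_{\cp}$, which are proved in \cite[Section~2]{EscPruSim03} (componentwise in the finite-dimensional target $E$), transported to the manifold by Lemma~\ref{main lemma 3.1}. Throughout we exploit the layered cut-off setup: $\chi$ drives the translation and is supported in $B_2$, while $\Xo$ (and hence $\Xt$) equals $1$ on $\bar B_4$ and is supported in $B_5$. For $\mu \in \B$ with $r$ small, property (T1) upgraded to higher shells yields $\theta_{\mu}^{\pm 1}(\bar B_4) \subset \bar B_4$ and $\theta_{\mu}^{\pm 1}(\bar B_5) \subset \bar B_5$, so $\tm$ preserves $\F^s_{\cp}$ and has $\tui$ as a two-sided inverse there.

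For part (a), I would first check that the map $\mathcal{M}_\Xt : u \mapsto \Xt u$ is bounded on $\F^s(\M,V)$ by Proposition~\ref{pointwise multiplication properties} (taking $\Xt \in BC^{\infty}$) and sends $\F^s(\M,V)$ into $\F^{s,\M}_{\cp}$, using that $\supp\Xt \subset \psi_\iota(\bar B_5)$. Then I would apply, in order: Lemma~\ref{main lemma 3.1} to obtain $\kf \in \Lis(\F^{s,\M}_{\cp}, \F^s_{\cp})$; the Euclidean result that $\tm \in \Lis(\F^s_{\cp})$ with $\|\tm\| \le M$ uniform in $\mu \in \B$ and inverse $\tui$; and Lemma~\ref{main lemma 3.1} again to push back via $\kb$. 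Together with the bounded multiplier $1_\M - \Xt$, this yields $\ttm \in \mathcal{L}(\F^s(\M,V))$ with uniform bound. The inverse identity $\ttm \circ \ttmi = \id$ is then verified by direct expansion: the first summand of $\ttm(\ttmi u)$ is supported in $\psi_\iota(\bar B_4)$ where $\Xt \equiv 1$, so $\Xt$ acts trivially on it; the second summand $(1_\M - \Xt)\ttmi u = (1_\M - \Xt)u$ since the compactly supported piece of $\ttmi u$ is annihilated by $1_\M - \Xt$; and in between, $\tm \tui = \id$ on $\F^s_{\cp}$ and $\kb \kf = \id$ on $\F^{s,\M}_{\cp}$. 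The reverse composition is analogous, completing (a).

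For part (b), continuity of $\mu \mapsto \ttm u$ reduces, by the factorization above and the continuity of fixed multiplication and pull-back operators, to continuity of $\mu \mapsto \tm v$ in $\F^s_{\cp}$ for every fixed $v \in \F^s_{\cp}$. This is again the prototypical statement of \cite[Section~2]{EscPruSim03}, valid in the stated ranges: translations act as a strongly continuous one-parameter family on $bc^s$, $W^s_p$, and $H^s_p$ for every $s \ge 0$, and on $BC^s$ only for integer $s$ (because strong continuity of translation fails on $BC^s$ for non-integer $s$ without a little-Hölder-type modulus).

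The main obstacle is the bookkeeping in the inverse computation: ensuring that after applying $\tm$ (resp.\ $\tui$) the transformed piece still lies in the class on which the cut-off $\Xt$ equals $1$. This is exactly where the spacing $B_2 \subset B_3 \subset B_4 \subset B_5$ and the smallness of $r$ in $\B$ are used; once that is pinned down, all other steps are routine applications of Lemma~\ref{main lemma 3.1}, Proposition~\ref{pointwise multiplication properties}, and the Euclidean prototype in \cite{EscPruSim03}.
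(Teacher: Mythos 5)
Your proposal is correct and follows essentially the same route as the paper: decompose $\ttm u = (1_{\M}-\Xt)u + \kb\,\tm\,\kf\,\Xt u$, control the multiplier by Proposition~\ref{pointwise multiplication properties}, transfer the Euclidean bounds and continuity of $\tm$ on $\F^{s}_{\cp}$ from \cite[Proposition~2.4]{EscPruSim03} via Lemma~\ref{main lemma 3.1}, and check the inverse identity from the definitions together with (T1). The only detail the paper adds that you omit is the density argument needed to settle the case $\F=bc$ in part (a), which is routine.
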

\begin{proof}
(i) The assertion that ${\ttm}{\ttmi}={\ttmi}{\ttm}={\id}_{\F^{s}(\M,V)}$ is a straightforward consequence of the definitions of ${\ttm}$, ${\ttmi}$ and (T1).
\smallskip\\
(ii) By the open mapping theorem, it is sufficient to show that $\|{\ttm}\|_{\mathcal{L}({\F}^{s}(\M,V))}$ is uniformly bounded with respect to ${\mu\in{\B}}$.
\smallskip\\
Given $f\in BC^{\infty}(\M)$, we define a multiplier operator $\mathsf{m}_{f}: \F^s(\M,V)\rightarrow\F^s(\M,V)$ by
\begin{align*}
\mathsf{m}_{f}: u\mapsto fu.
\end{align*}
By Proposition~\ref{pointwise multiplication properties}, we infer that there exists a constant $M_1$ such that
\begin{align}
\label{S3: multiplication-varsigma 1}
\mathsf{m}_{\varsigma},\mathsf{m}_{1_{\M}-\varsigma}\in\L(\F^s(\M,V)),\hspace*{1em} \|{\varsigma}\|_{\L(\F^s(\M,V))}+\|(1_{\M}-\varsigma)\|_{\L(\F^s(\M,V))}\leq M_1.
\end{align}
Henceforth, we always identify the multiplication operators $\mathsf{m}_{\varsigma}$ and $\mathsf{m}_{1_{\M}-\varsigma}$ with $\varsigma$ and $1_{\M}-\varsigma$, respectively. By Lemma~\ref{main lemma 3.1}, we can find a constant $M_2>0$ such that
\begin{align}
\label{S3: cp-estimate}
\|{\kf}\|_{\L(\F^{s,\M}_{\cp},\F^s_{\cp})}+\|{\kb}\|_{\L(\F^s_{\cp},\F^s(\M,V))}\leq M_2.
\end{align}
Note that $M_1$, $M_2$ depend only on the choice of $\F^s$. On the other hand, by \cite[Proposition~2.4(a)]{EscPruSim03}, there exists a uniform constant $M_3$ with respect to $\mu\in{\B}$ such that
\begin{align*}
\|{\tm}\|_{\mathcal{L}({\F}_{\cp}^{s},{\F}^{s}(\R^m,E))}\leq M_3.
\end{align*}
(T1) implies that ${\tm}({\F}_{\cp}^{s})\subset{{\F}_{\cp}^{s}}$. Since ${\ttm}u=(1_{\M}-\varsigma)u+{\kb}{\tm}{\kf}{\varsigma}u$, there is a uniform constant $M$ such that
\begin{align*}
\|{\ttm}\|_{\L(\F^s(\M,V))}\leq \|1_{\M}-\varsigma\|_{\L(\F^s(\M,V))}+\|{\kb}\circ {\tm}\circ {\kf}\circ {\varsigma}\|_{\L(\F^s(\M,V))}\leq M
\end{align*}
for all $\mu\in\B$. The case $\F=bc$ follows by a density argument.
\smallskip\\
(iii) Pick $\mu,\mu_{0}\in{\B}$. Then
\begin{align*}
\notag\|{\ttm}u-{\Theta}^{\ast}_{\mu_{0}}u\|_{{\F}^{s}}^{\M}&=\|{\kb}({\tm}-\theta^{\ast}_{\mu_{0}}){\kf}{\varsigma}u\|_{{\F}^{s}}^{\M} \leq{M_{2}\|({\tm}-\theta^{\ast}_{\mu_{0}}){\kf}{\varsigma}u\|_{{\F}_{\cp}^{s}}}.
\end{align*}
By Lemma~\ref{main lemma 3.1} and \eqref{S3: multiplication-varsigma 1}, ${\kf}{\varsigma}u\in{{\F}_{\cp}^{s}}$. \cite[Proposition~2.4(b)]{EscPruSim03} implies that ${\ttm}u\rightarrow {\Theta}^{\ast}_{\mu_{0}}u$ in $\F^s(\M,V)$ as $\mu\rightarrow{\mu_{0}}$. This completes the proof.
\end{proof}

\subsection{\bf Higher Regularity}
In this subsection, we will show that regularity of the map $[{\mu}\mapsto {\ttm}u]$ can be inherited from the local smoothness of $u$ near $\p \in\mathring{\M}$. 

Given any open subset ${\U}\subset{\M}$ containing $\p$, by choosing $\varepsilon_0$ small enough, we can always achieve $\psi_{\iota}(B_{3})\subset  \U$. Without loss of generality, we may assume that $\U \subset \psi_{\iota}({B_{4}})$, since $u\in C^k(\U,V)$ implies $u\in C^k(\U^{\prime},V)$ for any $\U^{\prime}\subset \U$. Denote by $V_{\U}$ the restriction of $V$ on $\U$. We say that a function $u\in L_{1,loc}(\U,V_{\U})$ belongs to $\F^{s}(\U,V)$ if ${\kf}u\in \F^s(\varphi_{\iota}(\U),E)$, or equivalently ${\kfp}u\in \F^s(\varphi_{\kp}(\U),E)$ by \eqref{S3: generic transition}.
\smallskip\\
Suppose that $\M$ is a $C^{\omega}$-uniformly regular Riemannian manifold. We say a tensor field $u\in C^{\omega}({\U},V)$ if ${\kf}u\in{C^{\omega}({\varphi_{\iota}({\U})},E)}$, or equivalently ${\kfp}u\in C^{\omega}(\varphi_{\kp}(\U),E)$. Hereafter, it is understood that in the case $k=\omega$, the manifold $\M$ is always assumed to be $C^{\omega}$-uniformly regular.
\begin{theorem}
\label{differentiability of ttm on all function spaces}
Suppose that $u\in{C^{l+k}({\U},V)\cap{{\F}^{s}(\M,V)}}$, where $s\in{[0,l]}$ for ${\F}\in \{bc,W_{p},H_{p} \}$, or $s=l$ for $\F=BC$ with $l\in{\N}_{0}$, $k\in\N\cup\{\infty,\omega\}$. Then
\begin{align*}
[\mu\mapsto{\ttm}u]\in{C^{k}({\B},{\F}^{s}(\M,V))}.
\end{align*}
Moreover, $\partial^{\alpha}_{\mu}[{\ttm}u]={\kb}{\chi^{|\alpha|}}{\tm}\partial^{\alpha}{\kf}{\varsigma}u$.
\end{theorem}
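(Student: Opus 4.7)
The strategy is to reduce the problem to the Euclidean prototype from \cite{EscPruSim03}. From the global expression
\[
\ttm u = (1_\M - \varsigma)u + \kb\,\tm\,\kf\varsigma u
\]
recorded in Section~3.1, only the second summand depends on $\mu$. Since Lemma~\ref{main lemma 3.1} supplies $\kb \in \Lis(\F^s_{\cp}, \F^{s,\M}_{\cp})$ as a fixed bounded linear operator, and $v := \kf\varsigma u \in \F^s_{\cp}$ is $\mu$-independent, it suffices to establish
\[
[\mu \mapsto \tm v] \in C^k(\B, \F^s_{\cp}), \qquad \partial^\alpha_\mu(\tm v) = \chi^{|\alpha|}\,\tm\,\partial^\alpha v \quad (|\alpha|\le k),
\]
after which the claimed formula for $\ttm u$ follows by applying $\kb$.

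To invoke \cite{EscPruSim03}, I need $v$ to be locally smooth on a set large enough to contain all the iterates of $\theta_\mu$. Shrinking $\varepsilon_0$ if necessary, we may assume $\psi_\iota(\bar B_5) \subset \U$. Since $\Xo \equiv 1$ on $\bar B_4 \supset \bar B_3$, $\varsigma \equiv 1$ on a neighborhood of $\psi_\iota(\bar B_3)$, so $v$ coincides with $\kf u$ there; combined with $u \in C^{l+k}(\U,V)$ this yields $v \in C^{l+k}(U_0,E)$ on some open $U_0 \supset \bar B_3$, while $v \in \F^s_{\cp}$ by Lemma~\ref{main lemma 3.1} and Proposition~\ref{pointwise multiplication properties}. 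The prototypical results in \cite[\S2]{EscPruSim03}, formulated there for scalar-valued functions but extending componentwise to $E = \K^N$, then give $[\mu \mapsto \tm v] \in C^k(\B, \F^s_{\cp})$ for finite $k$, the derivative formula emerging by iterating the pointwise chain rule
\[
\partial_{\mu_j}(\tm v)(x) = \chi(x)(\partial_j v)(x+\chi(x)\mu) = \chi(x)\,\tm(\partial_j v)(x).
\]
Each summand $\chi^{|\alpha|}\tm\partial^\alpha v$ lies in $\F^s_{\cp}$ because $\chi^{|\alpha|}$ is a smooth compactly supported multiplier, $\partial^\alpha v \in BC^{l+k-|\alpha|}$ near $\bar B_3$, and $\tm$ is uniformly bounded on $\F^s$ by Proposition~\ref{ttm is a diffeomorphism}(a); continuity in $\mu$ follows from Proposition~\ref{ttm is a diffeomorphism}(b). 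The case $k=\infty$ drops out by intersecting over all finite orders.

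The real-analytic case $k = \omega$ is the main obstacle, since it requires upgrading the formal Taylor expansion to a genuinely convergent one. I would combine three ingredients: (i) analyticity of $u$ on $\U$ furnishes Cauchy-type estimates $\|\partial^\alpha v\|_{L^\infty(U_0')} \le C\,R^{-|\alpha|}|\alpha|!$ on some relatively compact neighborhood $U_0' \Subset U_0$ of $\bar B_3$; (ii) the uniform operator bounds for $\tm$ (Proposition~\ref{ttm is a diffeomorphism}(a)) and $\kb$ (Lemma~\ref{main lemma 3.1}), together with multiplication by $\chi^{|\alpha|}$ controlled by Proposition~\ref{pointwise multiplication properties} with a uniform bound since $\|\chi\|_\infty \le 1$, and the embedding from Proposition~\ref{embedding theory}, upgrade these to
\[
\|\kb\,\chi^{|\alpha|}\,\tm\,\partial^\alpha v\|_{\F^s(\M,V)} \le C'\,(R')^{-|\alpha|}|\alpha|!;
\]
(iii) for $r < R'$ this makes the Taylor series in $\mu$ converge absolutely in $\F^s(\M,V)$ uniformly on $\B$. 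Uniqueness of the Taylor expansion, which is already available from the finite-$k$ part of the proof, identifies the sum with $\ttm u$ and thereby delivers its real analyticity in $\mu$.
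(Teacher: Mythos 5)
For finite $k$ your argument is essentially the paper's: write $\ttm u=(1_\M-\varsigma)u+\kb\,\tm\,\kf\varsigma u$, observe that $v:=\kf\varsigma u$ lies in $C^{l+k}$ near $\bar B_3$ and in $\F^s_{\cp}$, push everything to the Euclidean prototype, and recover the derivative formula $\partial^\alpha_\mu=\kb\chi^{|\alpha|}\tm\partial^\alpha\kf\varsigma$ by difference quotients; the paper does exactly this, justifying the convergence of the difference quotients in $BC^l_{\cp}\hookrightarrow\F^s_{\cp}$ via \cite[Proposition~3.2]{EscPruSim03} rather than a bare pointwise chain rule, which is the one detail you should make explicit (pointwise differentiability of $\mu\mapsto\tm v(x)$ does not by itself give differentiability in the $\F^s$-norm).

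Where you genuinely diverge is the case $k=\omega$. The paper disposes of it in one line by citing \cite[Theorem~3.3]{EscPruSim03}, which already covers $k\in\N\cup\{\infty,\omega\}$ for the Euclidean truncated translation; composing with the fixed bounded linear map $\kb$ then preserves analyticity. You instead rebuild the Euclidean analyticity from Cauchy estimates. The ingredients you list (uniform-in-$\mu$ bounds $\|\kb\chi^{|\alpha|}\tm\partial^\alpha v\|_{\F^s}\le C'(R')^{-|\alpha|}|\alpha|!$ on all of $\B$) are indeed sufficient, but your closing step is not justified as written: absolute convergence of the Taylor series of a $C^\infty$ function, together with ``uniqueness of the Taylor expansion,'' does \emph{not} identify the sum with the function (consider $e^{-1/x^2}$ at the origin). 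What actually closes the argument is that the Cauchy-type derivative bounds hold uniformly on a neighborhood of each $\mu_0$, so the integral (or Lagrange) form of the Taylor remainder tends to zero; you have the estimates to say this, but you must say it. Also note that for $s=l>0$ the Cauchy estimates must be taken in the $BC^l$-norm, so the derivatives of $\chi^{|\alpha|}$ contribute factors growing polynomially in $|\alpha|$; these are absorbable into $(R')^{-|\alpha|}$ but deserve a word. In short: correct in outline, same reduction as the paper, with a longer but repairable route through the analytic case; citing the Euclidean result directly is both shorter and avoids the remainder issue.
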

\begin{proof}
As in Proposition~\ref{ttm is a diffeomorphism}, one checks that ${\kf}{\varsigma}u\in C^{l+k}(\varphi_{\iota}({\U}),E)\cap \F^s_{\cp}$ for every $u\in C^{l+k}({\U},V)\cap\F^s(\M,V)$.
We conclude from \cite[Theorem~3.3]{EscPruSim03} that
\begin{align*}
[\mu\mapsto {\tm}{\kf}{\varsigma}u]\in C^k(\B, \F^s_{\cp}).
\end{align*} 
By Remark~\ref{explanation of the main lemma}, ${\kb}\in \L(\F^s_{\cp},\F^s(\M,V))$ and thus is real analytic. This proves the first part of the assertion. 
\smallskip\\
Pick $\mu\in{\B}$ and choose $\varepsilon>0$ so small that $\mu+he_{j}\in{\B}$ for all $h\in{(-\varepsilon,\varepsilon)}$ and $j\in\{1,2,...,m\}$.
\begin{align}
\label{S3: derivative of ttm}
\notag\lim\limits_{\varepsilon\rightarrow 0}\frac{1}{h}({\Theta}^{\ast}_{\mu+he_{j}}u-{\ttm}u)&={\kb}\lim\limits_{\varepsilon\rightarrow 0}\frac{1}{h}({\theta}^{\ast}_{\mu+he_j}{\kf}{\varsigma}u-{\tm}{\kf}{\varsigma}u)\\
&={\kb}{\chi}{\tm}\partial_j{\kf}{\varsigma}u,
\end{align}
which converges in $\F^s(\M,V)$. The first equality follows from the boundedness of ${\kb}$. Meanwhile, it follows from \cite[Proposition~3.2]{EscPruSim03} and the fact ${\kf}{\varsigma}u\in C^{l+k}(\varphi_{\iota}(\U),E)\cap BU\!C^{l}(B_3,E)$ that 
\begin{align*}
\lim\limits_{\varepsilon\rightarrow 0}\frac{1}{h}({\theta}^{\ast}_{\mu+he_j}{\kf}{\varsigma}u-{\tm}{\kf}{\varsigma}u)
\end{align*}
converges in $BC^{l}_{\cp}$. Here $BU\!C^l(\Q,E)$ is the closed linear subspace of $BC^l(\Q,E)$ consisting of $u\in BC^{l}(\Q,E)$ such that $\partial^{\alpha}u$ is uniformly continuous for all $|\alpha|\leq l$. Owing to the embedding $BC^{l}_{\cp}\hookrightarrow \F^s_{\cp}$ and Lemma~\ref{main lemma 3.1}, the convergence of \eqref{S3: derivative of ttm} in $\F^s(\M,V)$ now is straightforward. The rest of the assertion follows by induction.
\end{proof}

The following inverse of Theorem~\ref{differentiability of ttm on all function spaces} is of indispensable character in analyzing regularity of solutions to differential equations.
\begin{theorem}
\label{main theorem-iff}
Let $k\in\N\cup\{\infty,\omega\}$. Suppose that $u\in{BC(\M,V)}$. Then $u\in{C^{k}({\M},V)}$ iff for any $\p \in\mathring{\M}$, there exists $r=r(\p)>0$ and a corresponding family of parameter-dependent diffeomorphisms $\{{\ttm}:r\in\B\}$ such that
\begin{align*}
[\mu\mapsto{\ttm}u]\in{C^{k}({\B},BC(\M,V))}.
\end{align*}
\end{theorem}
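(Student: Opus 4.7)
The plan is to establish both implications, with the reverse direction carrying the substantive content. Throughout I fix $\p \in \mathring{\M}$ and work in the chart $(\Uki, \varphi_\iota)$ of Section~3.1, which satisfies $\varphi_\iota(\p) = 0$. For the forward direction, given $u \in C^k(\M, V) \cap BC(\M, V)$, I would choose $r > 0$ small enough that $\psi_\iota(B_3) \subset \mathring{\M}$; then $u$ is $C^k$ on this neighborhood and lies in $BC^0(\M, V) = BC(\M, V)$, so Theorem~\ref{differentiability of ttm on all function spaces} applied with $\F = BC$ and $l = s = 0$ (the case $s = l$ is explicitly permitted for $\F = BC$) delivers $[\mu \mapsto \ttm u] \in C^k(\B, BC(\M, V))$.

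For the reverse direction, I assume the stated family exists and shrink $r < \varepsilon_0$ so that $\chi \equiv 1$ and $\Xo \equiv 1$ on $\B$; this forces $\theta_\mu(0) = 0 + \chi(0)\mu = \mu$ for every $\mu \in \B$. The explicit global formula $\ttm u = \kb \tm \kf(\Xt u) + (1_\M - \Xt)u$, together with the fact that in Section~2 the operators $\kf$ and $\kb$ act componentwise in the coordinate basis $\{\partial_{x^{(i)}} \otimes dx^{(j)}\}$ rather than via the geometric pullback, shows after a direct computation that $(\ttm u)(\p)$ has components in the fixed basis at $\p$ equal to $u^{(i)}_{(j)}(\psi_\iota(\mu))$, where $u^{(i)}_{(j)}$ are the components of $u$ in the chart $\varphi_\iota$.

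The final step is to compose with evaluation at $\p$. Since $|w(\p)|_g \le \|w\|_\infty^\M$ and the fiber $V_\p$ is finite dimensional, the map $w \mapsto w(\p)$ is a bounded linear map $BC(\M, V) \to V_\p$, so composing with the hypothesis $[\mu \mapsto \ttm u] \in C^k(\B, BC(\M, V))$ yields $[\mu \mapsto (\ttm u)(\p)] \in C^k(\B, V_\p)$. Reading off coordinate components gives $u^{(i)}_{(j)} \circ \psi_\iota \in C^k(\B, \K)$ for every index pair, which is exactly the statement that $u$ is $C^k$ in the chart around $\p$; by arbitrariness of $\p$, $u \in C^k(\mathring{\M}, V)$. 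The case $k = \omega$ works identically, since bounded linear maps preserve real analyticity and $C^\omega$-uniform regularity of $\M$ supplies the ambient analytic structure.

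The main obstacle I anticipate is the coordinate bookkeeping in the second paragraph: one must verify that $(\ttm u)(\p)$ really encodes the values $u(\psi_\iota(\mu))$ in its components, with no transformation Jacobian intruding. This is clean here precisely because Section~2's $\kf$ and $\kb$ extract and reassemble coordinate components rather than perform a geometric tensor pullback, so the Jacobian of $\Theta_\mu$ never enters. Once this identification is verified, the remainder follows from continuity of pointwise evaluation and $C^k$-stability under composition with bounded linear maps.
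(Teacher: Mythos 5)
Your proof is correct and takes essentially the same route as the paper: the forward implication via Theorem~\ref{differentiability of ttm on all function spaces} with $l=s=0$, and the reverse implication by composing the hypothesis with a pointwise evaluation (a bounded linear, hence $C^{\omega}$, map) and verifying that this evaluation applied to $\ttm u$ returns ${\kf}u(\mu)$, with no Jacobian because ${\kf}$, ${\kb}$ act by extracting and reassembling coordinate components. Your evaluation into the fiber $V_{\p}$ coincides, up to the fixed chart identification of $V_{\p}$ with $E$, with the paper's map $\tilde{\gamma}_{\p}:u\mapsto{\kf}u(0)$.
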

\begin{proof}
The ``only if" part has been established in Theorem~\ref{differentiability of ttm on all function spaces}. So we only deal with the ``if" part. For every $\p \in\mathring{\M}$, consider the evaluation map 
\begin{align*}
\tilde{\gamma}_{\p}: BC(\M,V)\rightarrow{E},\hspace{1em} u\mapsto{\kf}u(\varphi_{\iota}(\p))={\kf}u(0).
\end{align*}
Then $\tilde{\gamma}_{\p}\in{\mathcal{L}(BC(\M,V),E)}$, and thus is real analytic. Moreover, for small enough $r$ it holds that
\begin{align*}
\tilde{\gamma}_{\p}({\ttm}u)=({\kf}{\kb}{\tm}{\kf}{\varsigma}u)(0)+({\kf}(1_{\M}-\varsigma)u)(0)={\kf}u(\mu).
\end{align*}
This implies that $[\mu\mapsto {\kf}u(\mu)]\in{C^{k}({\B},E)}$. By \eqref{S3: generic transition}, it yields
\begin{align*}
[y\mapsto {\kfp}u(y)]\in{C^{k}(\mathbb{B}(\x,rd),E)}. 
\end{align*}
Hence for any $\p \in\mathring{\M}$, we can find a local chart $(\psi_{\kp}(\mathbb{B}(\x,rd)),\varphi_{\kp})$ with $\kp\in\mathfrak{K}$ such that $\varphi_{\kp}(\p)=\x$ and ${\kfp}u\in{C^{k}(\mathbb{B}(\x,rd),E)}$. Therefore, $u\in{C^{k}({\M},V)}$ by definition.
\end{proof}

\subsection{\bf Differential Operators}
Let $l\in{\N}_0$. A linear operator $\mathcal{A}:C^{\infty}({\M},V)\rightarrow \Gamma({\M},V)$ is called a linear differential operator of order $l$ on ${\M}$ if we can find $\boldsymbol{\mathfrak{a}}=(a^r)_r\in \prod_{r=0}^l \Gamma(\M, V^{\sigma+\tau+r}_{\tau+\sigma})$ such that
\begin{align}
\label{section 2: globally-defined diff-op}
\mathcal{A}=\mathcal{A}(\boldsymbol{\mathfrak{a}}):=\sum\limits_{r=0}^l \ev(a^r,\nabla^r \cdot).
\end{align}
Here $\ev:\Gamma(\M, V^{\sigma+\tau+r}_{\tau+\sigma}\times V^{\sigma}_{\tau+r})\rightarrow \Gamma(\M, V^\sigma_\tau)$ is the complete contraction, see \cite[Section~2.3]{ShaoSim13} for details. In every local chart $({\Uk},\varphi_{\kappa})$, there exists some linear differential operator defined on $\Qk$
\begin{align*}
\mathcal{A}_{\kappa}(x,\partial):=\sum_{|\alpha|\leq{l}}a^{\kappa}_{\alpha}(x)\partial^{\alpha},\hspace*{.5em}\text{ with }a^{\kappa}_{\alpha}\in \L(E)^{\Qk},
\end{align*}
called the local representation of $\mathcal{A}$ in $(\Uk,\varphi_{\kappa})$, such that for any $u\in C^{\infty}({\M},V)$
\begin{align}
\label{S3:local exp}
{\kfk}(\mathcal{A}u)=\mathcal{A}_{\kappa}({\kfk}u).
\end{align}
Proposition~\ref{pointwise multiplication properties} empowers us to extend \cite[Corollary~2.10]{ShaoSim13} to: 
\begin{prop}
\label{differential operator properties}
Suppose that $\mathcal{A}:=\mathcal{A}(\boldsymbol{\mathfrak{a}})$ is a linear differential operator of order $l$ on ${\M}$ such that $\boldsymbol{\mathfrak{a}}=(a^r)_r\in \prod_{r=0}^l BC^{t}(\M,V^{\sigma+\tau+r}_{\tau+\sigma})$, or equivalently $(a_{\alpha}^{\kappa})_{\kappa}\in l_{\infty}(\boldsymbol{BC}^{t}(\Qk,\L(E)))$ for all $|\alpha|\leq l$ and $\kappa\in\mathfrak{K}$. Then 
\begin{align*}
\mathcal{A}\in\mathcal{L}({\F}^{s+l}(\M,V),{\F}^{s}(\M,V)), 
\end{align*}
where $t>s$ for ${\F}\in \{bc,W_{p},H_{p} \}$, or $t\geq s$ for ${\F}=BC$, or $t=s\in\N_0$ for $\F=W_p$.
\end{prop}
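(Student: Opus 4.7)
My strategy is to extend the argument of \cite[Corollary~2.10]{ShaoSim13} by decomposing $\mathcal{A}$ into its intrinsic constituents. Writing $\mathcal{A} = \sum_{r=0}^{l}\ev(a^r, \nabla^r\cdot)$ as in \eqref{section 2: globally-defined diff-op}, it suffices, for each $r\in\{0,1,\dots,l\}$, to show that $\ev(a^r,\nabla^r\cdot)$ maps $\F^{s+l}(\M,V)$ continuously into $\F^{s}(\M,V)$. For each fixed $r$ I factor this map as the composition of three bounded operations:
\begin{align*}
\F^{s+l}(\M,V) \xrightarrow{\;\nabla^r\;} \F^{s+l-r}(\M,V^{\sigma}_{\tau+r}) \xrightarrow{\;\hookrightarrow\;} \F^{s}(\M,V^{\sigma}_{\tau+r}) \xrightarrow{\;\ev(a^r,\cdot)\;} \F^{s}(\M,V).
\end{align*}

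\textbf{The individual steps.} Step one is the continuity of the iterated covariant derivative $\nabla^r:\F^{s+l}(\M,V)\to\F^{s+l-r}(\M,V^{\sigma}_{\tau+r})$. By (R4), the Christoffel symbols in each chart have $\boldsymbol{BC}^\infty$-norms bounded uniformly in $\kappa$; hence $\nabla$ corresponds through $\Rc$ to a first-order Euclidean differential operator whose coefficients lie in $l_\infty(\boldsymbol{BC}^\infty)$, which is uniformly continuous between Euclidean $\F$-spaces, and so via the retractions of Propositions \ref{S2: Retraction of SB}--\ref{S2: retraction of H-LH} transfers to a bounded operator on the global spaces. Iterating gives the desired bound on $\nabla^r$. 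Step two, the embedding $\F^{s+l-r}(\M,V^{\sigma}_{\tau+r})\hookrightarrow\F^{s}(\M,V^{\sigma}_{\tau+r})$, is immediate from Proposition~\ref{embedding theory}(a) (and, for $\F=BC$, from Proposition~\ref{embedding theory}(b) combined with $bc^{s}\hookrightarrow BC^{s}$, or trivially when $r=l$), using that $l-r\ge 0$. Step three is the contraction $\ev(a^r,\cdot)$, which is a bundle multiplication by a fixed section $a^r\in BC^{t}(\M,V^{\sigma+\tau+r}_{\tau+\sigma})$; boundedness on $\F^s$ follows from Proposition~\ref{pointwise multiplication properties}, with the case analysis matching the hypothesis on $t$: part~(a) handles $\F\in\{W_p,H_p\}$ with $t>s$; part~(b) handles $\F=W_p$ with $t=s\in\N_0$; part~(c) handles $\F=BC$ with $t\geq s$ (preceded by $BC^t\hookrightarrow BC^s$ if $t>s$); and for $\F=bc$ with $t>s$ one first embeds $BC^t\hookrightarrow bc^s$ via Proposition~\ref{embedding theory}(b) and then applies part (c).

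\textbf{The main obstacle.} The substantive work is in step one: proving uniform $\F^s$-continuity of the iterated covariant derivative on a uniformly regular Riemannian manifold. This rests on (R1)--(R4) and the equivalence of the intrinsic $\F^s(\M,V)$-norm with the $l_?(\boldsymbol{\F}^s)$-norm of the local components, and was essentially carried out in \cite{ShaoSim13}. The genuinely new point here is the ordering of steps (ii) and (iii): the weak coefficient hypothesis $t>s$ (as opposed to $t>s+l$) is insufficient to multiply $\nabla^r u\in\F^{s+l-r}$ by $a^r\in BC^t$ directly, so one must first use the embedding of step (ii) to drop regularity down to $\F^s$ before invoking Proposition~\ref{pointwise multiplication properties}. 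Summing the estimates over $r=0,\dots,l$ completes the proof.
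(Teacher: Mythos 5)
Your argument is correct, and it supplies an actual proof where the paper gives none: the paper simply asserts that Proposition~\ref{pointwise multiplication properties} allows one to extend \cite[Corollary~2.10]{ShaoSim13}, leaving the details to that reference. The implicit route there is the local one — reduce to the representations $\mathcal{A}_{\kappa}=\sum_{|\alpha|\leq l}a^{\kappa}_{\alpha}\partial^{\alpha}$ via the retraction $\Re$ and coretraction $\Rc$ of Propositions~\ref{S2: Retraction of SB}--\ref{S2: retraction of H-LH}, and invoke uniform boundedness of the Euclidean operators with $l_{\infty}(\boldsymbol{BC}^{t})$-coefficients — whereas you work intrinsically, factoring each term $\ev(a^r,\nabla^r\cdot)$ through $\nabla^r$, an embedding, and a bundle multiplication. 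The two are equivalent in substance (your step one is itself proved by exactly the local reduction the paper cites), but your version makes the logical dependence on the hypotheses transparent: in particular you correctly observe that the contraction with $a^r\in BC^{t}$ must be applied only after lowering $\nabla^r u$ from $\F^{s+l-r}$ to $\F^{s}$, since $t>s$ does not imply $t>s+l-r$, and your case-matching of the three regimes for $t$ against parts (a), (b), (c) of Proposition~\ref{pointwise multiplication properties} (with the detour $BC^{t}\hookrightarrow bc^{s}\hookrightarrow BC^{s}$ where Proposition~\ref{embedding theory}(a) does not cover $BC$) is exactly what is needed. No gap.
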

The parameter-dependent family of diffeomorphisms, i.e., $\{{\ttm}:\mu\in{\B}\}$, generates a parameter-dependent family of differential operators, $\{{\Al}:\mu\in{\B}\}$, on ${\M}$, given by
\begin{align*}
{\Al}:={\ttm}\mathcal{A}{\ttmi}.
\end{align*}
We shall show that regularity of the coefficients $a_{\alpha}^{\kappa}$ translates into the smoothness of the map $[\mu\mapsto{\Al}]$.
\begin{prop}
\label{differentiability of differential operators-variable}
Let $n\in{\N}_{0}$, $k\in{\N}_{0}\cup\{\infty,\omega\}$, and $\p\in\mathring{\M}$.
\begin{itemize}
\item[(a)] Let $s\in[0,n]$ if ${\F}\in\{BC,W_{p},H_{p}\}$, or $s\in[0,n)$ if $\F=bc$. Suppose that $\mathcal{A}:=\mathcal{A}(\boldsymbol{\mathfrak{a}})$ is a linear differential operator on $\M$ of order $l$ satisfying
\begin{align*}
\boldsymbol{\mathfrak{a}}=(a^r)_r\in \prod_{r=0}^l BC^{n}(\M,V^{\sigma+\tau+r}_{\tau+\sigma})\cap C^{n+k}(\U,V^{\sigma+\tau+r}_{\tau+\sigma}),
\end{align*}
or equivalently for all $|\alpha|\leq l$ and $\kappa\in\mathfrak{K}$, $(a_{\alpha}^{\kappa})_{\kappa}\in l_{\infty}(\boldsymbol{BC}^{n}(\Qk,\L(E)))$ and $a_{\alpha}^{\kp}\in C^{n+k}(\varphi_{\kp}({\U}),\L(E))$ with $\U$ defined as in Section~3.2. Then
\begin{align*}
[\mu\mapsto{\Al}]\in{C^{k}({\B},\mathcal{L}({\F}^{s+l}(\M,V),{\F}^{s}(\M,V)))}.
\end{align*}
\item[(b)] Let $s\in[0,n]$. Suppose that $\mathcal{A}$ satisfies
\begin{align*}
\boldsymbol{\mathfrak{a}} \in \prod_{r=0}^l bc^{s}(\M,V^{\sigma+\tau+r}_{\tau+\sigma})\cap C^{n+k}(\U,V^{\sigma+\tau+r}_{\tau+\sigma}),
\end{align*}
or equivalently for all $|\alpha|\leq l$ and $\kappa\in\mathfrak{K}$, $(a_{\alpha}^{\kappa})_{\kappa}\in l_{\infty,\uf}(\boldsymbol{bc}^{s}(\Qk,\L(E)))$ and $a_{\alpha}^{\kp}\in C^{n+k}(\varphi_{\kp}({\U}),\L(E))$ with $\U$ defined as in Section~3.2. Then
\begin{align*}
[\mu\mapsto{\Al}]\in{C^{k}({\B},\mathcal{L}({bc}^{s+l}(\M,V),{bc}^{s}(\M,V)))}.
\end{align*}
\end{itemize}
\end{prop}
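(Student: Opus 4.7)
The plan is to exhibit $\mathcal{A}_\mu$ as the sum of a $\mu$-independent operator plus a piece which, after localization to the chart $(\Uki,\varphi_\iota)$, reduces to a Euclidean conjugation whose smooth dependence on $\mu$ is covered by \cite[Theorem~3.3]{EscPruSim03}.

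First I would fix an auxiliary cut-off $\eta\in C^\infty(\M,[0,1])$ with $\eta\equiv 1$ on $\psi_\iota(\bar B_3)$ and $\supp\eta\subset\psi_\iota(B_4)$. Since $\theta_\mu=\id$ outside $B_2\subset B_3$, one has $\eta\circ\Theta_\mu^{\pm 1}=\eta$. A direct computation using the formulas for $\ttm,\ttmi$, the support-preservation of $\mathcal{A}$, and the fact that $\theta_\mu$ fixes a neighbourhood of $\partial B_3$ yields the decomposition
\begin{equation*}
\mathcal{A}_\mu u \;=\; \ttm\,\mathcal{A}\,\ttmi(\eta u)\;+\;\mathcal{A}\bigl((1-\eta)u\bigr),\qquad u\in\F^{s+l}(\M,V).
\end{equation*}
The second summand is $\mu$-independent, so it suffices to establish $C^k$-regularity in $\mu$ of the operator $\mathcal{B}_\mu:=\ttm\,\mathcal{A}\,\ttmi\circ\eta$.

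For $u\in\F^{s+l}(\M,V)$ one has $\eta u\in\F_{\cp}^{s+l,\M}$, and because $\theta_\mu(\bar B_3)\subset \bar B_3$ the tensor $\ttmi(\eta u)$ is again supported in $\psi_\iota(\bar B_4)$. Combining the local representation~\eqref{S3:local exp} with Lemma~\ref{main lemma 3.1} then produces the factorization
\begin{equation*}
\mathcal{B}_\mu u \;=\; \kb\,\bigl(\tu\,\mathcal{A}_\iota\,\tui\bigr)\,\kf(\eta u),
\end{equation*}
where $\mathcal{A}_\iota(x,\partial)=\sum_{|\alpha|\le l}a_\alpha^\iota(x)\partial^\alpha$ is the local representation of $\mathcal{A}$ in $(\Uki,\varphi_\iota)$. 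The three outer factors $\eta$, $\kf$, $\kb$ are $\mu$-independent and bounded between the appropriate spaces by Proposition~\ref{pointwise multiplication properties}, Lemma~\ref{main lemma 3.1}, and Remark~\ref{explanation of the main lemma}. The problem is thereby reduced to proving
\begin{equation*}
\bigl[\mu\mapsto \tu\,\mathcal{A}_\iota\,\tui\bigr]\in C^k\bigl(\B,\L(\F^{s+l}_\cp,\F^s(\R^m,E))\bigr).
\end{equation*}

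This Euclidean statement is the heart of the matter. Expanding the conjugation by the chain rule represents $\tu\,\mathcal{A}_\iota\,\tui$ as a differential operator $\sum_{|\alpha|\le l}b_\alpha^\mu(x)\partial^\alpha$ whose coefficients are polynomial expressions in $a_\beta^\iota\circ\theta_\mu$ and the derivatives of $\theta_\mu^{-1}$; the latter are real-analytic in $\mu$, while the global $BC^n$ (resp.\ $bc^s$) hypothesis on $(a_\alpha^\kappa)_\kappa$ together with the local $C^{n+k}$-smoothness of $a_\alpha^\iota$ near $\p$ gives the $C^k$-regularity of $\mu\mapsto a_\alpha^\iota\circ\theta_\mu$ in the required norm. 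Both ingredients, applied at the operator level, are exactly the content of \cite[Theorem~3.3]{EscPruSim03}, whose proof carries over verbatim to the $\L(E)$-valued setting. The main obstacle is case~(b): the little-Hölder scale $bc^s$ is not covered directly by the classical Hölder arguments, but it is handled by a density argument based on Proposition~\ref{embedding theory} combined with the uniform-continuity description of $bc^s$ via $l_{\infty,\uf}(\boldsymbol{bc}^s)$ from Section~2. Once this Euclidean claim is in hand, composing with the three $\mu$-independent bounded maps above delivers the asserted $C^k$-regularity of $\mathcal{B}_\mu$, and hence of $\mathcal{A}_\mu$.
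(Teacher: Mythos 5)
Your proof is correct and follows essentially the same route as the paper: split off the $\mu$-independent far-field part with a cut-off, reduce the localized part to the Euclidean conjugation $\theta^{\ast}_{\mu}\mathcal{A}_{\iota}\theta^{\mu}_{\ast}$ via $\kb$, $\kf$ and the local representation \eqref{S3:local exp}, and quote the Euclidean result of Escher--Pr\"uss--Simonett (the paper handles the $E$-valued issue by a componentwise matrix reduction rather than rerunning the proof, and for (b) invokes \cite[Proposition~2.9]{ShaoSim13}). One small correction: the operator-level Euclidean statement you need is \cite[Theorem~4.2]{EscPruSim03}, not Theorem~3.3, which concerns $[\mu\mapsto\theta^{\ast}_{\mu}u]$ for functions.
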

\begin{proof}
Note that $\mathcal{A}_{\iota}={\kf}{\kbp}\mathcal{A}_{\kp}{\kfp}{\kb}$.
The conditions of (a) and \eqref{S3: generic transition} imply that $a_{\alpha}^{\iota}\in C^{n+k}(\varphi_{\iota}({\U}),\L(E))\cap BC^{n}(\Q,\L(E))$.
For any $u\in{{\F}^{s+l}(\M,V)}$, 
\begin{align*}
{\Am}u&={\ttm}\mathcal{A}{\kb}\theta^{\mu}_{\ast}{\kf}{\varsigma}u+{\ttm}\mathcal{A}(1_{\M}-\varsigma)u\\
&=\underbrace{{\kb}{\tm}{\kf}{\varsigma}\mathcal{A}{\kb}\theta^{\mu}_{\ast}{\kf}{\varsigma}u+(1_{\M}-\varsigma)\mathcal{A}{\kb}\theta^{\mu}_{\ast}{\kf}{\varsigma}u}_{{\Af}u}\\
&\quad+\underbrace{{\kb}{\tm}{\kf}{\varsigma}\mathcal{A}(1_{\M}-\varsigma)u+(1_{\M}-\varsigma)\mathcal{A}(1_{\M}-\varsigma)u}_{{\As}u}.
\end{align*}
Now we compute ${\Af}u$ and ${\As}u$ separately. Observe that ${\kb}\theta^{\mu}_{\ast}{\kf}(1_{\M}-\varsigma)v=(1_{\M}-\varsigma)v$ for $v$ compactly supported in $\Uki$. We hence infer that
\begin{align}
\label{S3: eq-Af}
{\Af}u &={\kb}{\tm}{\kf}({\varsigma}+(1_{\M}-\varsigma))\mathcal{A}{\kb}{\theta^{\mu}_{\ast}}{\kf}{\varsigma}u\\
\notag&={\kb}{\tm}\mathcal{A}_{\iota}{\kf}{\kb}{\theta^{\mu}_{\ast}}{\kf}{\varsigma}u={\kb}{\tm}\mathcal{A}_{\iota}{\theta^{\mu}_{\ast}}{\kf}{\varsigma}u.
\end{align}
The second equality follows from \eqref{S3:local exp}. A similar argument to \eqref{S3: eq-Af} implies
\begin{align*}
{\As}u&={\varsigma}\mathcal{A}(1_{\M}-\varsigma)u+(1_{\M}-\varsigma)\mathcal{A}(1_{\M}-\varsigma)u
=\mathcal{A}(1_{\M}-\varsigma)u.
\end{align*}
Proposition~\ref{differential operator properties} and \eqref{S3: multiplication-varsigma 1} yield ${\As}\in\mathcal{L}({\F}^{s+l}(\M,V),{\F}^{s}(\M,V))$, thus
\begin{align*}
[\mu\mapsto{\As}]\in{C^{\omega}({\B},\mathcal{L}({\F}^{s+l}(\M,V),{\F}^{s}(\M,V)))}.
\end{align*}
Put $N=m^{\sigma}\times m^{\tau}$. Since the space ${\F}^{s+l}(\Q,E)$ can be identified as $\F^{s+l}(\Q)^N$, we can rewrite $\mathcal{A}_{\iota}$ in matrix form as $\mathcal{A}_{\iota}=(\mathcal{A}_{\iota,ij})_{1\leq i,j \leq N}$. Each $\mathcal{A}_{\iota,ij}$ is a linear differential operator of order at most $l$ with $C^{n+k}(\varphi_{\iota}({\U}))\cap {BC}^{n}(\Qi)$-coefficients acting on $\F^{s+l}(\Qi)$. 
By \cite[Theorem~4.2]{EscPruSim03}, 
\begin{align}
\label{section 3 eq-const coef-scalar}
[\mu\mapsto{\tm}\mathcal{A}_{\iota,ij}{\theta^{\mu}_{\ast}}]\in{C^{k}({\B},\mathcal{L}({\F}_{\cp}^{s+l}(\R^m),{\F}_{\cp}^{s}(\R^m)))}. 
\end{align}
\eqref{section 3 eq-const coef-scalar} implies that
\begin{align*}
[\mu\mapsto{\tm}\mathcal{A}_{\iota}{\theta^{\mu}_{\ast}}]\in{C^{k}({\B},\mathcal{L}({\F}_{\cp}^{s+l},{\F}_{\cp}^{s}))}.
\end{align*}
In virtue of Lemma~\ref{main lemma 3.1}, Remark~\ref{explanation of the main lemma}, formula~\eqref{S3: multiplication-varsigma 1} and the independence of the map $[u\mapsto{\kf}{\varsigma}u]$ and ${\kb}$ on $\mu$, we immediately conclude that
\begin{align*}
[\mu\mapsto \Af]\in C^{k}(\B,\L({\F}^{s+l}(\M,V),{\F}^{s}(\M,V))).
\end{align*}
This establishes the assertion (a). Statement (b) follows from an analogous argument in light of \cite[Proposition~2.9]{ShaoSim13}.
\end{proof}

\subsection{\bf Time Dependence}
Hereafter, we involve the time variable in our study. Let $I=[0,T]$, $T>0$. Assume that $J\subset\mathring{I}$ is an open interval and $t_{0}\in{J}$ is a fixed point. Choose $\varepsilon_{0}$ so small that $\mathbb{B}(t_{0},3\varepsilon_{0})\subset{J}$. Pick an auxiliary function 
\begin{align*}
\xi\in\mathcal{D}(\mathbb{B}(t_{0},2\varepsilon_{0}),[0,1])\hspace{1em}\text{ with }\hspace{.5em} \xi|_{\mathbb{B}(t_{0},\varepsilon_{0})}\equiv{1}. 
\end{align*}
A straightforward modification of the construction in Section 3.1 now engenders a parameter-dependent transformation in terms of the time variable:
\begin{align*}
\varrho_{\lambda}(t):=t+\xi(t)\lambda,\hspace{.5em}\text{ for any }t\in{I} \text{ and } \lambda\in{\R}.
\end{align*}
It is not hard to deduce that for sufficiently small $r>0$ we have
\begin{align*}
\varrho_{\lambda}\in{\Dfi}(J)\cap{\Dfi}(I),\hspace{.5em} \text{ for any } \lambda\in\mathbb{B}(0,r).
\end{align*}
Now we define a parameter-dependent transformation involving both time and space variables, given by
\begin{align*}
{\theta_{\lambda,\mu}}(t,x):=(t+\xi(t)\lambda,x+\xi(t)\chi(x)\mu), \text{ for } (t,x)\in{J\times{U}} \text{ and }(\lambda,\mu)\in{\R}^{m+1},
\end{align*}
where $\chi$ is defined in Section 3.1 and $U$ is a given open subset in ${\R}^m$ containing $\mathbb{B}(0,3\varepsilon)$. It is a simple matter to show that ${\theta_{\lambda,\mu}}\in{\Dfi}(J\times{U})$ for any $(\lambda,\mu)\in\mathbb{B}^{m+1}(0,r)$ for sufficiently small $r$. Here and in the following, I will not distinguish between ${\B}$, $\mathbb{B}^{m}(0,r)$ and $\mathbb{B}^{m+1}(0,r)$. As long as the dimension of the ball is clear from the context, we always simply write them ${\B}$.

For $v:I\times U\rightarrow{E}$, the parameter-dependent diffeomorphism can be expressed as
\begin{center}
$\tilde{v}_{\lambda,\mu}(t,\cdot):={\tl}v(t,\cdot)=\tilde{T}_{\mu}(t){\rh}v(t,\cdot)$,\hspace{1em} where $\tilde{T}_{\mu}(t):=\theta^{\ast}_{\xi(t)\mu}$, for $t\in{I}$.
\end{center}
As before, we define the induced parameter-dependent transformation on $I\times{\M}$ as follows. Given a function $u:I\times{\M}\rightarrow{V}$, we set
\begin{align*}
{\ut}(t,\cdot):={\ttl}u(t,\cdot):={\tu}(t){\rh}u(t,\cdot),
\end{align*}
where ${\tu}(t):={\Theta}^{\ast}_{\xi(t)\mu}$ and $(\lambda,\mu)\in{\B}$. It is important to note that ${\ut}(0,\cdot)=u(0,\cdot)$ for any function $u$ and any $(\lambda,\mu)$.

In order to show smoothness of the family of parameter-dependent transformations $\{{\ttl}:(\lambda,\mu)\in\B\}$, I will first quote:
\begin{lem}{\cite[Lemma~5.1]{EscPruSim03}}
\label{quoting lemma}
Let $X$ be a Banach space. Suppose that
\begin{center}
$[\mu\mapsto{f}(\mu)]\in{C^{k}({\B},X)}$,\hspace{.5em} for\hspace{.2em} $k\in{\N}\cup\{\infty,\omega\}$.
\end{center}
Let $F(\mu)(t):=f(\xi(t)\mu)$\hspace{.2em} for \hspace{.2em}$\mu\in{\B}$ and $t\in{I}$. Then we have
\begin{center}
$[\mu\mapsto{F(\mu)}]\in{C^{k}({\B},C(I,X))}$.
\end{center}
\end{lem}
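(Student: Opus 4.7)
The plan is to exploit the simple structure of the composition. For each $t\in I$, the map $\mu\mapsto\xi(t)\mu$ is linear and sends $\B$ into itself (since $0\le\xi(t)\le 1$), so $F(\mu)(t)=f(\xi(t)\mu)$ is well-defined and its $\mu$-dependence is just $f$ precomposed with a $t$-parametrized contraction. The task is to promote the pointwise-in-$t$ chain rule (or Taylor expansion) into an identity valued in $C(I,X)$; this reduces to checking that all the relevant estimates are uniform in $t\in I$, which will always follow from continuity of $\xi$ together with compactness of $I$.

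For finite $k\in\N$ I would argue by induction on $k$, taking as the induction hypothesis that $F\in C^{k-1}$ with
\begin{align*}
\partial^{\alpha}F(\mu)(t)=\xi(t)^{|\alpha|}(\partial^{\alpha}f)(\xi(t)\mu),\qquad |\alpha|\le k-1.
\end{align*}
Existence of the next directional derivative as a limit in $C(I,X)$ follows from the pointwise chain rule together with a uniform mean-value estimate on $\xi(I)\cdot\overline{\mathbb{B}(\mu,\rho)}$, a compact subset of $\B$ on which the corresponding derivatives of $f$ are uniformly continuous; continuity of $\mu\mapsto\partial^{\alpha}F(\mu)$ into $C(I,X)$ follows by the same compactness argument. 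The case $k=\infty$ is then immediate from the finite cases.

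The main obstacle is $k=\omega$, where I would work with local Taylor series and need a uniform lower bound on radii of convergence. Fix $\mu_{0}\in\B$ and set $K:=\{\xi(t)\mu_{0}:t\in I\}$, which is a compact subset of $\B$ because $\xi$ is continuous and $I$ is compact. Since $f$ is real analytic on $\B$, a standard compactness argument furnishes constants $r_{0}>0$ and $M>0$ such that for every $\nu\in K$ the Taylor expansion
\begin{align*}
f(\nu+w)=\sum_{\alpha}\tfrac{1}{\alpha!}\partial^{\alpha}f(\nu)\,w^{\alpha}
\end{align*}
converges absolutely in $X$ for $|w|<r_{0}$, with $\|\partial^{\alpha}f(\nu)/\alpha!\|_{X}\le Mr_{0}^{-|\alpha|}$ uniformly in $\nu\in K$. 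Substituting $\nu=\xi(t)\mu_{0}$, $w=\xi(t)(\mu-\mu_{0})$ and using $|\xi(t)|\le 1$ yields
\begin{align*}
F(\mu)(t)=\sum_{\alpha}\tfrac{\xi(t)^{|\alpha|}}{\alpha!}\partial^{\alpha}f(\xi(t)\mu_{0})(\mu-\mu_{0})^{\alpha},
\end{align*}
whose coefficients $t\mapsto\xi(t)^{|\alpha|}\partial^{\alpha}f(\xi(t)\mu_{0})/\alpha!$ lie in $C(I,X)$ with norms at most $Mr_{0}^{-|\alpha|}$. The series therefore converges in $C(I,X)$ on $\mathbb{B}(\mu_{0},r_{0})$, so $F$ is real analytic at $\mu_{0}$; since $\mu_{0}\in\B$ was arbitrary, $F\in C^{\omega}(\B,C(I,X))$.
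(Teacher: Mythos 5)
The paper does not prove this lemma; it quotes it verbatim from \cite[Lemma~5.1]{EscPruSim03}, so there is no internal proof to compare against. Your argument is correct and is essentially the standard one behind the cited result: the whole content of the lemma is that $\{\xi(t)\mu':t\in I,\ \mu'\ \text{near}\ \mu_0\}$ is a compact subset of $\B$ (because $0\le\xi\le1$ and $\B$ is a ball centered at the origin), so the difference-quotient estimates for finite $k$, and the uniform Cauchy estimates $\|\partial^{\alpha}f(\nu)/\alpha!\|\le Mr_0^{-|\alpha|}$ for $k=\omega$, hold uniformly in $t$, letting the pointwise chain rule and Taylor expansion pass to $C(I,X)$. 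The only step you compress is the derivation of uniform Cauchy estimates on a compact set for a Banach-space-valued real analytic map (a finite cover plus re-expansion of the local power series), which is standard and fine to leave at that level of detail.
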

\goodbreak
\begin{prop}
\label{differentiability involving time}
Let $l\in{\N}_{0}$, $k\in{\N}_{0}\cup\{\infty,\omega\}$, and $\p\in\mathring{\M}$. 
\begin{itemize}
\item[(a)] Suppose that $u\in{C^{l+k}({\U},V)\cap{\F}^{s}(\M,V)}$, where either $s\in[0,l]$ if ${\F}\in\{bc,W_{p},H_{p}\}$, or $s=l$ if $\F=BC$. Then we have
\begin{align*}
[\mu\mapsto{\tu}u]\in{C^{k}({\B},C(I,{\F}^{s}(\M,V)))}.
\end{align*}
Moreover, $\partial^{\alpha}_{\mu}[{\tu}u]={\kb}(\xi\chi)^{|\alpha|}\tilde{T}_{\mu}\partial^{\alpha}{\kf}{\varsigma}u$\hspace{.2em} for \hspace{.2em}$|\alpha|\leq{k}$.
\item[(b)] Let $n\in{\N}_{0}$. Suppose that $\mathcal{A}:=\mathcal{A}(\boldsymbol{\mathfrak{a}})$ is a linear differential operator on $\M$ of order $l$ satisfying 
\begin{align*}
\boldsymbol{\mathfrak{a}}=(a^r)_r\in \prod_{r=0}^l BC^{n}(\M,V^{\sigma+\tau+r}_{\tau+\sigma})\cap C^{n+k}(\U,V^{\sigma+\tau+r}_{\tau+\sigma}),
\end{align*}
or equivalently for all $|\alpha|\leq l$ and $\kappa\in\mathfrak{K}$, $(a_{\alpha}^{\kappa})_{\kappa}\in l_{\infty}(\boldsymbol{BC}^{n}(\Qk,\L(E)))$ and $a_{\alpha}^{\kp}\in C^{n+k}(\varphi_{\kp}({\U}),\L(E))$ with  $\U$ defined as in Section~3.2. Then for $s\in[0,n]$ if ${\F}\in\{BC,W_{p},H_{p}\}$, or $s\in[0,n)$ if $\F=bc$ 
\begin{align*}
[\mu\mapsto{\tu}\mathcal{A}{\tui}]\in{C^{k}({\B},C(I,\mathcal{L}({\F}^{s+l}(\M,V),{\F}^{s}(\M,V))))}.
\end{align*}
\end{itemize}
\end{prop}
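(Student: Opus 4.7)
The strategy is to reduce each part to the corresponding parameter-only statement already proved (Theorem~\ref{differentiability of ttm on all function spaces} for part (a), Proposition~\ref{differentiability of differential operators-variable} for part (b)) and then invoke Lemma~\ref{quoting lemma} to promote $C^k$-dependence in a Banach space $X$ to $C^k$-dependence in $C(I,X)$. The key observation is that because $\tu(t)=\Theta^{\ast}_{\xi(t)\mu}$ and $\tilde T_\mu(t)=\theta^{\ast}_{\xi(t)\mu}$, the maps appearing in the time-dependent setting are exactly the spatial maps evaluated at the rescaled parameter $\xi(t)\mu$.

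For (a), I would define
\begin{equation*}
f:\B\to \F^s(\M,V),\qquad f(\nu):=\Theta^{\ast}_{\nu}u.
\end{equation*}
The hypotheses on $u$ are precisely those of Theorem~\ref{differentiability of ttm on all function spaces}, so $f\in C^k(\B,\F^s(\M,V))$. Since $\tu(t)u=\Theta^{\ast}_{\xi(t)\mu}u=f(\xi(t)\mu)$, setting $F(\mu)(t):=\tu(t)u$ gives $F(\mu)(t)=f(\xi(t)\mu)$, and Lemma~\ref{quoting lemma} applied with $X=\F^s(\M,V)$ yields $[\mu\mapsto\tu u]\in C^k(\B,C(I,\F^s(\M,V)))$. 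The derivative formula $\partial^\alpha_\mu[\tu u]=\kb(\xi\chi)^{|\alpha|}\ttu\partial^\alpha\kf\varsigma u$ then follows from the formula of Theorem~\ref{differentiability of ttm on all function spaces} together with the chain rule: each differentiation of $f(\xi(t)\mu)$ with respect to a component of $\mu$ produces an extra factor of $\xi(t)$, and the factors of $\chi$ come from the formula in Theorem~\ref{differentiability of ttm on all function spaces}; the remaining $\tm$ becomes $\ttu$ because it is now evaluated at $\xi(t)\mu$.

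For (b), I would apply the same trick at the operator level. Define
\begin{equation*}
g:\B\to \mathcal{L}(\F^{s+l}(\M,V),\F^s(\M,V)),\qquad g(\nu):=\Theta^{\ast}_\nu\mathcal{A}\Theta^\nu_{\ast}.
\end{equation*}
The coefficient hypotheses on $\mathcal A$ are exactly those of Proposition~\ref{differentiability of differential operators-variable}(a), so $g\in C^k(\B,\mathcal{L}(\F^{s+l}(\M,V),\F^s(\M,V)))$. Because $\tu(t)\mathcal{A}\tui(t)=\Theta^{\ast}_{\xi(t)\mu}\mathcal{A}\Theta^{\xi(t)\mu}_{\ast}=g(\xi(t)\mu)$, Lemma~\ref{quoting lemma} with $X=\mathcal{L}(\F^{s+l}(\M,V),\F^s(\M,V))$ (which is indeed a Banach space) delivers the desired conclusion $[\mu\mapsto\tu\mathcal{A}\tui]\in C^k(\B,C(I,\mathcal{L}(\F^{s+l}(\M,V),\F^s(\M,V))))$.

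The proof is essentially bookkeeping once Lemma~\ref{quoting lemma} is in hand, so there is no substantial obstacle. The one minor point requiring care is the derivative formula in (a): one must track the chain-rule factors $\xi(t)$ that accumulate per order of differentiation, and verify that the composition with $\kb$ and multiplication by $\varsigma$ commute with differentiation in $\mu$ because they are independent of the parameter. All function-space mapping properties needed to interpret the formula (boundedness of $\kb$, $\kf$, multiplication by $\varsigma$ and $1_\M-\varsigma$) are already recorded in Lemma~\ref{main lemma 3.1}, Remark~\ref{explanation of the main lemma} and Proposition~\ref{pointwise multiplication properties}.
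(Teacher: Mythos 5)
Your proposal is correct and follows exactly the paper's argument: set $f(\mu)=\ttm u$ (resp. $f(\mu)=\Al$) with $X=\F^s(\M,V)$ (resp. $X=\mathcal{L}(\F^{s+l}(\M,V),\F^s(\M,V))$), invoke Theorem~\ref{differentiability of ttm on all function spaces} (resp. Proposition~\ref{differentiability of differential operators-variable}), and apply Lemma~\ref{quoting lemma} via the identity $\tu(t)=\Theta^{\ast}_{\xi(t)\mu}$. Your additional chain-rule bookkeeping for the derivative formula is a correct elaboration of what the paper leaves implicit.
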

\begin{proof}
(a) Set $X={\F}^{s}(\M,V)$, $f(\mu)={\ttm}u$. Now (a) is a direct consequence of Proposition~\ref{ttm is a diffeomorphism}, Theorem \ref{differentiability of ttm on all function spaces} and Lemma \ref{quoting lemma}.

(b) Set $X=\mathcal{L}({\F}^{s+l}(\M,V),{\F}^{s}(\M,V))$ and $f(\mu)={\Al}$. The assertion follows by Proposition \ref{differentiability of differential operators-variable} and Lemma \ref{quoting lemma}.
\end{proof}
For ${\F}\in\{bc,W_{p},H_{p}\}$ and $l\in{\N}$, we put
\begin{align*}
{\ef}(I):=C^{1}(I,{\F}^{s}(\M,V))\cap{C(I,{\F}^{s+l}(\M,V))},
\end{align*}
or 
\begin{align*}
{\ef}(I):=W^{1}_{p}(I,{\F}^{s}(\M,V))\cap{L_{p}(I,{\F}^{s+l}(\M,V))}.
\end{align*}
\begin{prop}
\label{transformed functions}
Let $l\in\N$. Suppose that $u\in {\ef}(I)$. Then ${\ut}\in {\ef}(I)$. Moreover, there exists some ${B}_{\lambda,\mu}$ satisfying
\begin{align}
\label{S3:blm-reg}
[(\lambda,\mu)\mapsto {B}_{\lambda,\mu}]\in{C^{\omega}({\B},C(I,\mathcal{L}({\F}^{s+l}(\M,V),{\F}^{s}(\M,V))))}
\end{align}
such that
\begin{align*}
\partial_{t}[{\ut}]=(1+\xi^{\prime}\lambda){\ttl}u_{t}+{B}_{\lambda,\mu}({\ut}).
\end{align*}
In particular, $B_{\lambda,0}=0$.
\end{prop}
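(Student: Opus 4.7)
The plan is to differentiate the identity $\ut(t,\cdot)=\tu(t)\,\rh u(t,\cdot)$ in $t$, read off $B_{\lambda,\mu}$ from the product rule, and reduce the stated regularity to results already in hand. Formally,
\[
\partial_t \ut = (\partial_t\tu(t))\,\rh u + \tu(t)\,\partial_t[\rh u] = (\partial_t \tu(t))\,\tui(t)\,\ut + (1+\xi'\lambda)\,\ttl u_t,
\]
which forces $B_{\lambda,\mu}(t):=(\partial_t\tu(t))\circ\tui(t)$. Since $\tu(t)=\id$ when $\mu=0$, this immediately gives $B_{\lambda,0}=0$; note that $B_{\lambda,\mu}$ is in fact independent of $\lambda$, and the subscript is retained only for notational uniformity.

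The core technical step is to verify \eqref{S3:blm-reg}. In the chart $(\Uki,\varphi_\iota)$, the defining formula $\tu(t)w = \kb\,\theta^{\ast}_{\xi(t)\mu}\,\kf(\Xt w)+(1_\M-\Xt)w$ together with the pointwise identity
\[
\partial_t[\theta^{\ast}_{\xi(t)\mu}v](x)=\xi'(t)\chi(x)\sum_j\mu_j\,(\theta^{\ast}_{\xi(t)\mu}\partial_j v)(x)
\]
expresses $\partial_t\tu(t)$ as $\xi'(t)$ times a first-order differential operator post-composed with $\tu(t)$, whose coefficients depend polynomially on $\mu$ and analytically on $\mu$ through the truncated shift $\theta^{\ast}_{\xi(t)\mu}$. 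Composing with $\tui(t)$ accordingly writes $B_{\lambda,\mu}(t)$ as a parameter-dependent first-order differential operator of exactly the type treated in Proposition~\ref{differentiability of differential operators-variable}. Applying that proposition with $k=\omega$ yields real-analytic dependence of $B_{\lambda,\mu}$ on $\mu$ with values in $\L(\F^{s+l}(\M,V),\F^s(\M,V))$, and Lemma~\ref{quoting lemma} then promotes this analyticity from $\B$ to $C(I,\L(\F^{s+l}(\M,V),\F^s(\M,V)))$ by absorbing the substitution $\mu\mapsto\xi(t)\mu$; the additional scalar factor $\xi'\in C(I)$ enters only as a continuous multiplier.

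Granted this, $\ut\in\ef(I)$ is read off directly. The spatial regularity $\ut\in C(I,\F^{s+l}(\M,V))$ or $L_p(I,\F^{s+l}(\M,V))$ follows from Proposition~\ref{ttm is a diffeomorphism}(a) applied fibrewise in $t$, together with the elementary fact that the time-diffeomorphism $\varrho_\lambda$ preserves both $C(I,X)$ and $L_p(I,X)$ for any Banach space $X$. The time regularity of $\ut$ is obtained from the displayed identity above: the first summand $(1+\xi'\lambda)\,\ttl u_t$ lies in $C(I,\F^s(\M,V))$ or $L_p(I,\F^s(\M,V))$ by Proposition~\ref{ttm is a diffeomorphism}(b) applied to $u_t$, and the second summand $B_{\lambda,\mu}\ut$ lies in the same class by \eqref{S3:blm-reg} combined with the $\F^{s+l}$-valued membership of $\ut$ already established.

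The step I expect to be the main obstacle is verifying \eqref{S3:blm-reg}: packaging $(\partial_t\tu(t))\circ\tui(t)$ cleanly enough so that its real analyticity in $(\lambda,\mu)$, uniform in $t\in I$, follows from Proposition~\ref{differentiability of differential operators-variable} and Lemma~\ref{quoting lemma} without redoing from scratch the Euclidean parameter-dependent computations of \cite{EscPruSim03}. The saving grace is that the $\lambda$-dependence is trivial, the $\mu$-dependence is exactly the structure already handled, and the factor $\xi'(t)$ travels along benignly under Lemma~\ref{quoting lemma}.
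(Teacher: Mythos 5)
Your proposal matches the paper's proof in all essentials: the same product-rule differentiation of $\tu(t)\,\rh u$, the same explicit formula exhibiting $B_{\lambda,\mu}$ as $\xi'$ times $\sum_j\mu_j$ times a conjugated, localized first-order operator (which the paper packages as a global operator $\mathcal{A}^j$ with $\mathcal{A}^j_{\iota}=\zeta\partial_j$), and the same reduction of \eqref{S3:blm-reg} to Proposition~\ref{differentiability of differential operators-variable} combined with Lemma~\ref{quoting lemma} (i.e.\ Proposition~\ref{differentiability involving time}) and pointwise multiplication. The only cosmetic difference is that the paper obtains $\ut\in\ef(I)$ directly from \cite[Proposition~5.3]{EscPruSim03} together with Lemma~\ref{main lemma 3.1}, whereas you recover the time regularity a posteriori from the differentiated identity after establishing the spatial regularity fibrewise.
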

\begin{proof}
(i) Since $I$ is a uniformly regular Riemannian manifold, for any Banach space $X$, an analogous result to Proposition \ref{ttm is a diffeomorphism} holds for $BC^{l}(I,X)$ and $W^{l}_{p}(I,X)$, that is to say, ${\rh}\in{\Lis}({\F}(I,X))$ and $[{\rh}]^{-1}=\varrho^{\lambda}_{\ast}$ for $\lambda\in{\B}$ and ${\F}\in\{BC^{l},W^{l}_{p}\}$. Moreover, there exists $M>0$ such that $\|{\rh}\|_{\mathcal{L}({\F}(I,X))}\leq{M}$ for any $\lambda\in{\B}$. See \cite[Section~2]{EscPruSim03} for more details of the proof. We conclude that
\begin{center}
${\rh}u\in{\ef}(I)$\hspace{.5em} and\hspace{.5em} $\partial_{t}{\rh}u=(1+\xi^{\prime}\lambda){\rh}\partial_{t}u$,\hspace{.5em} for\hspace{.2em} $u\in{\ef}(I)$,
\end{center}

(ii) By definition, $\ut={\kb}\tilde{T}_{\mu}{\kf}{\varsigma}{\rh}u+(1_{\M}-\varsigma){\rh}u$. \eqref{S3: multiplication-varsigma 1} implies that $(1_{\M}-\varsigma){\rh}u\in{\ef}(I)$. For the same reason, one checks that 
\begin{align*}
{\varsigma}{\rh}u\in{\mathbb{E}}_{1}^{\cp}(I)\in\{C^{1}(I,{\F}^{s,\M}_{\cp})\cap{C(I,{\F}^{s+l,\M}_{\cp})},W^{1}_{p}(I,{\F}^{s,\M}_{\cp})\cap{L_{p}(I,{\F}^{s+l,\M}_{\cp})}\}.
\end{align*}
By \cite[Proposition~5.3]{EscPruSim03} and Lemma \ref{main lemma 3.1}, we immediately have that
\begin{align*}
{\kb}\tilde{T}_{\mu}{\kf}{\varsigma}{\rh}u \in\mathbb{E}^{\cp}_{1}(I)\subset{{\ef}(I)},
\end{align*}
which yields ${\ttl}({\ef}(I))\subset{{\ef}(I)}$. 

(iii) With either choice of $\ef(I)$, the time derivative of ${\ut}$ can be computed as
\begin{align*}
&\quad \partial_{t}[{\ut}]={\kb}(\partial_{t}\tilde{T}_{\mu}{\kf}{\varsigma}{\rh}u)+(1_{\M}-\varsigma)\partial_{t}({\rh}u)\\
&={\kb}\tilde{T}_{\mu}{\kf}[{\varsigma}\partial_{t}({\rh}u)]+\sum_{j}{\kb}[\xi^{\prime}\chi\mu_{j}\tilde{T}_{\mu}\partial_{j}({\kf}{\varsigma}{\rh}u)]+(1_{\M}-\varsigma)\partial_{t}({\rh}u)\\
&={\kb}\tilde{T}_{\mu}{\kf}[{\varsigma}(1+\xi^{\prime}\lambda){\rh}u_{t}]+\sum_{j}{\kb}[\xi^{\prime}\chi\mu_{j}\tilde{T}_{\mu}\partial_{j}({\kf}{\varsigma}{\tui}{\ut})]\\
&\hspace*{1em}+(1_{\M}-\varsigma)(1+\xi^{\prime}\lambda){\rh}u_{t}\\
&=(1+\xi^{\prime}\lambda){\ttl}u_{t}+{B}_{\lambda,\mu}({\ut}){.}
\end{align*}
In the last step, by definition of $\tu$, we get 
\begin{align*}
{\kb}\tilde{T}_{\mu}{\kf}[{\varsigma}(1+\xi^{\prime}\lambda){\rh}u_{t}]+(1_{\M}-\varsigma)(1+\xi^{\prime}\lambda){\rh}u_{t}=(1+\xi^{\prime}\lambda){\ttl}u_{t}.
\end{align*}
We can write ${B}_{\lambda,\mu}$ in an explicit way as
\begin{align*}
{B}_{\lambda,\mu}(\cdot)=\sum_{j}{\kb}[\xi^{\prime}\chi\mu_{j}\tilde{T}_{\mu}\partial_{j}({\kf}{\varsigma}{\tui}\cdot)]=\sum_{j}\mu_{j}\xi^{\prime}{\chi_{\iota}}({\tu}\mathcal{A}^{j}{\tui})(\cdot),
\end{align*}
where $\mathcal{A}^{j}$ is a first order linear differential operator compactly supported in $\Uki$ such that $\mathcal{A}_{\iota}^{j}=\zeta\partial_{j}$ and $a_{\alpha}^{\kappa}\in l_{\infty}({\boldsymbol{BC}^{[s]+1}(\mathbb{B},\L(E)))}$. Such $\mathcal{A}^{j}$ can be obtained by means of the recursive construction in \cite[Section~2.3]{ShaoSim13}. So by Proposition \ref{differentiability involving time}, 
\begin{align*}
[\mu\mapsto {\tu}\mathcal{A}^{j}{\tui}]\in{C^{\omega}({\B},C(I,\mathcal{L}({\F}^{s+l}(\M,V),{\F}^{s}(\M,V))))}. 
\end{align*}
Now \eqref{S3:blm-reg} follows from Proposition~\ref{pointwise multiplication properties}.
\end{proof}
\begin{remark}
Any theorem in this section can be formulated with $\M$ being a $C^k$-uniformly regular Riemannian manifold as long as $k$ is no smaller than the highest order of function spaces appearing in that theorem.
\end{remark}

Before closing this Section, we give a proof for the main theorem:
\begin{proof}
(of Theorem~\ref{S1: Theorem}) We can define a new manifold $\mathcal{M}:={I}\times{\M}$. Whenever $\M$ is a $C^k$-uniformly regular Riemannian manifold, $\mathcal{M}$ is also one.  On $\mathcal{M}$, the family $\{\ttl:(\lambda,\mu)\in\B \}$ of diffeomorphisms plays a similar role to $\{\ttm:\mu\in\B \}$ on $\M$. Then the proof of Theorem~\ref{main theorem-iff} can be carried out in the new settings by a minute modification with $\tilde{\gamma}_{\p}$ replaced by $\tilde{\gamma}_{(t_0,\p)}$.
\end{proof}
\begin{rmk}
(a) The above proof implies that  the family of diffeomorphisms $\{\ttl:(\lambda,\mu)\in\B \}$ possesses all the properties established in Section 3.1-3.3.
\smallskip\\
(b) The proof of Lemma~\ref{main lemma 3.1} does not rely on the structure of a uniform regular atlas. This fact shows that the family $\{\ttm:\mu\in\B\}$ can be defined by using any $C^{\infty}$-compatible atlas with only slight modifications so that the results in this section remain valid.
\smallskip\\
(c) Even with the presence of real analyticity, namely, $k=\omega$, in order to establish the theorems in this section, it is sufficient to assume that $\M$ is a uniformly regular Riemannian manifold.
\smallskip\\
In fact, by a well-known theorem of H. Whitney, every $C^{\infty}$-Riemannian manifold admits a compatible real analytic atlas $\hat{\mathfrak{A}}=(\mathsf{O}_{\hat{\kappa}},\varphi_{\hat{\kappa}})_{\hat{\kappa}\in\hat{\mathfrak{K}}}$. We use the convention that $u\in C^{\omega}(\M,V)$ if it is real analytic in terms of $\hat{\mathfrak{A}}$. Around any given $\p \in\mathring{\M}$, we can pick a subset $S\subset\hat{\mathfrak{K}}$ so that $\p\in \mathsf{O}_{\hat{\eta}}$ for some $\hat{\eta}\in S$ and 
\begin{align*}
\{(\Uk,\varphi_{\kappa}):\kappa\in(\mathfrak{K}\setminus\mathfrak{N}(\kp))\}\cup  \{(\mathsf{O}_{\hat{\kappa}},\varphi_{\hat{\kappa}}): \hat{\kappa}\in S\}
\end{align*}
is still a uniformly regular atlas for $\M$, after possibly normalizing the local patches $(\mathsf{O}_{\hat{\kappa}},\varphi_{\hat{\kappa}})_{\hat{\kappa}\in S}$. It yields a new atlas, which is still denoted by $\mathfrak{A}$. An important observation is that Proposition~\ref{S2: Retraction of SB}, \ref{S2: retraction of H-LH} and Lemma~\ref{main lemma 3.1} still hold with the modified atlas. Moreover, we can define functions belonging to $C^{\omega}(\U,V)$ in terms of $\mathfrak{A}$. Note that the topology of $\F^s(\M,V)$ is independent of the choice of the atlas. One can check that all the theorems in Section~3 remain true with respect to the new atlas. 
\end{rmk}
\medskip

\section{\bf The Ricci-DeTurck Flow}

We first look at the Ricci flow formulated by R. Hamilton. The Ricci flow deforms the metric tensor $g$ of a $m$-dimensional compact closed manifold by the law:
\begin{align}
\label{Ricci flow equation 0}
\begin{cases}
\partial_t g=-2\Ric(g),\\
g(0)=g_0,
\end{cases}
\end{align}
where $\Ric(g)$ is the Ricci tensor of the metric $g$ and $g_0$ is a metric. We will treat equation \ref{Ricci flow equation 0} in its modified version formulated in B. Chow and D. Knopf \cite{Chow04}. The authors show therein the equivalence of their formulation for the Ricci-DeTurck flow to the work of D. DeTurck. It will be shown in this section that on a compact $C^{\infty}$-Riemannian manifold $(\M,\tilde{g})$ the solution to the Ricci flow \ref{Ricci flow equation 0} is analytic in time with respect to the $BC(\M,T^{0}_{2}\M)$-topology. As is known, $(\M,\tilde{g})$ admits a compatible real analytic structure. It follows from \cite{Morrey58} that there is a real analytic metric on $\M$. Without loss of generality, we may assume that $\mathfrak{A}$ and $\tilde{g}$ are real analytic.

We will introduce some notations and concepts prior to the analysis of the Ricci flow. For some fixed interval $I=[0,T]$, $\gamma\in(0,1]$ and Banach space $X$, we define
\begin{align*}
&BU\!C_{1-\gamma}(I,X):=\{u\in{C(\dot{I},X)};[t\mapsto{t^{1-\gamma}}u]\in{BU\!C(\dot{I},X)},\lim\limits_{t\to{0^+}}{t^{1-\gamma}}\|u\|=0\},\\
& \|u\|_{C_{1-\gamma}}:=\sup_{t\in{\dot{I}}}{t^{1-\gamma}}\|u(t)\|_{X},
\end{align*}
and
\begin{align*}
BU\!C_{1-\gamma}^1(I,X):=\{u\in{C^1(\dot{I},X)}: u,\dot{u}\in{BU\!C_{1-\gamma}(I,X)}\}.
\end{align*}
In particular, we put
\begin{align*}
BU\!C_0(I,X):=BU\!C(I,X)\hspace*{1em}\text{ and }\hspace*{1em} BU\!C^1_0(I,X):=BU\!C^1(I,X).
\end{align*}
In addition, if $I=[0,T)$ is a half open interval, then
\begin{align*}
&C_{1-\gamma}(I,X):=\{v\in{C(\dot{I},X)}:v\in{BU\!C_{1-\gamma}([0,t],X)},\hspace{.5em} t<T\},\\
&C^1_{1-\gamma}(I,X):=\{v\in{C^1(\dot{I},X)}:v,\dot{v}\in{C_{1-\gamma}(I,X)}\}.
\end{align*}
We equip these two spaces with the natural Fr\'echet topology induced by the topology of $BU\!C_{1-\gamma}([0,t],X)$ and $BU\!C_{1-\gamma}^1([0,t],X)$, respectively. 
\smallskip\\
Let $l\in{\N}_0$ and $\ev:\Gamma(\M,V^{\sigma+\tau+l}_{\tau+\sigma}\times V^0_l) \rightarrow \Hom(V)$ be the complete contraction.
A linear differential operator $\mathcal{A}(\boldsymbol{\mathfrak{a}}):=\sum\limits_{r=0}^l \ev(a^r,\nabla^r \cdot)$ on the compact closed manifold ${\M}$ is called a \emph{normally elliptic operator of order $l$},
if its principal symbol 
\begin{align*}
\hat{\sigma}\mathcal{A}^{\pi}(\p,\xi(\p)):=\ev(a^l,(-i\xi)^{\otimes l})(\p)\in \L(T_{\p}\M^{\otimes\sigma}\otimes T_{\p}^{\ast}\M^{\otimes\tau})
\end{align*}
satisfies that for all $(\p,\xi)\in \M\times\Gamma(\M, V^0_1)$ with $|\xi|_{g^{\ast}}=1_{\M}$
\begin{align*}
\mathbb{C}^+:=\{z\in\mathbb{C}:Re(z)\geq 0\} \subset\rho(-\hat{\sigma}\mathcal{A}^{\pi}(\p,\xi(\p))).
\end{align*}
This condition can be equivalently stated as follows. $\mathcal{A}$ is normally elliptic iff there exist $0<r<R$ such that the spectrum of the principal symbols of its local expressions $\mathcal{A}_{\kappa}(x,\partial)$, i.e., $\hat{\sigma}\mathcal{A}^{\pi}_{\kappa}(x,\xi):=\Sigma_{|\alpha|=l}a_{\alpha}(x)(-i\xi)^{\alpha}\in\L(E)$ with $(x,\xi)\in \Q\times S^{m-1}$, are contained in 
\begin{align*}
\{z\in\mathbb{C}:Re(z)\geq r\}\cap \{z\in\mathbb{C}:|z|\leq R\}, \hspace*{.5em}\text{for all }(x,\xi)\in \Q\times S^{m-1}.
\end{align*}

For notational convenience, we set 
\begin{align*}
E_0:=bc^{\alpha}({\M},V),\hspace{1em} and\hspace{1em} E_1:=bc^{2+\alpha}({\M},V), 
\end{align*}
and
\begin{align*}
S\!E_0:=bc^{\alpha}({\M},SV),\hspace{1em}\text{ and }\hspace{1em} S\!E_1:=bc^{2+\alpha}({\M},SV) 
\end{align*}
for some $0<\alpha<1$ and $V=V^0_2:=\{T^0_2\M, (\cdot|\cdot)_{\tilde{g}}\}$. $SV$ stands for the symmetric sections of the $(0,2)$-tensor bundle, namely that $a=a_{ij}dx^i\otimes dx^j\in\Gamma(\M,SV)$ iff $a_{ij}(\p)=a_{ji}(\p)$ for every $\p\in\M$. Lastly, observe $E=\R^{1\times m^2}=\R^{m^2}$ and 
\begin{align*}
\mho:=\{g\in S\!E_1: g \text{ is positive definite}\}.
\end{align*}
Here $g$ is said to be positive definite if there exists a constant $c>0$ such that 
\begin{align*}
\langle{g(p)X},X\rangle \geq c|X|_{\tilde{g}(p)}^2
\end{align*}
at all $\p \in\M$ and $X\in T_{\p} \M$. Here $\langle{\cdot,\cdot}\rangle:T^{\ast}{\M}\times T{\M} \rightarrow \R^{\M}$ is the fiber-wise defined duality pairing on ${\M}$.
\smallskip\\
In the first place, I will present DeTurck's trick in getting a modified formulation for equation \eqref{Ricci flow equation 0}. For any sufficiently smooth metric $g$, we can define a global vector field $W_g=W^k_g\frac{\partial}{\partial x^k}$ on $\M$ by 
\begin{align*}
W^k_g=g^{pq}(\Gamma^k_{pq}-\tilde{\Gamma}^k_{pq}), 
\end{align*}
where $\tilde{\Gamma}^k_{pq}$ are the Christoffel symbols of the fixed real analytic background metric $\tilde{g}$. Note that $W_g$ is a well-defined global vector field, since the difference of two connections is a $(1,2)$-tensor.
Taking the Lie derivative of the metric tensor $g$ with respect to $W_g$ induces a map $P$: ${\mathcal{ST}}^0_2{\M}\rightarrow {\mathcal{ST}}^0_2{\M}$, namely,
\begin{align*}
P(g)=\L_{W_g} g,
\end{align*}
which is a second order nonlinear differential operator acting on $g$. Here ${\mathcal{ST}}^0_2{\M}$ denotes the smooth sections of $SV$. 
\smallskip\\
For the sake of working with a fixed atlas independent of time, henceforth we will treat equation \eqref{Ricci flow equation 0} in the atlas $\mathfrak{A}$ instead of the commonly used geodesic normal coordinates with respect to the evolving metric $g(t)$. 
\smallskip\\
Let $S\!E:=\{a\in E: a_{ij}=a_{ji}\}$ with the subscripts ordered lexicographically as aforementioned. Then $u\in \Gamma(\M,SV)$ iff ${\kfk}u\in S\!E^{\Q}$ for all $\kappa\in\mathfrak{K}$. We have 
\begin{align}
\label{section 7: invariance of SV under tu}
{\ttm}(\Gamma(\M, SV))=\Gamma(\M, SV).
\end{align}
An immediate observation is that the spaces $\F^s(\M,SV)$ and $\F^s(\R^m,S\!E)$ are closed in $\F^s(\M,V)$ and $\F^s(\R^m,E)$ with $\F\in\{bc,BC\}$, respectively, and thus are Banach spaces. It is also easy to see that Proposition~\ref{S2: retraction of H-LH} still holds true with $V$ and $E$ replaced by $SV$ and $S\!E$, respectively. Hereafter, I will use some of the results established in the previous sections and \cite{ShaoSim13} with the minute modification that $V$ and $E$ are replaced by $SV$ and $S\!E$. One may check that their proofs remain true unless the necessary modifications are pointed out.
\smallskip\\
In the following, we seek a solution to the Ricci-DeTurck flow:
\begin{align}
\label{Ricci-DeTurck flow equation 1}
\begin{cases}
\partial_t g=-2\Ric(g)+P(g):=Q(g),\\
g(0)=g_0,
\end{cases}
\end{align}
where $g_0$ is the initial metric of $\M$ in \eqref{Ricci flow equation 0}. In \cite{DeTur83}, the author shows that for any smooth initial metric $g_0$, the initial-value problem \eqref{Ricci-DeTurck flow equation 1} has a unique smooth solution $g(t)$ existing on $J(g_0):=[0,T(g_0))$ for some $T(g_0)>0$.

We will adopt an implicit function theorem argument below to show analyticity of $g$. To this end, we first use continuous maximal regularity theory established in \cite{ShaoSim13} to give an existence theorem for equation \eqref{Ricci-DeTurck flow equation 1} with $bc^{2+\alpha}$-continuous initial data.
Let $(Q(g))_{ij}$ be the components of $Q(g)$ and $(Q_{\kappa}(g))_{ij}:=({\kfk}Q(g))_{ij}$. In every local chart $(\Uk,\varphi_{\kappa})$, it holds that
\begin{align}
\label{formula of Q-Ricci}
\notag\partial_t g_{ij}&=(Q_{\kappa}(g))_{ij}=-2R_{ij}+P_{ij}\\
\notag&=-2(\partial_k\Gamma^{k}_{ij}-\partial_j\Gamma^{k}_{ik}+\Gamma^{k}_{kl}\Gamma^{l}_{ij}-\Gamma^{k}_{lj}\Gamma^{l}_{ik})\\
\notag&\hspace*{1em}+\nabla_{W_g}g_{ij}-(\partial_i|\nabla_{W_g}\partial_j)_g-(\nabla_{W_g}\partial_i|\partial_j)_g+(\partial_i|\nabla_{j}W_g)_g+(\nabla_{i}W_g|\partial_j)_g\\
\notag&=-2(\partial_k\Gamma^{k}_{ij}-\partial_j\Gamma^{k}_{ik}+\Gamma^{k}_{kl}\Gamma^{l}_{ij}-\Gamma^{k}_{lj}\Gamma^{l}_{ik})
    +W^k_g\partial_{k}g_{ij}+g_{kj}\partial_{i}W^k_g+g_{ik}\partial_{j}W^k_g\\
&=g^{kl}\partial_{kl}g_{ij}+\mathcal{S}_{\kappa, ij}(g).
\end{align}
$[g\mapsto \mathcal{S}_{\kappa,ij}(g)]\in C^{\omega}(bc^{2+\alpha}(\Q,S\!E),bc^{1+\alpha}(\Q))\cap C^{\omega}(bc^{1+\alpha}(\Q,S\!E),bc^{\alpha}(\Q))$ is a rational function involving the components of $g$ and their first order derivatives with $BC^{\infty}\cap C^{\omega}$-coefficients. Here $\nabla_{i}:=\nabla_{\partial_{i}}$ and $\nabla_{W_g}=W^i_g \nabla_i$. The above computation shows that $[g\mapsto Q_{\kappa}(g)]\in C^{\omega}(bc^{2+\alpha}(\Q,S\!E),bc^{\alpha}(\Q,S\!E))$. We attain
\begin{align*}
Q \in C^{\omega}(\mho, S\!E_0).
\end{align*}
The symbol of the principal part of $(-DQ(g))_{\kappa}$, the local expression for the Fr\'echet derivative of $-Q(g)$ in the local patch $(\Uk,\varphi_{\kappa})$, equals 
\begin{align*}
\hat{\sigma}(-DQ(g))_{\kappa}^{\pi}(x,\xi)=g^{kl}(x)\xi_k \xi_l I_{\R^{m^2}},\hspace*{.5em} (x,\xi)\in \Q\times S^{m-1}.
\end{align*}
The positive definiteness of $g$ immediately implies that $-DQ(g)$ is normally elliptic for each $g\in \mho$. Because $DQ(g) \in \L(S\!E_1,S\!E_0)$, an inspection into the proof of \cite[Theorem~3.2]{ShaoSim13} reveals that it still works for the spaces $S\!E_0$ and $S\!E_1$. Thus by \cite[Theorem~3.4]{ShaoSim13}, we conclude that for each $g\in \mho$,
\begin{align*}
-DQ(g) \in \mathcal{H}(S\!E_1,S\!E_0). 
\end{align*}
It follows from \cite[Theorem~3.6]{ShaoSim13} that 
\begin{align}
\label{MR of RF}
-DQ(g) \in \mathcal{M}_{1}(S\!E_1,S\!E_0), \hspace*{1em} g\in \mho.
\end{align}
\cite[Theorem~2.7]{Ange90} and a prolongation of the solution as in \cite[Theorem~4.1]{CleSim01} imply the following existence theorem.
\begin{theorem}
\label{existence for RF-Ange90}
Suppose that $g_0\in \mho:=\{g\in bc^{2+\alpha}(\M,SV): g \text{ positive definite}\}$ with some fixed $\alpha\in (0,1)$. Then equation \eqref{Ricci-DeTurck flow equation 1} has a unique maximal solution 
\begin{align*}
\hat{g}\in C^{1}(J(g_0),bc^{\alpha}(\M,SV)) \cap C(J(g_0),\mho)
\end{align*}
on the maximal interval of existence $J(g_0):=[0,T(g_0))$ with some $T(g_0)>0$. 
\end{theorem}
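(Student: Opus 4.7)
The proof is a direct application of the abstract quasilinear parabolic existence theorem of Angenent \cite[Theorem~2.7]{Ange90}, followed by the standard prolongation procedure of \cite[Theorem~4.1]{CleSim01}. Before invoking either, I would record two structural facts about the setup. First, $\mho$ is open in $S\!E_1$: because $\M$ is compact and positive-definiteness of $g$ with respect to $\tilde g$ is equivalent to a uniform lower bound $c>0$ on the $\tilde g$-eigenvalues, this condition is preserved under sufficiently small $bc^{2+\alpha}$-perturbations. Second, the explicit local formula \eqref{formula of Q-Ricci} and the $C^{\omega}$-regularity of the lower-order terms $\mathcal{S}_{\kappa,ij}$ in the $bc^{2+\alpha}$-topology already yield $Q\in C^{\omega}(\mho,S\!E_0)$; in particular, the map $[g\mapsto -DQ(g)]$ belongs to $C^{\omega}(\mho,\mathcal{L}(S\!E_1,S\!E_0))$, which takes care of the continuity/Lipschitz hypothesis on the linearized operator.

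The key hypothesis for Angenent's theorem is continuous maximal regularity of the linearization at each admissible base point, and this is precisely what is supplied by \eqref{MR of RF}: $-DQ(g)\in\mathcal{M}_1(S\!E_1,S\!E_0)$ for every $g\in\mho$. With the quasilinear Cauchy problem rewritten in the standard form $\dot g + A(g)g = F(g)$, $g(0)=g_0$, all the hypotheses of \cite[Theorem~2.7]{Ange90} are met at the admissible initial datum $g_0\in\mho$. This produces a time $\tau=\tau(g_0)>0$ and a unique local solution
\[
\hat g\in C^1([0,\tau],bc^{\alpha}(\M,SV))\cap C([0,\tau],bc^{2+\alpha}(\M,SV))
\]
with $\hat g(t)\in\mho$ for all $t\in[0,\tau]$.

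Finally, I would upgrade to a maximal solution via the standard prolongation argument of Clément--Simonett: starting the local existence theorem afresh from each $\hat g(t_\ast)\in\mho$ and using uniqueness to patch, one defines $T(g_0)$ as the supremum of all times reachable in this way and obtains a unique maximal solution on $J(g_0)=[0,T(g_0))$ with the stated regularity. The only real work is the bookkeeping in casting the problem in quasilinear form and verifying openness of $\mho$; the principal obstacle would have been the continuous maximal regularity of the linearization, but that was already dispatched in \eqref{MR of RF}, so the remaining steps are routine citations.
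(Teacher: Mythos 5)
Your proposal matches the paper's argument essentially verbatim: the paper's entire proof is the remark preceding the theorem that \cite[Theorem~2.7]{Ange90} together with the prolongation procedure of \cite[Theorem~4.1]{CleSim01} yields the result, with the preparatory work being exactly the facts you cite ($Q\in C^{\omega}(\mho,S\!E_0)$ from \eqref{formula of Q-Ricci}, normal ellipticity, and the maximal regularity \eqref{MR of RF}). The only cosmetic difference is that the paper treats \eqref{Ricci-DeTurck flow equation 1} as a fully nonlinear problem $\partial_t g=Q(g)$ with maximal regularity imposed on the linearization $-DQ(g)$, rather than recasting it in quasilinear form as you suggest, but this does not affect the argument.
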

\smallskip
Henceforth, $\hat{g}$ is used exclusively for the solution to equation \eqref{Ricci-DeTurck flow equation 1} obtained in Theorem \ref{existence for RF-Ange90}. Given any $(t_0,\p)\in\dot{J}(\rho_0)\times{\M}$, we introduce a family of parameter-dependent diffeomorphisms $\{{\ttl}:(\lambda,\mu)\in\B\}$ around $(t_0,\p)$ for sufficiently small $r>0$. Let $J:=[0,T]\subset J(\rho_0)$ with $t_0\in \mathring{J}$. Define 
\begin{align*}
{\ez}(J):=C(J,E_0), \hspace*{1em} {\ef}(J):=C(J,E_1)\cap{C^1(J,E_0)},
\end{align*}
and
\begin{align*}
{\sez}(J):=C(J,S\!E_0), \hspace*{1em} {\sef}(J):=C(J,S\!E_1)\cap{C^1(J,S\!E_0)},
\end{align*}
and set 
\begin{align*}
{\Ub}(J):=\{u\in\sef(J): u(t)\in \mho,\text{ for all } t\in J\}.
\end{align*}
Clearly, ${\Ub}(J)$ is open in $\sef(J)$. Let $u:=\hat{g}_{\lambda,\mu}=\ttl{\hat{g}}$. Then by Proposition~\ref {transformed functions}, $u$ satisfies the equation
\begin{align*}
\label{transformed RDF}
\partial_t u&=(1+\xi^{\prime}\lambda){\ttl}\hat{g}_t+{B}_{\lambda,\mu}(u)\\
&=(1+\xi^{\prime}\lambda){\ttl}Q(\hat{g})+{B}_{\lambda,\mu}(u)\\
&=(1+\xi^{\prime}\lambda){\tu}Q({\rh}\hat{g})+{B}_{\lambda,\mu}(u)\\
&=(1+\xi^{\prime}\lambda){\tu}Q({\tui}u)+{B}_{\lambda,\mu}(u):=-H_{\lambda,\mu}(u).
\end{align*}
We define a map $\Phi:{\Ub}(J) \times \B \rightarrow {\sez}(J)\times S\!E_1$ by:
\begin{align}
\Phi(g,(\lambda,\mu))=\dbinom{\partial_t g+H_{\lambda,\mu}(g)}{\gamma_{0}(g)-\hat{g}(0)}
\end{align}
with $\gamma_{0}$ standing for the evaluation map at $t=0$, i.e., $\gamma_{0}(g)=g(0)$. The subsequent step is to verify the conditions of the implicit function theorem.
\smallskip\\
(i) Note that $\Phi(\hat{g}_{\lambda,\mu},(\lambda,\mu))=(0,0)^T$ for any $(\lambda,\mu)\in{\B}$. One can compute the Fr\'echet derivative of $\Phi$ in the first component, namely, $D_1\Phi$:
\begin{align*}
D_1\Phi(g,(\lambda,\mu))h=\dbinom{\partial_t h-(1+\xi^{\prime}\lambda){\tu}DQ({\tui}g){\tui}h-{B}_{\lambda,\mu}(h)}{\gamma_{0}h}.
\end{align*}
Thus it yields
\begin{align*}
D_1\Phi(\hat{g},(0,0))h=\dbinom{\partial_t h-DQ(\hat{g})h}{\gamma_{0} h}.
\end{align*}
By \eqref{MR of RF}, for every $t\in J$, 
\begin{align}
\label{S4: ptwise MR}
D_1\Phi(\hat{g}(t),(0,0))=(\frac{d}{dt}-DQ(\hat{g}(t)),\gamma_{0})^T\in{\Lis}({\sef}(J),{\sez}(J)\times{S\!E_1}).
\end{align}
\begin{lem}
\label{section 7: MR of linearization}
$D_1\Phi(\hat{g},(0,0))=(\frac{d}{dt}-DQ(\hat{g}(\cdot)),\gamma_{0})^T\in{\Lis}({\sef}(J),{\sez}(J)\times{S\!E_1})$.
\end{lem}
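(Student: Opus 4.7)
The plan is to upgrade the pointwise (``frozen coefficient'') maximal regularity statement \eqref{S4: ptwise MR} to a non-autonomous maximal regularity result on the whole interval $J$ by a standard perturbation and localization argument. The fact that $J$ is compact and that the coefficient $DQ(\hat g(\cdot))$ depends continuously on $t$ will be the decisive ingredients.

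First I would establish the continuity of the coefficient map: since $Q \in C^{\omega}(\mho, S\!E_0)$, its Fr\'echet derivative satisfies $DQ \in C^{\omega}(\mho, \L(S\!E_1, S\!E_0))$; combined with $\hat g \in C(J, \mho)$ this yields
\begin{align*}
[t \mapsto DQ(\hat g(t))] \in C(J, \L(S\!E_1, S\!E_0)).
\end{align*}
Moreover, by \eqref{MR of RF}, for every $t \in J$ we have $-DQ(\hat g(t)) \in \mathcal{M}_1(S\!E_1, S\!E_0)$. The class $\mathcal{M}_1$ is open in $\L(S\!E_1, S\!E_0)$, so there exists $\delta(t) > 0$ such that any operator $A$ with $\|A + DQ(\hat g(t))\|_{\L(S\!E_1, S\!E_0)} < \delta(t)$ also lies in $\mathcal{M}_1(S\!E_1, S\!E_0)$.

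Next I would exploit compactness of $J$ together with the continuity of $t \mapsto DQ(\hat g(t))$ to obtain a finite partition $0 = t_0 < t_1 < \cdots < t_N = T$ such that on each subinterval $J_k := [t_{k-1}, t_k]$ the operator-valued function stays within a $\delta$-neighbourhood of the frozen value $DQ(\hat g(t_{k-1}))$. On each $J_k$, the autonomous operator $(\partial_t - DQ(\hat g(t_{k-1})), \gamma_0^{J_k})^T$ is an isomorphism onto $C(J_k, S\!E_0) \times S\!E_1$ by \eqref{S4: ptwise MR}, and the non-autonomous operator differs from it by the multiplication-by-a-small-operator perturbation $[DQ(\hat g(t)) - DQ(\hat g(t_{k-1}))]$, which is bounded from $C(J_k, S\!E_1)$ into $C(J_k, S\!E_0)$ with norm controllable by $\delta$. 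A Neumann series argument then gives invertibility on each $J_k$, with initial value prescribed at $t_{k-1}$.

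Finally, I would glue: solve the inhomogeneous Cauchy problem on $J_1$ starting from the prescribed initial condition at $t=0$, then use the value at $t_1$ (which lies in $S\!E_1$ by the trace theorem built into $\sef$) as the initial condition for the problem on $J_2$, and iterate. This yields both surjectivity and a right inverse of $D_1\Phi(\hat g, (0,0))$; injectivity follows by the same construction applied to the homogeneous problem with zero initial data. The open mapping theorem delivers the isomorphism. The main obstacle I expect is purely technical: checking that the perturbation argument respects the continuous maximal regularity class $\mathcal{M}_1$ (rather than just the Lebesgue maximal regularity class), which requires the stability results developed in \cite{Ange90, CleSim01} and invoked already in the author's earlier paper \cite{ShaoSim13}.
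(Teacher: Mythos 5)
Your proposal is correct and is essentially the paper's argument: the paper's proof consists of combining the pointwise statement \eqref{S4: ptwise MR} with a citation of \cite[Lemma~2.8(a)]{CleSim01}, and that cited lemma is proved by exactly the freezing-of-coefficients, small-perturbation and gluing scheme you describe. You have simply unpacked what the citation encapsulates, including the key inputs (continuity of $t\mapsto DQ(\hat g(t))$ in $\L(S\!E_1,S\!E_0)$ and openness of the maximal regularity class), so there is no gap.
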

\begin{proof}
The statement follows from \eqref{S4: ptwise MR} and \cite[Lemma~2.8(a)]{CleSim01}.
\end{proof}
(ii) Now it remains to show that $\Phi\in C^{\omega}({\Ub}(J)\times\B,\sez(J)\times S\!E_1)$. 
\smallskip\\
By Proposition~\ref {transformed functions} and \eqref{section 7: invariance of SV under tu}, $[(\lambda,\mu)\mapsto {B}_{\lambda,\mu}]\in{C^{\omega}({\B},C(J,\mathcal{L}(S\!E_1,S\!E_0)))}$. We define a bilinear and continuous map $f$ by:
\begin{align*}
f:C(J,\mathcal{L}(S\!E_1,S\!E_0))\times {\sef}(J) \rightarrow{\sez}(J),\hspace{.5em} (T(t),g(t))\mapsto T(t)g(t).
\end{align*}
Since $f$ is real analytic, it yields 
\begin{align*}
[(g,(\lambda,\mu))\mapsto{f({B}_{\lambda,\mu},g)}={B}_{\lambda,\mu}(g)]\in{C^{\omega}({\Ub}(J)\times{\B},{\sez}(J))}.
\end{align*}
From equation \eqref{formula of Q-Ricci}, we know that, in every local chart $(\Uk,\varphi_{\kappa})$ and for any $1\leq i,j \leq m$, $Q(g)$ can be written in the form of 
\begin{align*}
(Q_{\kappa}(g))_{ij}=\frac{\sum_{k}{A}^{k_1}_{\kappa,ij}g \cdots {A}^{k_r}_{\kappa,ij}g}{\det[g]^3},
\end{align*}
where the ${A}^{r}_{\kappa,ij}$ are linear differential operators of order at most two belonging to $\L(bc^{2+\alpha}(\Q,E),bc^{\alpha}(\Q))$ acting on the components of $g$ whose coefficients are in $ BC^{\infty}(\Q,\L(E,\R))\cap C^{\omega}(\Q,\L(E,\R))$. $\det[g]$ denotes the determinant of the first fundamental form for the metric $g$. The power $3$ of $\det[g]$ follows from the well-known formula for inverse matrix.
\smallskip\\
We set $\tilde{\pi}_{\kp}:=\zeta_{\kp}+\zeta_{\iota}-\zeta_{\kp}\zeta_{\iota}$ and $\tilde{\pi}_{\kappa}:=\pi_{\kappa}^2-\tilde{\pi}_{\kp}\pi_{\kappa}^2$. 
In virtue of (L2), one can check that $(\tilde{\pi}_{\kappa})_{\kappa\in\mathfrak{K}}$ forms a partition of unity subordinate to $\mathfrak{A}$. Decompose $Q$ into 
\begin{align*}
Q(g)=\sum_{\kappa}\tilde{\pi}_{\kappa} Q(g).
\end{align*}
Let $\pi:={\kfp}\tilde{\pi}_{\kp}$. The local expression of $\tilde{\pi}_{\kp} Q(g)$ in $(\Ukp,\varphi_{\kp})$ is of the form 
\begin{align*}
({\kfp}\tilde{\pi}_{\kp} Q_{\kp}(g))_{ij}=\pi \frac{\sum_{k}{A}^{k_1}_{\kp,ij}g \cdots {A}^{k_r}_{\kp,ij}g}{\det[g]^3}.
\end{align*}
We introduce an auxiliary function $\varpi\in\mathcal{D}({\Q},[0,1])$ satisfying $\varpi|_{\supp(\pi)}\equiv 1$ with $\varpi_{\kappa}:={\kbk}\varpi$. We define linear differential operators $\mathcal{A}^s_{ij}:E_1 \rightarrow E_0$ by
\begin{align*}
\mathcal{A}^s_{\kp,ij}g:=({\kbp}\varpi{A}^{s}_{\kp,ij}{\kfp}g) dx^{i}\otimes dx^{j}.
\end{align*}  
These are well-defined operators of order at most two on $\M$. One checks that the coefficients of their local expressions in $(\Ukp,\varphi_{\kp})$ belong to 
\begin{align*}
C^{\omega}(\varphi_{\kp}(\U),\L(E))\cap BC^{\infty}(\Q,\L(E)),
\end{align*}
and are supported in $\supp(\varpi)$. 
By Proposition~\ref{differentiability involving time}(b), we thus have
\begin{align}
\label{S5: G&F}
[\mu\mapsto  {\tu}\mathcal{A}^s_{\kp,ij}{\tui}]\in C^{\omega}(\B, C(J,\L(E_1,E_0))).
\end{align}
Next we define a tensor field $S_{\varpi}\in C^{\infty}(\M,V^{\sigma+2\tau}_{\tau+2\sigma})$ compactly supported in $\Ukp$ by
\begin{align*}
S_{\varpi}:=\varpi_{\kp}\frac{\partial}{\partial x^{(i)}}\otimes \frac{\partial}{\partial x^{(j)}}\otimes \frac{\partial}{\partial x^{(j)}} \otimes dx^{(j)}\otimes dx^{(i)}\otimes dx^{(i)}
\end{align*}
for all $(i)\in\mathbb{J}^{\sigma}, (j)\in\mathbb{J}^{\tau}$ with $\sigma,\tau\in\N_0$. Let $\mathsf{C}$ be the complete contraction. The operation $\mathsf{C}^{\sigma}_{\tau}: \Gamma(\M, V^{\sigma}_{\tau}) \times \Gamma(\M, V^{\sigma}_{\tau}) \rightarrow \Gamma(\M, V^{\sigma}_{\tau})$ is defined by
\begin{align*}
\mathsf{C}^{\sigma}_{\tau}(a,b):=\mathsf{C}(S_{\varpi}, a\otimes b)=\varpi_{\kp} a^{(i)}_{(j)}b^{(i)}_{(j)} \frac{\partial}{\partial x^{(i)}}\otimes dx^{(j)}.
\end{align*}
\begin{lem}
\label{pointwise multiplication properties 3}
The point-wise extension $\mathsf{m}$ of $\mathsf{C}^{\sigma}_{\tau}$ satisfies 
\begin{align*}
[(v,u)\mapsto \mathsf{m}(v,u)]\in \L(bc^s({\M},V),bc^s({\M},V);bc^s({\M},V)).
\end{align*}
\end{lem}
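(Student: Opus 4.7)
The plan is to recognize $\mathsf{C}^{\sigma}_{\tau}$ as the pointwise action of a bundle multiplication $\mathsf{m}:V\times V\rightarrow V$ in the sense of \eqref{section 2: bundle multiplication}, so that Proposition~\ref{pointwise multiplication properties}(c) applies immediately. Once that identification is in place, the lemma reduces to a bookkeeping exercise.

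First I would unpack the canonical bundle isomorphism
\[
\Hom(V\otimes V,V)\;\cong\; V\otimes (V\otimes V)^{\ast}\;\cong\; V^{\sigma+2\tau}_{\tau+2\sigma},
\]
which holds because, with $V=V^{\sigma}_{\tau}$, one has $V^{\ast}\cong V^{\tau}_{\sigma}$ and the tangent/cotangent exponents add. Under this identification, the section $S_{\varpi}\in\Gamma(\M,V^{\sigma+2\tau}_{\tau+2\sigma})$ corresponds to a bundle homomorphism $\mathfrak{m}\in\Gamma(\M,\Hom(V\otimes V,V))$, and the defining formula of $\mathsf{C}^{\sigma}_{\tau}$ rewrites fiberwise as $\mathsf{C}^{\sigma}_{\tau}(a,b)(\p)=\mathfrak{m}(\p)(a(\p)\otimes b(\p))$. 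Hence the pointwise extension $\mathsf{m}$ of $\mathsf{C}^{\sigma}_{\tau}$ coincides with the bundle-multiplication-induced map described in Section~2.

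Next I would verify $\mathfrak{m}\in BC^{\infty}(\M,\Hom(V\otimes V,V))$. Since $\varpi\in\mathcal{D}(\Q,[0,1])$, the tensor $S_{\varpi}$ is compactly supported in $\Ukp$, and in the chart $(\Ukp,\varphi_{\kp})$ its only coefficient is $\psi^{\ast}_{\kp}\varpi\in\mathcal{D}(\Q)$. The pointwise equivalence \eqref{important pt estimate} combined with (R4) then yields uniform boundedness of $|\nabla^{k}S_{\varpi}|_{g}$ on $\M$ for every $k\in\N_0$; boundedness in all other charts is automatic because $S_{\varpi}$ vanishes outside $\Ukp$.

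With $\mathfrak{m}$ now identified as a bona fide bundle multiplication, Proposition~\ref{pointwise multiplication properties}(c) applied with $V_1=V_2=V_3=V$ and $\F=bc$ gives bilinear continuity of $[(v,u)\mapsto\mathsf{m}(v,u)]$ from $bc^s(\M,V)\times bc^s(\M,V)$ into $bc^s(\M,V)$, for every $s\geq 0$. I do not foresee any genuine obstacle; the only subtlety worth checking is the index gymnastics in the first step, where one must confirm that $\mathsf{C}(S_{\varpi},a\otimes b)$ reproduces the fiberwise contraction $\varpi_{\kp}\,a^{(i)}_{(j)}b^{(i)}_{(j)}\frac{\partial}{\partial x^{(i)}}\otimes dx^{(j)}$ dictated by the chosen ordering of the factors in $S_{\varpi}$.
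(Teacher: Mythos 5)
Your proposal is correct and follows the same route as the paper: recognize $\mathsf{C}^{\sigma}_{\tau}$ as a bundle multiplication and then invoke Proposition~\ref{pointwise multiplication properties}(c). The only difference is that the paper delegates the bundle-multiplication property to \cite[Examples~13.4(b), Lemma~14.2]{AmaAr}, whereas you verify it directly via the identification $\Hom(V\otimes V,V)\cong V^{\sigma+2\tau}_{\tau+2\sigma}$ and the estimate \eqref{important pt estimate}; both steps are sound.
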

\begin{proof}
By \cite[Examples~13.4(b), Lemma~14.2]{AmaAr}, $\mathsf{C}^{\sigma}_{\tau}$ is a bundle multiplication. The assertion hence follows by Proposition~\ref{pointwise multiplication properties}.
\end{proof}
Using the notation in the lemma above, we can write
\begin{align*}
\tilde{\pi}_{\kp}{\kbp}(\tilde{Q}_{\kp}(g))_{ij}dx^{i}\otimes dx^{j}=\frac{\tilde{\pi}_{\kp}}{\mathcal{P}_{\kp}(g)}\sum_k \mathsf{m}(\cdots (\mathsf{m}( \mathcal{A}^{k_1}_{\kp,ij}g, \mathcal{A}^{k_2}_{\kp,ij}g),\cdot)\cdots,\mathcal{A}^{k_r}_{\kp,ij}g).
\end{align*}
Here $\mathcal{P}_{\kp}(g):={\kbp}(\varpi\det[g]^3)+(1_{\M}-\varpi_{\kp})>0$. 
It follows from an analogous argument as above, \cite[Proposition~6.4]{ShaoSim13} and Proposition~\ref{pointwise multiplication properties}, \ref{differentiability involving time}(a) that
\begin{align}
\label{section 7 eq1}
[(g,\mu) \mapsto{\tu}\frac{\tilde{\pi}_{\kp}}{\mathcal{P}_{\kp}({\tui} g)}]\in C^{\omega}({\Ub}(J)\times\B,C(J,bc^{2+\alpha}(\M))).
\end{align}
By Proposition~\ref{pointwise multiplication properties}, Lemma \ref{pointwise multiplication properties 3}, \eqref{S5: G&F} and \eqref{section 7 eq1}, we hence conclude that 
\begin{align*}
[(g,\mu)\mapsto {\tu}\{\tilde{\pi}_{\kp}{\kbp}(\tilde{Q}_{\kp}({\tui}g))_{ij} dx^{i}\otimes dx^{j}\}]\in C^{\omega}({\Ub}(J)\times\B,\ez(J)).
\end{align*}
Since $\tilde{\pi}_{\kp} Q(g)=\sum_{i,j=1}^{m}\tilde{\pi}_{\kp}{\kbp}(\tilde{Q}_{\kp}(g))_{ij} dx^{i}\otimes dx^{j}$, it yields
\begin{align*}
[(g,(\lambda,\mu))\mapsto(1+\xi^{\prime}\lambda){\tu}\tilde{\pi}_{\kp} Q({\tui}g)]\in C^{\omega}({\Ub}(J)\times\B,\ez(J)).
\end{align*}
Applying this argument to all other components $\tilde{\pi}_{\kappa} Q$, by \eqref{section 7: invariance of SV under tu} we immediately have
\begin{align*}
[(g,(\lambda,\mu))\mapsto(1+\xi^{\prime}\lambda){\tu} Q({\tui}g)]\in C^{\omega}({\Ub}(J)\times\B,\sez(J)).
\end{align*}

The implicit function theorem implies that there is a $\mathbb{B}(0,r_0)\subset{\B}$ such that
\begin{align*}
[(\lambda,\mu)\mapsto{\hat{g}}_{\lambda,\mu}]\in{C^{\omega}(\mathbb{B}(0,r_0),{\sef}(J))}.
\end{align*}
Because $(t_0,\p)$ is arbitrary and $E_1\hookrightarrow{BC({\M},V)}$, it follows from Theorem \ref{S1: Theorem} that
\begin{theorem}
\label{Analyticity of Ricci-DeTurck}
The solution $\hat{g}$ in Theorem~\ref{existence for RF-Ange90} satisfies $\hat{g}\in C^{\omega}(\dot{J}(g_0)\times\M,SV$).
\end{theorem}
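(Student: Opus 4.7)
My plan is to apply the implicit function theorem to the map $\Phi$ built from the parameter-dependent diffeomorphisms, exactly in the framework the author has set up. Fix an arbitrary point $(t_0,\p)\in\dot{J}(g_0)\times\M$ and choose a compact subinterval $J=[0,T]\subset J(g_0)$ with $t_0\in\mathring{J}$. Around $(t_0,\p)$ one builds the family $\{\ttl:(\lambda,\mu)\in\B\}$ for sufficiently small $r>0$, and transforms $\hat{g}$ to $u=\ttl\hat{g}$. By Proposition~\ref{transformed functions}, $u$ satisfies a parameter-dependent equation $\partial_t u = -H_{\lambda,\mu}(u)$, which motivates the map $\Phi:\Ub(J)\times\B\to\sez(J)\times S\!E_1$ defined in the excerpt, with the fixed point $\Phi(\hat{g}_{\lambda,\mu},(\lambda,\mu))=(0,0)^T$.

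The first task is to verify that $D_1\Phi(\hat{g},(0,0))=(\partial_t - DQ(\hat{g}(\cdot)),\gamma_0)$ is a topological isomorphism in $\Lis(\sef(J),\sez(J)\times S\!E_1)$. At $(\lambda,\mu)=(0,0)$ the transport terms disappear, and since $-DQ(g)$ is normally elliptic for every $g\in\mho$ by the positive definiteness of the metric, the maximal regularity theory of \cite{ShaoSim13} gives $-DQ(g)\in\mathcal{M}_1(S\!E_1,S\!E_0)$ pointwise in $t$. Lifting this pointwise maximal regularity to the time-dependent setting via \cite[Lemma~2.8(a)]{CleSim01} yields the desired isomorphism, as stated in Lemma~\ref{section 7: MR of linearization}.

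The main step is the second hypothesis: real analyticity of $\Phi$ on $\Ub(J)\times\B$. The linear piece $B_{\lambda,\mu}$ is already real analytic in $(\lambda,\mu)$ by Proposition~\ref{transformed functions}, so the work concentrates on $(g,(\lambda,\mu))\mapsto(1+\xi'\lambda)\tu Q(\tui g)$. Here one decomposes $Q(g)=\sum_{\kappa}\tilde{\pi}_{\kappa}Q(g)$ using the partition of unity $(\tilde{\pi}_\kappa)$ subordinate to $\mathfrak{A}$. Since $\ttl$ acts trivially outside the local patch around $\p$, only the summand supported near $\iota$ needs genuine analysis; the explicit local rational form of $Q$ in \eqref{formula of Q-Ricci} allows one to express $\tilde{\pi}_{\kp}Q(g)$ as a rational combination of localized second-order differential operators $\mathcal{A}^s_{\kp,ij}$ whose coefficients belong to $BC^{\infty}\cap C^{\omega}$ on $\varphi_{\kp}(\U)$. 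By Proposition~\ref{differentiability involving time}(b) the map $[\mu\mapsto\tu\mathcal{A}^s_{\kp,ij}\tui]$ is then real analytic into $C(J,\L(E_1,E_0))$. Combining this with Lemma~\ref{pointwise multiplication properties 3}, Proposition~\ref{pointwise multiplication properties}, the analyticity of the reciprocal of $\mathcal{P}_{\kp}(g)={\kbp}(\varpi\det[g]^3)+(1_{\M}-\varpi_{\kp})>0$, and the invariance \eqref{section 7: invariance of SV under tu} of symmetric tensors under $\ttm$, yields $[(g,(\lambda,\mu))\mapsto(1+\xi'\lambda)\tu Q(\tui g)]\in C^{\omega}(\Ub(J)\times\B,\sez(J))$.

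With both hypotheses verified, the implicit function theorem produces some $r_0\in(0,r]$ and a real analytic map $[(\lambda,\mu)\mapsto\hat{g}_{\lambda,\mu}]\in C^{\omega}(\mathbb{B}(0,r_0),\sef(J))$. Since $\sef(J)\hookrightarrow C(J,E_1)\hookrightarrow BC(J\times\M,V)$ and $(t_0,\p)$ was arbitrary, Theorem~\ref{S1: Theorem} immediately yields $\hat{g}\in C^{\omega}(\dot{J}(g_0)\times\M,SV)$. I expect the bookkeeping in the third step to be the main obstacle: one must carefully reconcile the partition-of-unity decomposition of the nonlinear operator $Q$ with the rational form involving $\det[g]^3$ while keeping the analyticity of every factor with respect to both $g$ and the translation parameters $(\lambda,\mu)$, and the use of an auxiliary cut-off $\varpi$ to force global lower boundedness of the determinant is the crucial technical device.
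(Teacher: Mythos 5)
Your proposal is correct and follows essentially the same route as the paper: the same map $\Phi$, the same verification of $D_1\Phi(\hat{g},(0,0))\in\Lis(\sef(J),\sez(J)\times S\!E_1)$ via \eqref{MR of RF} and \cite[Lemma~2.8(a)]{CleSim01}, the same partition-of-unity decomposition of $Q$ into localized rational expressions in the operators $\mathcal{A}^s_{\kp,ij}$ with the cut-off $\varpi$, Proposition~\ref{differentiability involving time}(b) and Lemma~\ref{pointwise multiplication properties 3} for analyticity of $\Phi$, and finally the implicit function theorem combined with Theorem~\ref{S1: Theorem}. The only cosmetic difference is that the paper explicitly runs the analyticity argument over all summands $\tilde{\pi}_{\kappa}Q$ rather than singling out the one near $\p$, but this does not change the substance.
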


The one-parameter family of vector fields $W_{\hat{g}}(t)$ exists as long as the solution $\hat{g}(t)$ to \eqref{Ricci-DeTurck flow equation 1} exists, i.e., $W_{\hat{g}}(t)$ lives on $J(g_0)$. As an immediate consequence of Theorem \ref{Analyticity of Ricci-DeTurck}, we attain
\begin{align}
\label{S4: W}
W_{\hat{g}}\in C^{\omega}(\dot{J}(g_0)\times\M,T\M).
\end{align} 
One defines a one-parameter family of maps $\phi_t:\M \rightarrow \M$ by
\begin{align}
\label{manifold diffeomorphism}
\begin{cases}
\partial_t \phi_t(\p)=-W_{\hat{g}}(t,\phi_t(\p)),\\
\phi_0={\id}_{\M}.
\end{cases}
\end{align}
The proof of \cite[Lemma~3.15]{Chow04} shows that the solution to this system of ODE exists smoothly on $\dot{J}(g_0)$ and remains diffeomorphisms for all time.
\begin{prop}\cite[Section~3.3: Step 3]{Chow04}
\label{Ricci-DeTurck to Ricci}
The family of metrics $\bar{g}(t):=\phi^{\ast}_t \hat{g}(t)$ with $t\in J(g_0)$ is a solution to the Ricci flow \eqref{Ricci flow equation 0}.
\end{prop}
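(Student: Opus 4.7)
The plan is to carry out the standard DeTurck cancellation: differentiate $\bar{g}(t)=\phi_t^\ast \hat g(t)$ in $t$, use the Ricci--DeTurck equation on $\hat g$, and observe that the Lie derivative term is exactly cancelled by the pullback derivative because $\phi_t$ is generated by $-W_{\hat g}$. Concretely, for a time-dependent tensor $a(t)$ and a time-dependent flow $\phi_t$ generated by $X_t$ (so $\partial_t \phi_t = X_t\circ \phi_t$), one has the classical identity
\begin{equation*}
\partial_t\bigl(\phi_t^\ast a(t)\bigr)=\phi_t^\ast\bigl(\partial_t a(t)\bigr)+\phi_t^\ast\bigl(\L_{X_t}a(t)\bigr).
\end{equation*}
I would first verify this identity (it is standard, but it requires enough joint smoothness of $\phi_t$ and $\hat g(t)$, both of which are supplied on $\dot J(g_0)$ by Theorem~\ref{Analyticity of Ricci-DeTurck} and by the Chow--Knopf discussion cited after \eqref{manifold diffeomorphism}).

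Applying this with $a(t)=\hat g(t)$ and $X_t=-W_{\hat g}(t)$ gives
\begin{equation*}
\partial_t \bar g(t)=\phi_t^\ast\bigl(\partial_t\hat g(t)\bigr)+\phi_t^\ast\bigl(\L_{-W_{\hat g(t)}}\hat g(t)\bigr)=\phi_t^\ast\bigl(-2\Ric(\hat g(t))+\L_{W_{\hat g(t)}}\hat g(t)-\L_{W_{\hat g(t)}}\hat g(t)\bigr),
\end{equation*}
where I used the Ricci--DeTurck equation \eqref{Ricci-DeTurck flow equation 1} for $\hat g$. The Lie derivative contributions cancel, so
\begin{equation*}
\partial_t\bar g(t)=-2\phi_t^\ast\Ric(\hat g(t))=-2\Ric\bigl(\phi_t^\ast\hat g(t)\bigr)=-2\Ric(\bar g(t)),
\end{equation*}
using the diffeomorphism invariance of the Ricci tensor. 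The initial condition $\bar g(0)=\phi_0^\ast\hat g(0)=\id_{\M}^\ast g_0=g_0$ is immediate from $\phi_0=\id_{\M}$.

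The only delicate point, and the step I would spend the most care on, is the regularity at $t=0$: near the initial time, $\hat g$ is only $bc^{2+\alpha}$ in space and $C^1$ in time, and the smoothing statement of Theorem~\ref{Analyticity of Ricci-DeTurck} is formulated on the \emph{open} interval $\dot J(g_0)$. For $t>0$ everything is analytic in $t$ and $\p$, so the computation above holds classically on $\dot J(g_0)\times\M$; at $t=0$ one appeals to continuity of $\partial_t\bar g$ up to $t=0$, which is provided by the $C^1(J(g_0),bc^{\alpha}(\M,SV))$-regularity of $\hat g$ from Theorem~\ref{existence for RF-Ange90} together with the smooth dependence of the ODE \eqref{manifold diffeomorphism} on its right-hand side. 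Since Chow--Knopf already establish that $\phi_t$ is a diffeomorphism of $\M$ for every $t\in J(g_0)$, the pullback $\phi_t^\ast$ makes sense throughout $J(g_0)$, and combining the pointwise identity $\partial_t\bar g(t)=-2\Ric(\bar g(t))$ on $\dot J(g_0)$ with the continuity at $t=0$ yields the conclusion on all of $J(g_0)$.
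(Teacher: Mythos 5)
Your proof is correct and follows essentially the same route as the paper: the paper likewise computes $\partial_t\bigl(\phi_t^{\ast}\hat g(t)\bigr)$ by splitting off $\partial_s|_{s=0}\phi_{t+s}^{\ast}\hat g(t)=-\L_{\phi_t^{\ast}W_{\hat g}}\phi_t^{\ast}\hat g(t)$, which by naturality of the Lie derivative is exactly your term $\phi_t^{\ast}(\L_{-W_{\hat g}}\hat g)$, and then cancels it against the DeTurck term and uses diffeomorphism invariance of $\Ric$. Your extra attention to the regularity at $t=0$ is a reasonable refinement that the paper's brief proof does not spell out, but it does not change the argument.
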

\begin{proof}
We will present a brief proof for the reader's convenience. First observe that $\bar{g}(0)=\hat{g}(0)=g_0$. Then we compute 
\begin{align}
\label{S4: last sol}
\notag\partial_t \phi^{\ast}_t \hat{g}(t)&=\partial_s|_{s=0} \phi^{\ast}_{t+s} \hat{g}(t+s)
=\phi^{\ast}_t[\partial_t \hat{g}(t)]+\partial_s|_{s=0}  \phi^{\ast}_{t+s} \hat{g}(t)\\
\notag&=\phi^{\ast}_t[-2\Ric(\hat{g}(t))+\L_{W_{\hat{g}}(t,\cdot)}\hat{g}(t)]+\partial_s|_{s=0}  [(\phi^{-1}_{t}\circ\phi_{t+s})^{\ast}\phi^{\ast}_{t} \hat{g}(t)]\\
&=-2\Ric(\phi^{\ast}_t \hat{g}(t))+\phi^{\ast}_t(\L_{W_{\hat{g}}(t,\cdot)}\hat{g}(t))-\L_{\phi^{\ast}_{t}W_{\hat{g}}(t,\cdot)}\phi^{\ast}_{t}\hat{g}(t)\\
\notag&=-2\Ric(\phi^{\ast}_t \hat{g}(t)).
\end{align}
\eqref{S4: last sol} follows from the identity:
\begin{align*}
\partial_s|_{s=0}(\phi^{-1}_{t}\circ\phi_{t+s})&=d(\phi^{-1}_{t})\partial_s|_{s=0}\phi_{t+s}
=-d(\phi^{-1}_{t})W_{\hat{g}}(t,\phi_t)
=-\phi^{\ast}_t W_{\hat{g}}(t,\cdot).
\end{align*}
Hence $\bar{g}(t)$ solves the Ricci flow \eqref{Ricci flow equation 0}.
\end{proof}
We prove a theorem on time-analyticity of solutions to the Ricci flow:
\begin{theorem}
\label{Analyticity of Ricci flow}
Suppose that $g_0\in \mho:=\{g\in bc^{2+\alpha}(\M,SV): g \text{ positive definite}\}$ with some fixed $0<\alpha<1$. Then there exists a $T(g_0)>0$ such that on the maximal interval of existence $J(g_0):=[0,T(g_0))$ equation \eqref{Ricci flow equation 0} has a unique local solution 
\begin{align*}
\bar{g}\in C^{\omega}(\dot{J}(g_0),BC(\M, SV)).
\end{align*} 
\end{theorem}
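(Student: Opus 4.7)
The idea is to promote the joint analyticity of the Ricci--DeTurck solution $\hat{g}$ (Theorem~\ref{Analyticity of Ricci-DeTurck}) to temporal analyticity of $\bar{g}$ through the diffeomorphism pullback of Proposition~\ref{Ricci-DeTurck to Ricci}. The two ingredients are: (i) real-analytic dependence of the flow $\phi_t$ on $t\in\dot{J}(g_0)$ with values in $BC^1(\M,\M)$; and (ii) preservation of temporal analyticity under the pullback operation in the $BC$-topology.

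For (i), the local expression $W^k_{\hat{g}}=\hat{g}^{pq}(\Gamma^k_{pq}-\tilde{\Gamma}^k_{pq})$ is a rational function in $\hat{g}$ and its first spatial derivatives, whose coefficients are real analytic (since $\mathfrak{A}$ and $\tilde{g}$ are real analytic). Combined with Theorem~\ref{Analyticity of Ricci-DeTurck}, this yields $W_{\hat{g}}\in C^{\omega}(\dot{J}(g_0)\times\M,T\M)$, as already recorded in~\eqref{S4: W}. Hence the ODE~\eqref{manifold diffeomorphism} has a jointly real-analytic right-hand side on $\dot{J}(g_0)\times\M$. For any compact subinterval $J\subset\dot{J}(g_0)$, the classical theory of analytic ODE---complexifying $t$ and solving the holomorphic initial-value problem on a small complex disk via the Cauchy majorant method---produces a radius of convergence that is uniform in the starting point $\p\in\M$ by compactness of $\M$. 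Thus $[t\mapsto\phi_t]\in C^{\omega}(\dot{J}(g_0),BC^k(\M,\M))$ for every $k\in\N$, and in particular for the coordinate derivatives $\partial_i\phi_t^j$.

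For (ii), in any local chart one has
\begin{align*}
(\phi_t^*\hat{g}(t))_{ij}(\p)=\hat{g}_{kl}(t,\phi_t(\p))\,\partial_i\phi_t^k(\p)\,\partial_j\phi_t^l(\p).
\end{align*}
The joint real analyticity of $\hat{g}$ together with step~(i) shows that $[t\mapsto \hat{g}_{kl}(t,\phi_t(\cdot))]\in C^{\omega}(\dot{J}(g_0),BC(\M))$; combining this with the temporal analyticity of $\partial_i\phi_t^j$ and the bilinear continuity of pointwise multiplication on $BC$ (Proposition~\ref{pointwise multiplication properties}(c) with $\F=BC$, $s=0$), we conclude $\bar{g}:=\phi_t^*\hat{g}(t)\in C^{\omega}(\dot{J}(g_0),BC(\M,SV))$. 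Proposition~\ref{Ricci-DeTurck to Ricci} identifies $\bar{g}$ as a solution of~\eqref{Ricci flow equation 0} on $J(g_0)$ with $\bar{g}(0)=g_0$, and uniqueness follows from the classical well-posedness of the Ricci flow for $bc^{2+\alpha}$ initial data (via the DeTurck correspondence). The principal technical obstacle is step~(i), namely obtaining a uniform-in-$\p$ radius of complex extension of $t\mapsto\phi_t(\p)$; this requires uniform analyticity estimates for $W_{\hat{g}}$ on compact subsets of $\dot{J}(g_0)\times\M$, which are provided by Theorem~\ref{Analyticity of Ricci-DeTurck} and the compactness of $\M$. All remaining steps are compositional and reduce to the function-space machinery developed in Sections~2 and~3.
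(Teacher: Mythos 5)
Your proposal is correct in outline and follows the same skeleton as the paper: Theorem~\ref{Analyticity of Ricci-DeTurck} for $\hat g$, Proposition~\ref{Ricci-DeTurck to Ricci} to pass to $\bar g=\phi_t^*\hat g$, temporal analyticity of $\phi_t$ and $\partial_i\phi_t$ uniformly in $\p$, and the pullback formula together with pointwise multiplication and the retraction of Proposition~\ref{S2: retraction of H-LH}. Where you genuinely diverge is in the key step (i). The paper explicitly warns that treating \eqref{manifold diffeomorphism} as a family of $\R^m$-valued ODEs only yields analyticity of $t\mapsto\phi_t(\p)$ for each fixed $\p$; to get $BC$-valued analyticity it lifts the ODE to the Banach space $X=C(B,\Q)$, proves analyticity of the substitution operator $u\mapsto W_\kappa(t,u)$ via the Cauchy estimates \eqref{S4: Cauchy est} and a compactness argument, and then runs the implicit function theorem with the time translation $\rh$ --- once for $\hat h_\kappa$ and once more for the variational equation \eqref{S4: part der} governing $\partial_i\hat h_\kappa$. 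You instead invoke classical holomorphic ODE theory: complexify $t$, extend $W_{\hat g}$ holomorphically to a uniform complex neighbourhood of any compact $I\times\M\subset\dot J(g_0)\times\M$, and obtain a radius of convergence and Cauchy bounds uniform in $\p$ by compactness. This is a legitimate and arguably more elementary route, and it supplies exactly the uniformity the paper says the naive pointwise approach lacks; but two points are compressed and should be spelled out to make it airtight. First, the passage from ``each $t\mapsto\phi_t(\p)$ has a power series with uniformly bounded coefficients $\|a_n\|_\infty\le M/r^n$ and $a_n(\cdot)$ continuous'' to ``$t\mapsto\phi_t$ is analytic as a $BC$-valued map'' is the actual content of the step and deserves an explicit sentence (norm convergence of $\sum a_n(\cdot)(t-t_0)^n$ in $BC$). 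Second, your step (ii) quietly uses analyticity of the substitution operator $u\mapsto\hat g_{kl}(t,u(\cdot))$ between $C(B,\Q)$ and $C(B)$ --- the same issue again --- which the paper handles by upgrading ${\kfk}\hat g$ to an element of $C^{\omega}(\dot J\times C(B,\Q),C(B,S\!E))$; the same uniform Cauchy-estimate argument covers it, but it is not ``purely compositional.'' Finally, like the paper, you treat uniqueness of the Ricci flow for $bc^{2+\alpha}$ data as known via the DeTurck correspondence; that is consistent with the paper's level of detail.
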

\begin{proof}
Without loss of generality, we assume that $T(g_0)$ is small enough such that for each local patch $(\Uk,\varphi_{\kappa})$ and each $\p\in \psi_{\kappa}(\frac{2+r_0}{3}\overline{\Q})$, $\phi_t(\p)$ remains in $\Uk$, where $r_0\in (0,1)$ is a shrinking constant in the uniformly shrinkable property of $\mathfrak{A}$. Put 
\begin{align*}
h_{\kappa}(t):=\varphi_{\kappa}\circ\phi_t\circ \psi_{\kappa},\hspace{1em} W_{\kappa}:=\kfk W_{\hat{g}}.
\end{align*}
Let $B:=\frac{1+r_0}{2}\overline{\Q}$ and $\breve{B}:=\frac{2+r_0}{3}\overline{\Q}$.
We first show that for $i=1,\cdots,m$
\begin{align}
\label{S4: reg of h & h_i}
{h}_{\kappa}\in C^{\omega}(\dot{J}, C(B,\Q)),\hspace*{.5em}\text{ and }\hspace*{.5em} \partial_i {h}_{\kappa}\in C^{\omega}(\dot{J}, C(B,\R^m)).
\end{align}
We will split \eqref{manifold diffeomorphism} into infinite dimensional vector-valued ODEs. Actually, if we choose to work with the usual $\R^m$-valued ODEs, then we will only be able to show that the flow through every $\p\in\M$ is analytic, i.e., $\bar{g}(\cdot,\p)\in C^{\omega}(\dot{J},T^{\ast}_{\p}\M \otimes T^{\ast}_{\p}\M)$.
\smallskip\\
(i) We first note that ${h}_{\kappa}$ solves the following ODE
\begin{align}
\label{section 7 last eq}
\begin{cases}
\frac{d}{dt} h_{\kappa}(t)=-{\kfk} W_{\hat{g}}(t,h_{\kappa}(t))=-{W}_{\kappa}(t,h_{\kappa}(t)),\\
h_{\kappa}(0)={\id}_{B}.
\end{cases}
\end{align}
Thanks to $\hat{g}\in  C(J, \mho)\cap C^{\omega}(\dot{J}\times\M,SV)$, \eqref{S3: kfk is bdd} and \eqref{S4: W}, we obtain
\begin{align}
\label{S4: reg of Wk}
{W}_{\kappa}\in  C(J,bc^{1+\alpha}(\Q,\R^m))\cap C^{\omega}(\dot{J}\times \Q,\R^m).
\end{align}
Let $X:=C(B,\Q)$ and $Y:=C(B,\R^m)$. For any $u\in X$ and $v\in Y$ with $\|v\|_Y$ so small that $u+v\in X$, we have
\begin{align*}
&\quad W_{\kappa}(t,u+v)(x)-W_{\kappa}(t,u)(x)-\partial_2 W_{\kappa}(t,u(x))v(x)\\
&=\int\limits_{0}^1 [\partial_2 W_{\kappa}(t,u(x)+sv(x))-\partial_2 W_{\kappa}(t,u(x))]\, ds \; v(x)\\
&=\int\limits_{0}^1 [\partial_2 W_{\kappa}(t,u+sv)-\partial_2 W_{\kappa}(t,u)]\, ds(x) \; v(x)
\end{align*}
for $x\in B$.
It is not hard to check that $W_{\kappa}(t,\cdot)\in BU\!C^1(\Q,\R^m)$. Thus the expression in the last line converges to $0$ in $Y$ as $\|v\|_Y\rightarrow 0$. Together with the second part of \eqref{S4: reg of Wk}, it implies that
\begin{align}
\label{S4: reg of Wk-vec}
W_{\kappa}\in C^{0,1}(J\times X,Y)\cap C^{\omega}(\dot{J}\times X,Y).
\end{align}
By the Picard-Lindel\"of Theorem, there exists a unique solution $\hat{h}_{\kappa}\in C^1(J,X)$ to \eqref{section 7 last eq} on $J_1:=[0,T_1]$. Given any $t_0\in \mathring{J}_1$, we pick $I:=[\varepsilon,T]\subset \mathring{J}_1$ with $t_0\in \mathring{I}$. Define $\Phi: C^1(I,X)\times \B  \rightarrow C(I,Y)\times Y$ by
\begin{align*}
\Phi(u,\lambda)=
\binom{\partial_t u(t)+(1+\xi^{\prime}(t)\lambda) W_{\kappa}(t+\xi(t)\lambda, u(t))}{\gamma_{\varepsilon}u-\hat{h}_{\kappa}(\varepsilon)}.
\end{align*}
Note that $\Phi({\rh}\hat{h}_{\kappa},\lambda)=(0,0)^T$. \eqref{S4: reg of Wk-vec} implies that $D_2 W_{\kappa}(\cdot,\hat{h}_{\kappa}(\cdot))\in C(I,\L(Y))$, where $D_2 W_{\kappa}$ denotes the Fr\'echet derivative of $W_{\kappa}$ with respect to $Y$. By standard ODE theory, we obtain
\begin{align*}
D_1\Phi(\hat{h}_{\kappa},0)\in \Lis(C^1(I,Y),C(I,Y)\times Y).
\end{align*}
On the other hand, we can also infer from \eqref{S4: reg of Wk-vec} that for every $(t_0,x_0)\in \dot{J}\times X$, there exist constants $M,R,r_1$ depending on $(t_0,x_0)$ such that for all $(s,x)\in \mathbb{B}(t_0,r_1)\times\mathbb{B}_{Y}(x_0,r_1)$
\begin{align}
\label{S4: Cauchy est}
\|D^{\beta}W_{\kappa}(s,x)\|\leq M\frac{\beta!}{R^{|\beta|}}\hspace{1em} \text{ with }\beta\in\N_0^2.
\end{align}
Given $u_0\in C(I,X)$, let $\mathcal{R}:={\im}(u_0)\subset X$. Since $I\times\mathcal{R}$ is compact in $\dot{J}\times X$, it follows from a compactness argument that there exist uniform constants $M,R$ such that \eqref{S4: Cauchy est} holds for all $(s,u_0(t))$ with $s,t\in I$. We conclude that for every $\lambda_0$
\begin{align*}
W_{\kappa}(t+\xi(t)\lambda,u)=\sum\limits_{\beta} \frac{1}{\beta!} D^{\beta}W_{\kappa}(t+\xi(t)\lambda_0,u_0(t))((\lambda,u(t))-(\lambda_0,u_0(t)))^{\beta}
\end{align*}
converges in $C(I,Y)$ for sufficiently small $r>0$ and $(u,\lambda)\in \mathbb{B}_{C(I,Y)}(u_0,r)\times\B$. This implies 
\begin{align*}
\Phi\in C^{\omega}(C^1(I,X)\times\B, C(I,Y)\times Y). 
\end{align*}
The implicit function theorem and a similar argument to Theorem~\ref{main theorem-iff} now yield 
\begin{align}
\label{S4: h-reg}
\hat{h}_{\kappa}\in C^{\omega}(\mathring{J}_1,X). 
\end{align}
Likewise, we also consider $\hat{h}_{\kappa}$ to be define on $\breve{B}$. Then $\hat{h}_{\kappa}\in C^{\omega}(\mathring{J}_1,C(\breve{B},\Q))$.
\smallskip\\
(ii) $\hat{h}_{\kappa}$ satisfies the following equation for every $x\in \breve{B}$ on $J_1$: 
\begin{align}
\label{S4: h-ptwise-eq}
\begin{cases}
\frac{d}{dt} {h}_{\kappa}(t)(x)=-{W}_{\kappa}(t,{h}_{\kappa}(t)(x)),\\
h_{\kappa}(0)(x)=x.
\end{cases}
\end{align}
\eqref{S4: reg of Wk} yields $W_{\kappa}\in C^{0,1}(J\times\Q,\R^m)$. By \cite[Theorem~9.2]{Ama90}, $\hat{h}_{\kappa}(t):B\rightarrow \Q$: $x\mapsto \hat{h}_{\kappa}(t)(x)$ is continuously differentiable with respect to $x$ for every $t\in J_1$. Let $A(t):=-D_2 W_{\kappa}(t,\hat{h}_{\kappa}(t))$. Consider the following ODE on $I$:
\begin{align}
\label{S4: part der}
\begin{cases}
\frac{d}{dt} v(t)=A(t)v(t),\\
v(\varepsilon)=\partial_i \hat{h}_{\kappa}(\varepsilon).
\end{cases}
\end{align}
Since $A \in C(J_1,\L(Y))$ and $\partial_i \hat{h}_{\kappa}(\varepsilon)\in Y$, by \cite[Remarks~7.10(c)]{Ama90} there is a unique global solution $\hat{v}\in C^1(I,Y)$. In particular, ${\rh}\hat{v}$ satisfies
\begin{align*}
\partial_t ({\rh}\hat{v})=(1+\xi^{\prime}(t)\lambda)A(t+\xi(t)\lambda){\rh}\hat{v}(t)=(1+\xi^{\prime}(t)\lambda){\rh}A(t){\rh}\hat{v}(t).
\end{align*}
\eqref{S4: reg of Wk} and \eqref{S4: h-reg} imply that $A \in C^{\omega}(\mathring{J}_1,\L(Y))$. Therefore, it follows that
\begin{align*}
[\lambda\mapsto {\rh}A]\in C^\omega(\B, C(I,\L(Y))).
\end{align*}
By \cite[Remarks~7.10(c)]{Ama90} and an implicit function theorem argument similar to step (i), we can thus infer that 
\begin{align*}
[t\mapsto \hat{v}(t)]\in C^{\omega}(\mathring{J}_1,Y).
\end{align*}
An easy computation shows that $\partial_i \hat{h}_{\kappa}$ satisfies \eqref{S4: part der} point-wise on $B$, i.e., it solves 
\begin{align*}
\begin{cases}
\partial_t v(t,x)=-D_2 W_{\kappa}(t,\hat{h}_{\kappa}(t)(x))v(t,x),\\
v(0,x)=e_i
\end{cases}
\end{align*}
for all $x\in B$. By uniqueness of the solution to the above ODE, we infer that 
\begin{align*}
\partial_i \hat{h}_{\kappa}=\hat{v}\in C^{\omega}(\mathring{J}_1,Y),\hspace{1em}i=1,\cdots,m.
\end{align*}
We obtain \eqref{S4: reg of h & h_i} by arguing similarly for all continuations of $\hat{h}_{\kappa}$ in all local patches.
\smallskip\\
(iii) Theorem~\ref{Analyticity of Ricci-DeTurck} implies that ${\kfk}\hat{g}\in C^{\omega}(\dot{J}\times\Q,S\!E)$, which in turn yields
\begin{align*}
{\kfk}\hat{g}\in C^{\omega}(\dot{J}\times C(B,\Q),C(B,S\!E)),\hspace{1em}\kappa\in\mathfrak{K}.
\end{align*}
Then we attain
\begin{align*}
({\kfk}\phi_t^{\ast}\hat{g})_{ij}(t,x)=({\kfk}\hat{g})_{kl}(t,\hat{h}_{\kappa}(t)(x))(\partial_i \hat{h}_{\kappa}(t)(x))^k (\partial_j \hat{h}_{\kappa}(t)(x))^l.
\end{align*}   
Here $(\partial_i \hat{h}_{\kappa})^k$ denotes the $k$-th entry of $\partial_i \hat{h}_{\kappa}$, and likewise for $(\partial_j \hat{h}_{\kappa})^l$. \eqref{S4: reg of h & h_i} implies 
\begin{align*}
({\kfk}\phi_t^{\ast}\hat{g})_{ij}\in C^{\omega}(\dot{J}, C(B)).
\end{align*}
Without loss of generality, we may assume that $(\pi_{\kappa})_{\kappa}$ is subordinate to the open cover $(\psi_{\kappa}(\mathring{B}))_{\kappa}$ of $\M$, see \cite[Lemma~3.2]{Ama13} for justification. Then we obtain
\begin{align*}
{\kfk}\pi_{\kappa}\phi_t^{\ast}\hat{g}={\kfk}\pi_{\kappa}{\kfk}\phi_t^{\ast}\hat{g}\in C^{\omega}(\dot{J}, BC(\R^m,S\!E)).
\end{align*}
Now the statement follows from Proposition~\ref{S2: retraction of H-LH}.
\end{proof} 
\begin{remark}
The result in Theorem~\ref{Analyticity of Ricci-DeTurck} can be extended to non-compact $C^{\omega}$-uniformly regular Riemannian manifolds, as long as $g_0$ is $bc^{2+\alpha}$-continuous for some $\alpha\in (0,1)$ and $g_0\sim \tilde{g}$.
\end{remark}
\medskip

\section{\bf The Surface Diffusion Flow}

The problem of the surface diffusion flow aims at finding a family of smooth immersed oriented hypersurfaces $\Gamma=\{\Gamma(t): t\geq{0}\}$ in ${\R}^{m+1}$ satisfying the equation:

\begin{equation}
\label{SDF equation 1}
\begin{cases}
V(t)=-\Delta_{\Gamma(t)}H_{\Gamma(t)},\\
\Gamma(0)=\Gamma_0{.}
\end{cases}
\end{equation}
Here $V(t)$ denotes the velocity in the normal direction of $\Gamma$ at time $t$ and $H_{\Gamma(t)}$ stands for the mean curvature of $\Gamma(t)$. $\Delta_{\Gamma(t)}$ is the Laplace-Beltrami operator on $\Gamma(t)$.  We choose the orientation induced by the outer normal so that $V(t)$ is positive when the enclosed region is growing and $H_{\Gamma(t)}$ is negative while the enclosed region is convex. 

If we start from an compact closed embedded initial hypersurface $\Gamma_0$ belonging to the class $bc^{s}$ for some $s>2$, then by the discussion in \cite[Section~4]{PruSim13} we can find a $m$-dimensional real analytic compact closed embedded oriented hypersurface $(\M,g)$ with $g$ as the Euclidean metric on $\M$, a function $\rho_0 \in bc^s(\M) $ and a parametrization 
\begin{align*}
\Psi_{\rho_0}:{\M}\rightarrow{\R}^{m+1},\hspace{.5em} \Psi_{\rho_0}(\p):=\p+\rho_0(\p){\nu}_{\M}(\p)
\end{align*}
such that $\Gamma_0={\im}(\Psi_{\rho_0})$. Here ${\nu}_{\M}(\p)$ denotes the unit normal with respect to a chosen orientation of $\M$ at $\p$, and $\rho_0:{\M}\rightarrow (-a,a)$ is a real-valued function on ${\M}$, where $a$ is a sufficiently small positive number depending on the inner and outer ball condition of ${\M}$. The reader may consult \cite[Section~4.1]{PruSim13} for the precise bound of $a$. Thus $\Gamma_0$ lies in the $a$-tubular neighborhood of ${\M}$. In fact, it will suffice to assume $\Gamma_0$ to be a $C^2$-manifold for the existence of such a parametrization and a real analytic reference manifold. See \cite[Section~4]{PruSim13} for a detailed proof.
\smallskip\\
Analogously, if $\Gamma(t)$ is $C^1$-close enough to ${\M}$, then we can find a function $\rho:[0,T)\times{\M}\rightarrow(-a,a)$ for some $T>0$ and a parametrization
\begin{align}
\label{S4: Ref-mfd & para}
\Psi_{\rho}:{[0,T)}\times{\M}\rightarrow{\R}^{m+1},\hspace{.5em} \Psi_{\rho}(t,\p):=\p+\rho(t,\p){\nu}_{\M}(\p)
\end{align}
such that $\Gamma(t)={\im}(\Psi_{\rho}(t,\cdot))$ for every $t\in [0,T)$. 
\smallskip\\
For any fixed $t$, we do not distinguish between $\rho(t,\cdot)$ and $\rho(t,\psi_{\kappa}(\cdot))$ in each local coordinate $({\Uk},\varphi_{\kappa})$ and abbreviate $\Psi_{\rho}(t,\cdot)$ to be $\Psi_{\rho}:=\Psi_{\rho}(t,\cdot)$. In addition, the hypersurface $\Gamma(t)$ will be simply written as $\Gamma_{\rho}$ as long as the choice of $t$ is of no importance in the context, or $\rho$ is independent of $t$. 
\smallskip\\
Let $0<\alpha<1$. We define 
\begin{align*}
E_0:=bc^{\alpha}({\M}),\hspace{1em} E_1:=bc^{4+\alpha}({\M}) \hspace{.5em}\text{ and }\hspace{.5em}E_{\frac{1}{2}}:=(E_0,E_1)^0_{\frac{1}{2},\infty}.
\end{align*}
By Proposition~\ref{interpolation result}, $E_{\frac{1}{2}}=bc^{2+\alpha}(\M)$. Put 
\begin{align*}
\mho:=\{\rho\in E_{\frac{1}{2}}: \|\rho\|_{\infty}^{\M}<a\}. 
\end{align*}
For any $\rho\in \mho$, ${\im}(\Psi_{\rho})$ constitutes a $bc^{2+\alpha}$-hypersurface $\Gamma_{\rho}$. In this case, $\Psi_{\rho}$ defines a $bc^{2+\alpha}$-diffeomorphism from ${\M}$ onto $\Gamma_{\rho}$. 
\smallskip\\
Here and in the following, it is understood that the Einstein summation convention is employed and all the summations run from $1$ to $m$ for all repeated indices. 
\smallskip\\
For simplification, we set $H_{\rho}:=\Psi_{\rho}^{\ast}H_{\Gamma_{\rho}}$. Besides, we have $\Psi_{\rho}^{\ast}\Delta_{\Gamma_{\rho}}=\Delta_{\rho}\Psi_{\rho}^{\ast}$, where $\Delta_{\Gamma_{\rho}}$ and $\Delta_{\rho}$ are the Laplace-Beltrami operators on $(\Gamma_{\rho},g_{\Gamma})$ and $({\M},\sigma(\rho))$. Here $g_{\Gamma}$ is the Euclidean metric on $\Gamma_{\rho}$, and $\sigma(\rho):=\Psi_{\rho}^{\ast}g_{\rho}$ with $\Psi_{\rho}^{\ast}g_{\Gamma}$ standing for the pull-back metric of $g_{\Gamma}$ on ${\M}$. Then
\begin{align*}
\Delta_{\rho}=\sigma^{jk}(\rho)(\partial_j\partial_k-\gamma^i_{jk}(\rho)\partial_i).
\end{align*}
Here $\sigma^{jk}(\rho)$ are the components of the induced metric $\sigma^{\ast}(\rho)$ on the cotangent bundle. Note that $\sigma^{jk}(\rho)$ involves the derivatives of $\rho$ merely up to order one. $\gamma^i_{jk}(\rho)$ are the corresponding Christoffel symbols of $\sigma(\rho)$, which contain the derivatives of $\rho$ up to second order.
\smallskip\\
In \cite{Shao13}, the author derives an expression for $H_{\rho}=P_1(\rho)\rho+F_1(\rho)$:
\begin{align*}
F_1(\rho)=\frac{\beta(\rho)}{2} {g^{ij}_{\Gamma}}({l_{ij}}-\rho{l_{ik}}{l^{k}_{j}})=\frac{\beta(\rho)}{2} {\rm{Tr}}\{[g_{\Gamma}]^{-1}(L^{\M}-\rho{L^{\M}}L_{\M})\},
\end{align*}
where ${\rm{Tr}}(\cdot)$ denotes the trace operator, 
and
\begin{align*}
P_1(\rho)&=\frac{\beta(\rho)}{2}\{{g^{ij}_{\Gamma}}\partial_{ij}
+{g^{ij}_{\Gamma}}({l^{k}_{j}}\partial_{i}\rho-\Gamma^{k}_{ij})\partial_{k}
\\
&\hspace*{1em}+{g^{ij}_{\Gamma}}[r_k^l(\rho)l^{k}_{i}\partial_{j}\rho +
r_k^l(\rho)(\partial_{j}l_{i}^{k}+\Gamma^{k}_{jh}l_{i}^{h}-\Gamma^{h}_{ij}l_{h}^{k})\rho 
+r_k^l(\rho)l^{h}_{j}l^{k}_{h}\rho\partial_{i}\rho ]\partial_{l} \}
\end{align*}
in every local chart. Here $g^{ij}_{\Gamma}$ are the components of the induced metric by $g_{\Gamma}$ on the cotangent bundle, and 
\begin{align}
\label{S5: r^j_i}
r_i^j(\rho)=\frac{P_{i}^{j}(\rho)}{Q_{i}^{j}(\rho)}
\end{align}
in every local chart, where $P_{i}^{j}$ and $Q_{i}^{j}$ are polynomials in $\rho$ with $BC^{\infty}\cap C^{\omega}$-coefficients and $Q_{i}^{j}\neq{0}$. Meanwhile, 
\begin{align}
\label{S5: beta}
\beta_{\kappa}(\rho):={\kfk}\beta(\rho)=[1+g^{ik} r_i^j(\rho) r_k^l(\rho) \partial_{j}\rho \partial_{l}\rho]^{-1/2}. 
\end{align}
In particular, $\beta(\rho)\leq 1$ for any $\rho\in  \mho$. Note that in every local chart 
\begin{align*}
\beta_{\kappa}^2(\rho)=\frac{P^{\beta}(\rho)}{Q^{\beta}(\rho)}, 
\end{align*}
where $P^{\beta}(\rho)$ is a polynomial in $\rho$ with $BC^{\infty}\cap C^{\omega}$-coefficients and $Q^{\beta}(\rho)\neq{0}$ is a polynomial in $\rho$ and $\partial_j \rho$ with $BC^{\infty}\cap C^{\omega}$-coefficients. $\Gamma^{k}_{ij}$ are the Christoffel symbols of the metric $g$. $l^i_j$ and $l_{ij}$ are the components of the Weingarten tensor $L_{\M}$ and the second fundamental form $L^{\M}$ of ${\M}$ with respect to $g$, respectively. In particular, we have
\begin{equation}
\label{S5: gamma_ij}
g^{\Gamma}_{ij}=g_{ij}-2\rho{l}_{ij}+\rho^{2}l^{r}_{i}l_{jr}+\partial_{i}\rho\partial_{j}\rho.
\end{equation} 
The reader may refer to \cite{EscMaySim98, EscSim9802} for a different analysis of the mean curvature operator. Now the first line of equation \eqref{SDF equation 1} is equivalent to
\begin{align*}
\rho_{t}=-\frac{1}{\beta(\rho)}\Psi^{\ast}_{\rho}\Delta_{\Gamma_{\rho}}H_{\Gamma_{\rho}}=-\frac{1}{\beta(\rho)}\Delta_{\rho}H_{\rho}.
\end{align*}
There exists a global operator $R(\rho)\in{\mathcal{L}(bc^{3+\alpha}({\M}),E_0)}$ such that $R$ is well defined on $ \mho$ and:
\begin{align*}
R(\rho)\rho=\frac{1}{2\beta(\rho)}\Delta_{\rho}[\beta(\rho){\rm{Tr}}([g_{\Gamma}]^{-1}L^{\M})]-\frac{\rho}{2\beta(\rho)}\Delta_{\rho}[\beta(\rho){\rm{Tr}}([g_{\Gamma}]^{-1}{L^{\M}}L_{\M})]{.}
\end{align*}
We set
\begin{align*}
&P(\rho):=\frac{1}{\beta(\rho)}\Delta_{\rho}P_1(\rho)+R(\rho),\hspace*{2.4em}\rho\in \mho{,}\\
&F(\rho):=-\frac{1}{\beta(\rho)}\Delta_{\rho}F_1(\rho)+R(\rho)\rho,\hspace*{1em}\rho\in \mho\cap{bc^{3+\alpha}({\M})}.
\end{align*}
Note that third order derivatives of $\rho$ do not appear in $F(\rho)$. Hence it is well-defined on $ \mho$. On account of \eqref{S5: r^j_i}-\eqref{S5: gamma_ij}, \cite[Proposition~1]{Browd62} and \cite[Proposition~6.3]{ShaoSim13}, one can verify that
\begin{align*}
(P,F) \in C^{\omega}(\mho, \mathcal{L}(E_1,E_0)\times E_0).
\end{align*}
For $\rho_0\in \mho$, now the surface diffusion flow equation \eqref{SDF equation 1} can be rewritten as:
\begin{equation}
\label{SDF equation 2}
\begin{cases}
\rho_t+P(\rho)\rho=F(\rho),\\
\rho(0)=\rho_0{,}
\end{cases}
\end{equation}
Given $(x,\xi)\in \Q\times S^{m-1}$ with $|\xi|=1$, estimate the principal symbol of $(P(\rho))_{\kappa}$:
\begin{align*}
&\hat{\sigma}(P(\rho))^{\pi}_{\kappa}(x,\xi)=\frac{1}{2}\sigma^{sr}(\rho(x))\xi_{s}\xi_{r}g_{\Gamma}^{ij}(x)\xi_{i}\xi_{j}\geq{c|\xi|^4}
\end{align*}
in every local chart $(\Uk,\varphi_{\kappa})$ for some $c>0$ by the compactness of $\M$. Thus $P(\rho)$ is normally elliptic for any $\rho\in \mho$. By \cite[Theorem~3.6]{ShaoSim13}, we infer that 
\begin{align*}
P(\rho)\in\mathcal{M}_{\frac{1}{2}}(E_1,E_0),\hspace{1em}\text{ for any }\rho\in \mho.
\end{align*}
\smallskip
Owing to \cite[Theorem~4.1]{CleSim01}, we can restate the result in \cite{EscMaySim98} as follows:
\begin{theorem}
\label{existence and uniqueness of SDF}
For any $\rho_0\in \mho:=\{\rho\in bc^{2+\alpha}(\M):\|\rho\|^{\M}_{\infty}< a \}$ for some $\alpha\in (0,1)$ and sufficiently small $a>0$, there exists a unique solution $\hat{\rho}$ to equation \eqref{SDF equation 2} with maximal interval of existence $J(\rho_0):=[0,T(\rho_0))$:
\begin{align*}
\hat{\rho}\in C^1_{\frac{1}{2}}(J(\rho_0),bc^{\alpha}(\M))\cap C_{\frac{1}{2}}(J(\rho_0),bc^{4+\alpha}(\M))\cap C(J(\rho_0),\mho).
\end{align*}
\end{theorem}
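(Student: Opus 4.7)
The plan is to apply the abstract semilinear parabolic existence theorem \cite[Theorem~4.1]{CleSim01} directly to the reformulation \eqref{SDF equation 2}, using the two structural facts just established in this section: $(P, F) \in C^{\omega}(\mho, \L(E_1, E_0) \times E_0)$ and $P(\rho) \in \mathcal{M}_{\frac{1}{2}}(E_1, E_0)$ for every $\rho \in \mho$. The weight $\gamma = \frac{1}{2}$ is dictated by the identification of the trace space: by Proposition~\ref{interpolation result} one has $E_{\frac{1}{2}} = (E_0, E_1)^0_{\frac{1}{2}, \infty} = bc^{2+\alpha}(\M)$, which matches exactly the regularity class of the initial datum $\rho_0$.

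First I would verify that $\mho$ is open in $E_{\frac{1}{2}}$. Since $bc^{2+\alpha}(\M) \hookrightarrow BC(\M)$ by Proposition~\ref{embedding theory}, the functional $\rho \mapsto \|\rho\|_\infty^{\M}$ is continuous on $E_{\frac{1}{2}}$, so the strict inequality $\|\rho\|_\infty^{\M} < a$ cuts out an open set. Real analyticity of $P$ and $F$ on $\mho$ then delivers local Lipschitz continuity on every closed ball contained in $\mho$, while the verified membership $P(\rho) \in \mathcal{M}_{\frac{1}{2}}(E_1, E_0)$ provides continuous maximal regularity of the linearized problem at each state. With these pieces in place, the hypotheses of \cite[Theorem~4.1]{CleSim01} are fulfilled, producing a unique local solution
\[
\hat\rho \in C^1_{\frac{1}{2}}([0, T], bc^{\alpha}(\M)) \cap C_{\frac{1}{2}}([0, T], bc^{4+\alpha}(\M)) \cap C([0, T], \mho)
\]
on some short interval $[0, T]$.

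To pass to the maximal interval I would invoke a standard prolongation argument: as long as $\hat\rho(t)$ remains inside a closed subset of $\mho$, the embedding $C^1_{\frac{1}{2}} \cap C_{\frac{1}{2}} \hookrightarrow C([0, T], E_{\frac{1}{2}})$ allows one to restart the local existence theorem at any later time using $\hat\rho(t)$ as a new initial datum; iterating yields the maximal interval of existence $J(\rho_0) = [0, T(\rho_0))$ in the asserted class.

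Since the statement is explicitly billed as a reformulation of \cite{EscMaySim98} within the continuous maximal regularity framework of \cite{CleSim01}, no genuinely new analysis is required. The only mild care point — which I do not expect to constitute a real obstacle — is to confirm that the weighted spaces $C^1_{1-\gamma}$ and $C_{1-\gamma}$ defined at the beginning of Section~4 coincide, at $\gamma = \frac{1}{2}$, with the trace-weighted spaces in the semilinear framework of \cite{CleSim01}. This is a purely notational check.
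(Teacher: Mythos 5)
Your proposal is correct and follows essentially the same route as the paper, which simply invokes \cite[Theorem~4.1]{CleSim01} together with the facts $(P,F)\in C^{\omega}(\mho,\mathcal{L}(E_1,E_0)\times E_0)$ and $P(\rho)\in\mathcal{M}_{\frac{1}{2}}(E_1,E_0)$ established immediately beforehand; you merely spell out the routine verifications (openness of $\mho$, Lipschitz continuity from analyticity, prolongation) that the paper leaves implicit. The only quibble is terminological: the problem $\rho_t+P(\rho)\rho=F(\rho)$ is quasilinear rather than semilinear, but the hypotheses you check are exactly those of the quasilinear framework, so nothing is affected.
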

\goodbreak
We are now ready to prove the analyticity of the solution $\hat{\rho}$ as we did for the Ricci flow. Set $G(\rho):=P(\rho)\rho-F(\rho)$ for $\rho\in E_1\cap \mho$. Given $(t_0,\p)\in\dot{J}(\rho_0)\times{\M}$, we define ${\ttl}$ within ${\B}$ for sufficiently small $r$. Henceforth, we always use the notation $\hat{\rho}$ exclusively for the solution to \eqref{SDF equation 2}. Set $u:={\hat{\rho}}_{\lambda,\mu}$. Then $u$ satisfies the equation
\begin{align*}
u_t=-(1+\xi^{\prime}\lambda){\tu}G({\tui}u)+{B}_{\lambda,\mu}(u).
\end{align*}
Choose $J:=[\varepsilon,T]\subset\dot{J}(\rho_0)$ with $t_0\in \mathring{J}$. We put
\begin{align*}
{\ez}(J):=C(J,E_0),\hspace{1em} {\ef}(J):=C(J,E_1)\cap{C^1(J,E_0)},
\end{align*}
and
\begin{align*}
{\Ub}(J):=\{u\in{\ef}(J):\|u\|^{J\times\M}_{\infty}<a\}.
\end{align*}
Define a map $\Phi:{\Ub}(J)\times{\B}\rightarrow{\ez}(J)\times{E_1}$ as
\begin{align*}
\Phi(\rho,(\lambda,\mu))\mapsto\dbinom{\rho_t+(1+\xi^{\prime}\lambda){\tu}G({\tui}\rho)-{B}_{\lambda,\mu}(\rho)}{\gamma_{\varepsilon}(\rho)-\hat{\rho}(\varepsilon)}.
\end{align*}
Note that $\Phi(\hat{\rho}_{\lambda,\mu},(\lambda,\mu))=\dbinom{0}{0}$ for any $(\lambda,\mu)\in{\B}$.
\smallskip\\
(i) As discussed in Section 4, one obtains that
\begin{align*}
D_1\Phi(\hat{\rho},(0,0))\rho=\dbinom{v_t+DG(\hat{\rho})\rho}{\gamma_{\varepsilon}\rho}.
\end{align*}
The principal part of $DG(\hat{\rho})$ coincides with that of $P(\hat{\rho})$. Therefore, it follows from \cite[Theorem~3.6]{ShaoSim13} and an analogous argument to Lemma~\ref{section 7: MR of linearization} that
\begin{align*}
D_1\Phi(\hat{\rho},(0,0))\in {\Lis}({\Ub}(J)\times{\B},{\ez}(J)\times E_1).
\end{align*}
(ii) Adopting the decomposition in Section 4, we obtain that
\begin{align*}
G=\sum_{\kappa\in\mathfrak{K}}\tilde{\pi}_{\kappa}G. 
\end{align*}
By the proceeding discussion, the local expression of $\tilde{\pi}_{\kappa}G(\rho)$ in $({\Uk},\varphi_{\kappa})$ reads as
\begin{align*}
(\tilde{\pi}_{\kappa}G(\rho))_{\kappa}=\beta_{\kappa}^{2n}(\rho)\frac{{\kf}\tilde{\pi}_{\kappa}\mathcal{S}_{\kappa}(\rho,\cdots,\partial_{ijkl}\rho)}{\det[g_{\Gamma}]^{s_1} \det[\sigma(\rho)]^{s_2} \mathcal{B}_{\kappa}(\rho)}
\end{align*}
for every $\rho\in{\Ub}(J)$ and some $n,s_1,s_2\in\N$. Here $\mathcal{S}_{\kappa}$ is a polynomial of $\rho$ and its derivatives up to fourth order whose coefficients belong to $C^{\omega}(\Q)\cap BC^{\infty}(\Q)$. Meanwhile, $\mathcal{B}_{\kappa}(\rho)$ is a polynomial of $\rho$ with coefficients in $C^{\omega}(\Q)\cap BC^{\infty}(\Q)$. Observe that the components of the metric $g_{\Gamma}$ and $\sigma(\rho)$ are polynomials of $\rho$ and $\partial_j \rho$ with coefficients belonging to $C^{\omega}(\Q)\cap BC^{\infty}(\Q)$. Thus an analogous argument as for the Ricci flow applies to the scalar function $\tilde{\pi}_{\kappa}G(\rho)$. It implies that
\begin{align*}
\Phi\in{C^{\omega}({\Ub}(J)\times{\B},{\ez}(J)\times{E_1})}.
\end{align*}

Now the implicit function theorem and Theorem~\ref{S1: Theorem} yields the main theorems of this section:
\begin{theorem}
\label{S5: analyticity of SDF}
The solution $\hat{\rho}$ in Theorem~\ref{existence and uniqueness of SDF} satisfies $\hat{\rho}\in C^{\omega}(\dot{J}(\rho_0)\times\M)$.
\end{theorem}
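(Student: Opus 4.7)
The plan is to apply the implicit function theorem to the map $\Phi$ at the reference point $(\hat\rho,(0,0))$ constructed above, and then to transfer the resulting parameter-regularity of $\hat\rho_{\lambda,\mu}$ into joint space-time analyticity of $\hat\rho$ via Theorem~\ref{S1: Theorem}. This mirrors the pipeline already executed for the Ricci-DeTurck flow in Section~4.

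First, I would verify condition (i). The linearization
$D_1\Phi(\hat\rho,(0,0))\rho = (\rho_t + DG(\hat\rho)\rho,\; \gamma_\varepsilon\rho)^T$
reduces the problem to maximal regularity for $\partial_t + DG(\hat\rho(t))$. Since the principal part of $DG(\hat\rho)$ coincides with that of $P(\hat\rho)$, which is normally elliptic by the symbol estimate $\hat\sigma(P(\rho))^{\pi}_\kappa(x,\xi) \geq c|\xi|^4$ already observed above, the inclusion $P(\rho)\in\mathcal{M}_{\frac{1}{2}}(E_1,E_0)$ from \cite{ShaoSim13} yields pointwise continuous maximal regularity along $\hat\rho$; the non-autonomous upgrade via \cite[Lemma~2.8(a)]{CleSim01} then delivers $D_1\Phi(\hat\rho,(0,0)) \in \Lis(\ef(J), \ez(J)\times E_1)$.

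Next I would verify analyticity of $\Phi$, which is the main technical point and where I expect the bookkeeping to be most delicate. The linear perturbation $B_{\lambda,\mu}$ is handled by Proposition~\ref{transformed functions}. For the nonlinearity I decompose $G=\sum_\kappa \tilde\pi_\kappa G$ as in Section~4 and write each local piece as an explicit rational function in $\rho$ and its derivatives up to fourth order, with $C^{\omega}\cap BC^{\infty}$ coefficients and nonvanishing denominator (built from $\beta(\rho)$, $\det[g_\Gamma]$, $\det[\sigma(\rho)]$, and $\mathcal{B}_\kappa(\rho)$). Conjugating each elementary differential monomial by $\tu$ and applying Proposition~\ref{differentiability involving time}(b) produces analytic maps $\B \to C(J,\L(E_1,E_0))$; the pointwise multiplication estimates of Proposition~\ref{pointwise multiplication properties} together with the fact that all denominators stay bounded away from zero on $\Ub(J)$, so that the reciprocal-is-analytic device used around \eqref{section 7 eq1} applies, then assembles these pieces into $\Phi \in C^{\omega}(\Ub(J)\times\B,\, \ez(J)\times E_1)$.

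With (i) and (ii) in hand, the implicit function theorem yields $r_0>0$ with $[(\lambda,\mu)\mapsto \hat\rho_{\lambda,\mu}] \in C^{\omega}(\mathbb{B}(0,r_0),\ef(J))$. Since $E_1 = bc^{4+\alpha}(\M)\hookrightarrow BC(\M)$ by Proposition~\ref{embedding theory} and $(t_0,\p)\in\dot J(\rho_0)\times\M$ was arbitrary, Theorem~\ref{S1: Theorem} with $k=\omega$ delivers $\hat\rho \in C^{\omega}(\dot J(\rho_0)\times\M)$, completing the proof.
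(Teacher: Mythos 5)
Your proposal is correct and follows essentially the same route as the paper: the same map $\Phi$ on $\Ub(J)\times\B$, the same verification that $D_1\Phi(\hat{\rho},(0,0))$ is an isomorphism via the coincidence of the principal parts of $DG(\hat{\rho})$ and $P(\hat{\rho})$ together with the Cl\'ement--Simonett lemma, the same decomposition $G=\sum_{\kappa}\tilde{\pi}_{\kappa}G$ into local rational expressions with non-vanishing denominators to get $\Phi\in C^{\omega}$, and the same conclusion via the implicit function theorem and Theorem~\ref{S1: Theorem}. No gaps to report.
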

\begin{cor}
\label{S5: unique continuation}
Suppose that $\rho$ and $\tilde{\rho}$ are two solutions to \eqref{SDF equation 2} with initial data $\rho_0,\tilde{\rho}_0\in\mho$ with $\mho$ defined as in Theorem~\ref{existence and uniqueness of SDF} for some $\alpha\in (0,1)$ on $J:=[0,T)$ belonging to the class $C^1_{\frac{1}{2}}(J,bc^{\alpha}(\M))\cap C_{\frac{1}{2}}(J,bc^{4+\alpha}(\M))$. If there exists some $t_0\in\dot{J}$ such that $\rho(t_0)=\tilde{\rho}(t_0)$, then $\rho(t)\equiv \tilde{\rho}(t)$ on $J$.
\end{cor}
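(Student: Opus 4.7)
The plan is to combine the joint real analyticity established in Theorem~\ref{S5: analyticity of SDF} with the forward uniqueness of Theorem~\ref{existence and uniqueness of SDF} through an analytic continuation argument. Forward uniqueness at $t_0$ will give agreement of $\rho$ and $\tilde{\rho}$ on a right neighbourhood of $t_0$, and pointwise-in-space real analyticity in the time variable will then propagate this agreement to all of $\dot{J}$, after which continuity at $t=0$ will complete the argument.

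By Theorem~\ref{S5: analyticity of SDF}, both $\rho$ and $\tilde{\rho}$ belong to $C^{\omega}(\dot{J}\times\M)$, so for every fixed $\p\in\M$ the scalar maps $t\mapsto\rho(t,\p)$ and $t\mapsto\tilde{\rho}(t,\p)$ are real analytic on the connected open interval $\dot{J}$. Since $\rho,\tilde{\rho}\in C(J,\mho)$, the common value $\rho_1:=\rho(t_0)=\tilde{\rho}(t_0)$ lies in $\mho$. As equation \eqref{SDF equation 2} is autonomous, a time-shift by $t_0$ reduces the problem, starting from $\rho_1$, to the setting of Theorem~\ref{existence and uniqueness of SDF}, which yields a unique local solution on some $[t_0,t_0+\delta)\subset\dot{J}$ within the regularity class $C^{1}_{\frac{1}{2}}\cap C_{\frac{1}{2}}$. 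Both $\rho|_{[t_0,t_0+\delta)}$ and $\tilde{\rho}|_{[t_0,t_0+\delta)}$ are such solutions, hence they coincide on $[t_0,t_0+\delta)$.

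Fixing $\p\in\M$, the real analytic function $t\mapsto\rho(t,\p)-\tilde{\rho}(t,\p)$ on the connected open interval $\dot{J}$ vanishes on the non-empty open subset $(t_0,t_0+\delta)$, so the classical identity principle for real analytic functions of one variable forces it to vanish on all of $\dot{J}$. Varying $\p\in\M$, we obtain $\rho(t)=\tilde{\rho}(t)$ for every $t\in\dot{J}$. Finally, since $\rho,\tilde{\rho}\in C(J,\mho)\hookrightarrow C(J,BC(\M))$, continuity at $t=0$ yields $\rho_0=\tilde{\rho}_0$ as well, so $\rho\equiv\tilde{\rho}$ on the whole of $J$.

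The main conceptual step is the backward propagation from $t_0$ to $[0,t_0]$: direct parabolic uniqueness is one-sided and only produces coincidence on a right neighbourhood of $t_0$, whereas the conclusion demands coincidence on all of $J$. This asymmetry is removed precisely by the pointwise temporal analyticity supplied by Theorem~\ref{S5: analyticity of SDF}, which is where the machinery developed in Section~3 and the implicit function theorem argument preceding the corollary feed in.
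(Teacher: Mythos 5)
Your proof is correct and is precisely the argument the paper intends: the corollary is stated without proof as an immediate consequence of Theorem~\ref{S5: analyticity of SDF} together with the uniqueness assertion of Theorem~\ref{existence and uniqueness of SDF}, namely forward uniqueness from $t_0$ to obtain an open interval of coincidence, the identity principle for the real analytic maps $t\mapsto\rho(t,\p)-\tilde{\rho}(t,\p)$ on $\dot{J}$ to propagate backward, and continuity at $t=0$. You also correctly identify the one genuinely necessary ingredient beyond analyticity, namely that agreement at the single time $t_0$ must first be upgraded to agreement on an open interval before analytic continuation can be invoked.
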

\begin{theorem}
\label{S5: SDF: main theorem}
Suppose that $\Gamma_0$ is a compact closed embedded oriented hypersurface in ${\R}^{m+1}$ belonging to the class $C^{2+\alpha}$ for some $\alpha\in (0,1)$. Then the surface diffusion flow \eqref{SDF equation 1} has a local solution $\Gamma=\{\Gamma(t):t\in[0,T)\}$ for some $T>0$. Moreover,
\begin{align*}
\mathcal{M}:=\bigcup_{t\in(0,T)}(\{t\}\times\Gamma(t))
\end{align*}
is a real analytic hypersurface in ${\R}^{m+2}$. In particular, each manifold $\Gamma(t)$ is real analytic for $t\in(0,T)$. 
\end{theorem}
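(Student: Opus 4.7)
The plan is to reduce the geometric problem \eqref{SDF equation 1} to the parametric scalar equation \eqref{SDF equation 2}, invoke the analyticity result Theorem~\ref{S5: analyticity of SDF}, and then lift real analyticity from the height function back to the space--time trace $\mathcal{M}$.

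First I would parametrize $\Gamma_0$ over an analytic reference manifold. Since $\Gamma_0$ is compact, closed, embedded and of class $C^{2+\alpha}$, the construction of \cite{PruSim13} recalled at the start of Section~5 produces a real analytic compact closed embedded oriented hypersurface $\M\subset\R^{m+1}$ (with real analytic outward unit normal $\nu_\M$), a tubular radius $a>0$, and a function $\rho_0\in bc^{2+\alpha}(\M)$ with $\|\rho_0\|_\infty^\M<a$ such that $\Gamma_0=\Psi_{\rho_0}(\M)$. Thus $\rho_0\in\mho$, so Theorem~\ref{existence and uniqueness of SDF} produces the unique maximal solution $\hat{\rho}$ of \eqref{SDF equation 2} on $J(\rho_0)=[0,T)$ for some $T>0$ which remains in $\mho$. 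Setting $\Gamma(t):=\Psi_{\hat{\rho}(t,\cdot)}(\M)$ yields the desired local solution $\{\Gamma(t):t\in[0,T)\}$ of \eqref{SDF equation 1}, each $\Gamma(t)$ being a $C^{2+\alpha}$-hypersurface because $\Psi_{\hat{\rho}(t,\cdot)}$ is a $bc^{2+\alpha}$-diffeomorphism.

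Next I would upgrade the regularity to real analyticity on $\mathcal{M}$. Theorem~\ref{S5: analyticity of SDF} already supplies $\hat{\rho}\in C^\omega(\dot{J}(\rho_0)\times\M)$. Since $\M$ and $\nu_\M$ are real analytic, the map
\[
F:\dot{J}(\rho_0)\times\M\longrightarrow\R^{m+2},\qquad F(t,\p):=\bigl(t,\;\p+\hat{\rho}(t,\p)\,\nu_\M(\p)\bigr),
\]
is a composition of real analytic maps and hence belongs to $C^\omega$. For each fixed $t\in(0,T)$ the spatial slice $\p\mapsto\p+\hat{\rho}(t,\p)\nu_\M(\p)=\Psi_{\hat{\rho}(t,\cdot)}(\p)$ is injective by the tubular neighborhood property $\|\hat{\rho}(t,\cdot)\|_\infty^\M<a$ and is an immersion because the same smallness bound keeps $d\Psi_{\hat{\rho}(t,\cdot)}$ of full rank; in the time direction, injectivity of $F$ is immediate from its first coordinate being $t$. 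Hence $F$ is a real analytic embedding of $\dot{J}(\rho_0)\times\M$ with image $\mathcal{M}$, which establishes that $\mathcal{M}$ is a real analytic hypersurface of $\R^{m+2}$. Restricting to any time slice recovers that $\Gamma(t)$ is a real analytic hypersurface of $\R^{m+1}$ for every $t\in(0,T)$.

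I do not expect a substantive obstacle: the two deep ingredients---the construction of an analytic reference manifold for a merely $C^{2+\alpha}$ initial surface and the analyticity of the height function $\hat{\rho}$---are already in hand. What remains is bookkeeping: checking that $F$ is an embedding (which boils down to the tubular neighborhood property already encoded in the definition of $\mho$) and that composition preserves analyticity, both of which are routine.
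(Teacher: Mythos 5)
Your argument follows the same route as the paper: parametrize $\Gamma_0$ over the real analytic reference manifold of \cite{PruSim13}, solve \eqref{SDF equation 2} via Theorem~\ref{existence and uniqueness of SDF}, invoke Theorem~\ref{S5: analyticity of SDF} for $\hat{\rho}\in C^{\omega}(\dot{J}(\rho_0)\times\M)$, and read off that the local parametrization $(t,x)\mapsto(t,\psi_{\kappa}(x)+\hat{\rho}(t,\psi_{\kappa}(x))\nu_{\M}(\psi_{\kappa}(x)))$ of $\mathcal{M}$ is real analytic. There is, however, one step that is false as written: from $\Gamma_0\in C^{2+\alpha}$ you conclude that the height function satisfies $\rho_0\in bc^{2+\alpha}(\M)$ and hence $\rho_0\in\mho$. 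The space $bc^{2+\alpha}$ is the \emph{little} H\"older space, i.e.\ the closure of $BC^{\infty}$ in $BC^{2+\alpha}$, and a generic $C^{2+\alpha}$ function does not lie in it; what is true is the embedding $C^{2+\alpha}(\M)\hookrightarrow bc^{2+s}(\M)$ for every $s\in(0,\alpha)$. One must therefore apply Theorems~\ref{existence and uniqueness of SDF} and~\ref{S5: analyticity of SDF} with the exponent $\alpha$ replaced by such an $s$ --- this is precisely the first sentence of the paper's proof --- after which your argument goes through unchanged, since the conclusion (analyticity of $\hat{\rho}$ and of $\mathcal{M}$) does not depend on which $s$ is used. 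Your additional verification that $F(t,\p)=(t,\p+\hat{\rho}(t,\p)\nu_{\M}(\p))$ is an embedding (injectivity and full rank from the tubular neighborhood bound $\|\hat{\rho}\|^{\M}_{\infty}<a$) is more detail than the paper records and is correct; it is harmless extra care rather than a different method.
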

\begin{proof}
Note that $C^{2+\alpha}(\M)\hookrightarrow bc^{2+s}(\M)$ for any $s\in (0,\alpha)$. For each $(t_0,q)\in\mathcal{M}=\bigcup_{t\in\dot{J}(\rho_0)}(\{t\}\times\Gamma(t))$, there exists a $\p\in{\M}$ such that $\Psi_{\rho}(t_0,\p)=q$. Here $\Gamma(t)={\im}(\Psi_{\rho}(t,\cdot))$. Theorem \ref{S5: analyticity of SDF} states that there exists a local patch $(\Uk,\varphi_{\kappa})$ such that $\p\in\Uk$ and $\rho\circ\psi_{\kappa}$ is real analytic in $\dot{J}(\rho_0)\times{\Q}$. 
Therefore, we conclude 
\begin{align*}
[(t,x)\mapsto (t,\psi_{\kappa}(x)+\rho(t,\psi_{\kappa}(x))\nu_{\M}(\psi_{\kappa}(x))]\in{C^{\omega}(\dot{J}(\rho_0)\times{\Q},\mathcal{M})}.
\end{align*}
This proves the assertion.
\end{proof}
\medskip

\section{\bf The Mean Curvature Flow}

The averaged mean curvature flow problem, or sometimes been called volume preserving mean curvature flow problem, consists in looking for a family of smooth hypersurfaces $\Gamma=\{\Gamma(t):t\geq{0}\}$ in ${\R}^{m+1}$ satisfying the following equation
\begin{align}
\label{MCF equation 1}
\begin{cases}
V(t)=H_{\Gamma(t)}-h_{\Gamma(t)},\\
\Gamma(0)=\Gamma_0{,}
\end{cases}
\end{align}
where $V(t)$ and $H_{\Gamma(t)}$ have the same meaning as in the previous section, while $h_{\Gamma(t)}$ stands for the average of the mean curvature on $\Gamma(t)$, that is, for $t\geq{0}$
\begin{align*}
h_{\Gamma(t)}:=\frac{1}{\int\limits_{\Gamma(t)}\, d V_{g_{\Gamma}}}\int\limits_{\Gamma(t)}H_{\Gamma(t)}\, d V_{g_{\Gamma}}
\end{align*}
with ${g_{\Gamma}}$ being the Euclidean metric on $\Gamma(t)$. For $0<s<\alpha<1$ and $\gamma=\frac{2+s-\alpha}{2}$, let 
\begin{align*}
E_0:=bc^{\alpha}({\M}),\hspace{1em} E_1:=bc^{2+\alpha}({\M}),\hspace{1em}, E_{\gamma}:=(E_0,E_1)^0_{\gamma,\infty}=bc^{2+s}(\M), 
\end{align*}
and
\begin{align*}
W_{s}:=\{\rho\in E_{\gamma}: \|\rho\|_{\infty}^{\M}<a\}.
\end{align*}
If we start with an initial hypersurface $\Gamma_0$ in the class $bc^{2+s}$, then we can find a real analytic compact closed embedded hypersurface $({\M},g)$ with $g$ being the Euclidean metric on $\M$ and a function $\rho:[0,T)\times{\M}\rightarrow(-a,a)$ for some $T>0$ and sufficiently small $a>0$ such that $\{\Gamma(t):t\in [0,T)\}$ can be characterized as in \eqref{S4: Ref-mfd & para}. 

We identify $\rho(t,\psi_{\kappa}(\cdot))$ with $\rho(t,\cdot)$. Let $\beta(\rho)$, $L^{\M}$, $P_1(\rho)$ and $F_1(\rho)$ be the same as in Section 5. Following \cite[formula~(25)]{PruSim13}, we set 
\begin{align*}
D(\rho):=\frac{\alpha(\rho)}{ \beta(\rho)}, \hspace*{1em}\text{ with }\hspace*{.5em}\alpha(\rho):=\det(I-\rho K^{\M}),
\end{align*}
where $K^{\M}=[g]^{-1}L^{\M}$ is the shape matrix of $(\M,g)$. Note that $\alpha(\rho)$ is a polynomial of $\rho$ with $BC^{\infty}\cap C^{\omega}$-coefficients in every local chart. Define
\begin{align*}
&P(\rho)h:=-\frac{1}{\beta(\rho)}(P_1(\rho)h - \frac{1}{\int\limits_{\M}D(\rho)\, d V_g}\int\limits_{\M}P_1(\rho)h D(\rho)\, d V_g ),\hspace{1em}\rho\in W_{s},h\in{E_1},\\
&F(\rho):=\frac{1}{\beta(\rho)}(F_1(\rho) - \frac{1}{\int\limits_{\M}D(\rho)\, d V_g}\int\limits_{\M}F_1(\rho) D(\rho)\, d V_g ),\hspace{2.5em}\rho\in W_{s}.\\
\end{align*}
To simplify the notations, we set
\begin{itemize}
\item $\displaystyle B(\rho)h:=\frac{1}{\beta(\rho)\int\limits_{\M}D(\rho)\, d V_g}\int\limits_{\M}P_1(\rho)h D(\rho)\, d V_g$, \hspace{1em}for $\rho\in W_s$ and $h\in{E_1}$,
\item $\displaystyle A(\rho):=\frac{1}{\beta(\rho)\int\limits_{\M}D(\rho)\, d V_g}\int\limits_{\M}F_1(\rho) D(\rho)\, d V_g$, \hspace{1em}for $\rho\in W_s$,
\item $G(\rho):=P(\rho)\rho-F(\rho)$, and $K(\rho):=A(\rho)+B(\rho)\rho$, \hspace{1em}for $\rho\in E_1\cap W_s$.
\end{itemize}
By \cite[Proposition~1]{Browd62} and \cite[Proposition~6.3]{ShaoSim13}, one checks
\begin{align}
\label{S6: Q, beta, P1, F1}
(Q,\frac{1}{\beta}) \in{C}^{\omega}(W_{s},E_0\times E_0), \hspace{.5em}{\text{and }}\hspace{.5em} (P_1,F_1)\in{C}^{\omega}(W_{s},\mathcal{L}(E_1,E_0)\times E_0).
\end{align}
Following the proof in \cite{EscSim98}, we can show that $B(\rho)\in\mathcal{L}(BC^2({\M}),E_0)$ for any $\rho\in W_{s}$. 
On account to
\begin{align*}
[f\mapsto \int\limits_{\M} f\, d V_g]\in \L(BC(\M),\R),
\end{align*}
\eqref{S6: Q, beta, P1, F1} and \cite[Proposition~6.3]{ShaoSim13}, we have
\begin{align}
\label{S6: regularity of A,B}
B\in{C}^{\omega}(W_{s},\mathcal{L}(BC^2({\M}),E_0)), \hspace{.5em} \text{and likewise,} \hspace{.5em}A\in{C}^{\omega}(W_{s},E_0).
\end{align}
The above arguments enable us to translate equation \eqref{MCF equation 1} into
\begin{align}
\label{MCF equation 2}
\begin{cases}
\rho_t+P(\rho)\rho=F(\rho),\\
\rho(0)=\rho_0{,}
\end{cases}
\end{align}
where $\rho_0\in W_{s}$. In particular,
\begin{align*}
(P,F)\in{C}^{\omega}(W_{s},\mathcal{L}(E_1,E_0)\times E_0).
\end{align*}
By examining the symbol of the principal part for $(P(\rho))_{\kappa}$, that is,
\begin{align*}
\hat{\sigma}(P(\rho))^{\pi}_{\kappa}(x,\xi)=\frac{1}{2}g_{\Gamma}^{ij}(x)\xi_i\xi_j,\hspace{1em}(x,\xi)\in \Q\times S^{m-1},
\end{align*}
it is easily seen that $P(\rho)$ is normally elliptic for any $\rho\in W_{s}$. By \eqref{S6: regularity of A,B}, $B(\rho)$ is a lower order perturbation compared to $P(\rho)-B(\rho)$. Following the discussion in the previous sections and \cite[Lemma~2.7(c)]{CleSim01}, the result in \cite{EscSim98} can be restated as: 
\begin{theorem}
\label{S6: well-p AMCF}
Suppose that $\rho_0\in W_{s}:=\{u\in bc^{2+s}(\M):\|\rho\|^{\M}_{\infty}<a\}$ with some $s\in (0,1)$ and $a>0$ sufficiently small. Pick $s<\alpha<1$ and $\gamma=\frac{2+s-\alpha}{2}$. Then equation \eqref{MCF equation 2} has a unique solution $\hat{\rho}$ on the maximal interval of existence $J(\rho_0):=[0,T(\rho_0))$ with some $T(\rho_0)>0$:
\begin{align*}
\hat{\rho}\in C^1_{1-\gamma}(J(\rho_0),bc^{\alpha}(\M))\cap C_{1-\gamma}(J(\rho_0),bc^{2+\alpha}(\M))\cap C(J(\rho_0),W_{s}) .
\end{align*}
\end{theorem}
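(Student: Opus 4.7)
The plan is to follow the template already used for the surface diffusion flow (Theorem~\ref{existence and uniqueness of SDF}) and for the Ricci-DeTurck flow (Theorem~\ref{existence for RF-Ange90}): verify that $-P(\rho)$ generates an analytic semigroup with continuous maximal regularity in a neighborhood of $\rho_0$, and then invoke the abstract quasilinear existence theorem of \cite[Theorem~2.7]{Ange90} together with a prolongation argument as in \cite[Theorem~4.1]{CleSim01}. The choice $\gamma = (2+s-\alpha)/2$ is dictated by the interpolation identity $E_\gamma = (E_0,E_1)^0_{\gamma,\infty} = bc^{2+s}(\M)$, which, by Proposition~\ref{interpolation result}, places $\rho_0 \in W_s \subset E_\gamma$ at exactly the intermediate regularity required for the time-weighted classes $C^1_{1-\gamma}$ and $C_{1-\gamma}$.

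First I would establish normal ellipticity and maximal regularity of the principal part. The principal symbol computation
\[
\hat\sigma(P(\rho))^\pi_\kappa(x,\xi) = \tfrac12 g_\Gamma^{ij}(x)\xi_i\xi_j,\qquad (x,\xi)\in\Q\times S^{m-1},
\]
together with the uniform positive definiteness of $g_\Gamma$ for $\rho\in W_s$ and the compactness of $\M$, shows that the local, quasilinear part of $P(\rho)$ is normally elliptic of order two. The same reasoning that gave \eqref{MR of RF} in Section~4, namely \cite[Theorem~3.4 and Theorem~3.6]{ShaoSim13}, then yields that this local part belongs to $\mathcal{M}_\gamma(E_1,E_0)$ for every $\rho\in W_s$.

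Next I would absorb the nonlocal averaging term $B(\rho)$. By \eqref{S6: regularity of A,B}, $B\in C^\omega(W_s,\L(BC^2(\M),E_0))$, so $B(\rho)$ factors through $BC^2(\M)\hookrightarrow E_0$ and therefore acts as a relatively compact lower-order perturbation of the principal part; in particular, it neither changes the principal symbol nor destroys normal ellipticity or maximal regularity, and $P(\rho)\in\mathcal{M}_\gamma(E_1,E_0)$ persists for every $\rho\in W_s$. Combined with $(P,F)\in C^\omega(W_s,\L(E_1,E_0)\times E_0)$, this places equation \eqref{MCF equation 2} squarely in the quasilinear framework of \cite[Theorem~2.7]{Ange90}, giving a unique local solution on some $[0,T_0]$ in the class $C^1_{1-\gamma}([0,T_0],E_0)\cap C_{1-\gamma}([0,T_0],E_1)$ with $\hat\rho(t)\in W_s$. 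Extension to the maximal interval $J(\rho_0)=[0,T(\rho_0))$ then follows verbatim from \cite[Theorem~4.1]{CleSim01}.

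The main obstacle is the nonlocal term $B(\rho)$. The point to verify carefully is that the real-analytic dependence $B\in C^\omega(W_s,\L(BC^2(\M),E_0))$, obtained from the smoothness of the volume element $D(\rho)$ and of $P_1(\rho)$, is indeed strong enough that the maximal regularity class $\mathcal{M}_\gamma$ is inherited by the full linearization $P(\rho)$ and that the contraction mapping underlying \cite[Theorem~2.7]{Ange90} still closes. This is the step performed in the Hölder setting by Escher--Simonett \cite{EscSim98}, and I would invoke either that analysis or the perturbation lemma \cite[Lemma~2.7(c)]{CleSim01} cited in the theorem to handle it.
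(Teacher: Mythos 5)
Your proposal follows essentially the same route as the paper: normal ellipticity of $P(\rho)$ from its principal symbol, maximal regularity $\mathcal{M}_{\gamma}(E_1,E_0)$ via \cite[Theorems~3.4, 3.6]{ShaoSim13} as in Sections 4--5, absorption of the nonlocal averaging term $B(\rho)$ as a lower-order perturbation through $BC^2(\M)$ via \cite[Lemma~2.7(c)]{CleSim01}, and then the quasilinear existence theory of \cite[Theorem~2.7]{Ange90} with prolongation as in \cite[Theorem~4.1]{CleSim01}, with $\gamma$ fixed by $E_\gamma=(E_0,E_1)^0_{\gamma,\infty}=bc^{2+s}(\M)$. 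This matches the paper's (very terse) argument, which restates the result of \cite{EscSim98} using exactly these ingredients.
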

For any $(t_0,\p)\in\dot{J}(\rho_0)\times{\M}$, choose $J:=[\varepsilon,T]\subset\dot{J}(\rho_0)$ with $t_0\in \mathring{J}$. Define ${\ez}(J)$, ${\ef}(J)$ and ${\Ub}(J)$ as in Section 5. In the sequel, we always use $\hat{\rho}$ exclusively for the solution in Theorem~\ref{S6: well-p AMCF}. Define $\Phi:{\Ub}(J)\times{\B}\rightarrow{\ez}(J)\times{E_1}$ by
\begin{align*}
\Phi(\rho,(\lambda,\mu))\mapsto\dbinom{\rho_t+(1+\xi^{\prime}\lambda){\tu}G({\tui}\rho)-{B}_{\lambda,\mu}(\rho)}{\gamma_{\varepsilon}(\rho)-\hat{\rho}(\varepsilon)}.
\end{align*}
By a similar argument to Section 4, \eqref{S5: beta} and \cite[Proposition~6.4]{ShaoSim13}, we obtain
\begin{align}
\label{S6: regularity of transformed beta}
[(\rho,\mu)\mapsto\tu \beta({\tui}\rho)]\in{C^{\omega}({\Ub}(J)\times\B},\ez(J)).
\end{align}
\begin{prop}
\label{S6: regularity of transformed integral}
$[(\rho,\mu)\mapsto{\tu}K({\tui}\rho)]\in{C^{\omega}({\Ub}(J)\times{\B},\ez(J))}$.
\end{prop}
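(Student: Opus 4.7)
The plan is to exploit that $A(\rho)$ and $B(\rho)\rho$ each factor as a real scalar (an integral over $\M$) multiplied by the function $1/\beta(\rho)$. Set
\begin{align*}
c_1(\rho):=\frac{\int_{\M}F_1(\rho)\,D(\rho)\,dV_g}{\int_{\M}D(\rho)\,dV_g},\qquad c_2(\rho):=\frac{\int_{\M}P_1(\rho)\rho\cdot D(\rho)\,dV_g}{\int_{\M}D(\rho)\,dV_g},
\end{align*}
so that $K(\rho)=\bigl(c_1(\rho)+c_2(\rho)\bigr)\,\frac{1}{\beta(\rho)}$. Since $\tu=\Theta^{\ast}_{\xi(t)\mu}$ is a pull-back by a diffeomorphism of $\M$, it fixes constant functions and commutes with pointwise inversion, whence
\begin{align*}
\tu K(\tui\rho)=\bigl(c_1(\tui\rho)+c_2(\tui\rho)\bigr)\cdot\frac{1}{\tu\beta(\tui\rho)}.
\end{align*}

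First I would establish the real analyticity of the scalar coefficients $(\rho,\mu)\mapsto c_i(\tui\rho)$ with values in $C(J,\R)$. Proposition~\ref{differentiability involving time}(b) applied with $\mathcal{A}=\id$ gives $\mu\mapsto\tui\in C^{\omega}(\B,C(J,\mathcal{L}(E_1)))$, so $(\rho,\mu)\mapsto\tui\rho$ is real analytic from $\ef(J)\times\B$ into $C(J,E_1)$; for $r$ small its image remains inside $C(J,W_{s})$. By \eqref{S6: Q, beta, P1, F1}, $\rho\mapsto F_1(\rho)$ and $\rho\mapsto D(\rho)$ are real analytic $W_{s}\to E_0$, and $\rho\mapsto P_1(\rho)\rho$ is real analytic $W_{s}\cap E_1\to E_0$; composition yields real analyticity of the three integrands into $C(J,E_0)$. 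The functional $f\mapsto\int_{\M}f\,dV_g$ is bounded linear $E_0\to\R$, and lifts to a bounded linear map $C(J,E_0)\to C(J,\R)$, which is trivially real analytic. For $a$ small enough $D(\rho)\geq\tfrac12$ on $W_{s}$, so $\int_{\M}D(\tui\rho)\,dV_g$ stays bounded below by a positive constant; division is thus real analytic, giving the claim for $c_i\circ\tui$.

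For the function-valued factor, \eqref{S6: regularity of transformed beta} provides $\tu\beta(\tui\rho)\in C^{\omega}(\Ub(J)\times\B,\ez(J))$. Since $\beta(\rho)$ is bounded below by a positive constant on $W_{s}$, so is $\tu\beta(\tui\rho)$, and pointwise inversion is real analytic on this open set of the Banach algebra $\ez(J)=C(J,E_0)$. Finally, the bilinear map $C(J,\R)\times\ez(J)\to\ez(J)$, $(s,f)\mapsto s\cdot f$, is bounded hence real analytic; composing all real-analytic pieces yields the proposition. The main obstacle is purely bookkeeping---tracking the target Banach space for each elementary operation (pull-back, composition, integration, inversion, multiplication) and verifying that $\tu$ commutes with constant multiplication and with pointwise inversion, both of which follow at once from its definition as a diffeomorphism pull-back.
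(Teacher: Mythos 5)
There is a genuine gap at the very first step of your argument for the scalar coefficients. You claim that Proposition~\ref{differentiability involving time}(b) with $\mathcal{A}=\id$ gives $[\mu\mapsto\tui]\in C^{\omega}(\B,C(J,\mathcal{L}(E_1)))$, so that $(\rho,\mu)\mapsto\tui\rho$ is jointly real analytic into $C(J,E_1)$ and you may then compose with the analyticity of $F_1$, $D$ and $P_1$ on $W_s$. But Proposition~\ref{differentiability involving time}(b) with $\mathcal{A}=\id$ only yields the analyticity of $\mu\mapsto\tu\circ\id\circ\tui=\id$, which says nothing about $\tui$ alone. In fact $\mu\mapsto\ttm$ (and hence $\mu\mapsto\tu$, $\tui$) is \emph{not} differentiable as an $\mathcal{L}(E_1)$-valued map: Proposition~\ref{ttm is a diffeomorphism}(b) gives only continuity, and the formula $\partial^{\alpha}_{\mu}[\ttm u]={\kb}\chi^{|\alpha|}{\tm}\partial^{\alpha}{\kf}{\varsigma}u$ in Theorem~\ref{differentiability of ttm on all function spaces} shows that each $\mu$-derivative costs one spatial derivative of $u$; this is precisely why that theorem requires $u\in C^{l+k}(\U,V)$ and why the whole conjugation machinery $\tu\mathcal{A}\tui$ exists. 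For a generic $\rho\in\Ub(J)$, which has no extra regularity near $\p$, the map $\mu\mapsto\tui\rho$ into $C(J,E_1)$ is merely continuous, so your composition argument for $(\rho,\mu)\mapsto F_1(\tui\rho)D(\tui\rho)$ and its integral collapses.

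The paper circumvents exactly this obstruction by a change of variables in the integral: it splits $\int_{\M}F_1(\tui\rho)D(\tui\rho)\,dV_g$ into the part over $\psi_{\iota}(B_4)^C$ (independent of $\mu$, handled by Lemma~\ref{quoting lemma}) and the part over $\psi_{\iota}(B_4)$, which after substituting $y=\theta_{\xi(t)\mu}(x)$ becomes $\int_{B_4}\ttu F_{1,\iota}(\ttui\rho)\,\ttu D_{\iota}(\ttui\rho)\,\ttu\sqrt{\det[g]}\;|\det(D\theta_{\xi(t)\mu})|\,dy$. Each factor is then shown to be analytic in $(\rho,\mu)$ by writing $F_{1,\iota}$ and $D_{\iota}$ as polynomial/rational expressions in conjugated differential operators $\ttu\mathcal{A}\ttui$ applied to $\rho$ and in transformed analytic coefficients, for which the results of Section~3 and \cite{EscPruSim03} do apply. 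Your outer structure (factoring $K$ as scalar coefficients times $1/\beta$, inverting $\tu\beta(\tui\rho)$ via \eqref{S6: regularity of transformed beta}, and multiplying) is sound, but the core of the proposition is the joint analyticity of the integrals, and that step cannot be obtained by naive composition; it needs the change-of-variables and operator-conjugation argument.
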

\begin{proof}
Pick $\rho\in {\Ub}(J)$. First notice that ${\tu}[\beta({\tui}\rho) K({\tui}\rho)]=\beta({\tui}\rho)K({\tui}\rho)$.
\smallskip\\
(i) For any $t\in J$, we decompose $I(\mu)(t):=\int\limits_{\M} F_1({\tui}(t)\rho(t))D({\tui}(t)\rho(t))\, d V_g$ into
\begin{align*}
\underbrace{\int\limits_{\psi_{\iota}(B_4)} \!\! F_1({\tui}(t)\rho(t))D({\tui}(t)\rho(t))\, d V_g}_{I_1(\mu)(t)}+ \underbrace{\int\limits_{\psi_{\iota}(B_4)^C} \!\! F_1({\tui}(t)\rho(t))D({\tui}(t)\rho(t))\, d V_g}_{I_2(\mu)(t)}.
\end{align*}
By Lemma~\ref{quoting lemma} and the independence of $I_2(\mu)(t)$ on $\mu$ for all $t\in J$, we infer that $[\mu\mapsto I_2(\mu)]\in C^{\omega}({\B},C(J))$. On the other hand, one computes
\begin{align*}
&\quad I_1(\mu)(t)=\int\limits_{B_4} F_{1,\iota}({\ttui}(t)\rho(t))D_{\iota}({\ttui}(t)\rho(t))\sqrt{\det[g]}\, dx \\
&=\int\limits_{B_4} {\ttu}(t)F_{1,\iota}({\ttui}(t)\rho(t)){\ttu}(t)D_{\iota}({\ttui}(t)\rho(t))  {\ttu(t)}\sqrt{\det[g]}\; |\det(D\theta_{\xi(t)\mu})| \, dy,
\end{align*}
where $F_{1,\iota}(\rho):={\kf}F_1(\rho)$ and $\beta_{\iota}(\rho)$, $D_{\iota}(\rho)$ are defined alike. Recall that notation-wise we do not distinguish $\rho$ from ${\kfk}\rho$. 

(ii) Let $\mathbb{E}:=C(J,BC(B_4))$. $|\det(D\theta_{\xi(t)\mu})|$ is a polynomial of $\mu_j$ with $BC^{\infty}$-coefficients multiplied by $\xi^n(t)$ with $n\in\N_0$, since $\theta_{\xi(t)\mu}\in{\Dfi}(\Qi)$. Accordingly, we infer that
\begin{align*}
[\mu\mapsto |\det(D\theta_{\xi(\cdot)\mu})|]\in C^{\omega}({\B},\mathbb{E}).
\end{align*}
(iii) Because $\det[g]\in BC^{\infty}(\Qi)\cap C^{\omega}(\Qi)$, \cite[Proposition~5.2(a)]{EscPruSim03} and \cite[Proposition~6.4, Remark~6.5]{ShaoSim13} imply that
\begin{align*}
[\mu\mapsto {\ttu}\sqrt{\det[g]}]\in C^{\omega}({\B},\mathbb{E}) .
\end{align*}
(iv) In $(\Uki,\varphi_{\iota})$, we can decompose $D_{\iota}^2(\rho)$ into
\begin{align*}
D_{\iota}^2(\rho)=\frac{1}{ \mathcal{P}_{\iota}(\rho)}(\mathcal{B}_{\iota}+ \sum_{k} \mathcal{A}_{\iota,k_1}\rho\cdots \mathcal{A}_{\iota,k_s}\rho) ,
\end{align*}
where $\mathcal{B}_{\iota}\in BC^{\infty}(\Qi) \cap C^{\omega}(\Qi)$, and $\mathcal{A}_{\iota,r}$ are linear differential operators of order at most one with coefficients
\begin{align*}
a^{\iota,r}_{\alpha}\in BC^{\infty}(\Qi) \cap C^{\omega}(\Qi).
\end{align*}
$\mathcal{P}_{\iota}(\rho)$ is a polynomial of $\rho$ with $BC^{\infty}(\Qi)\cap C^{\omega}(\Qi)$-coefficients. Argue similarly to Section 4 for functions on $\Qi$ and use \cite[Proposition~5.2]{EscPruSim03}, \cite[Proposition~6.4, Remark~6.5]{ShaoSim13}. We infer that
\begin{align*}
[(\rho,\mu)\mapsto {\ttu}D_{\iota}({\ttui}\rho)]\in C^{\omega}({\Ub}(J)\times{\B},\mathbb{E}). 
\end{align*}
(v) Taking into account the fact that $\beta_{\iota}(\rho) \leq 1$, \eqref{S5: beta} and \cite[Proposition~6.4, Remark~6.5]{ShaoSim13}, a semblable argument to (iv) implies that
\begin{align*}
[(\rho,\mu)\mapsto {\ttu}\beta_{\iota}({\ttui}\rho)]\in C^{\omega}({\Ub}(J)\times\B,\mathbb{E}).
\end{align*}
Since $\frac{F_{1,\iota}(\rho)}{\beta_{\iota}(\rho)}$ is a polynomial of $\rho$ and $\partial_j \rho$ with $BC^{\infty}\cap C^{\omega}$-coefficients, it is an immediate consequence of the point-wise multiplier theorem on $B_4$ that 
\begin{align*}
[(\rho,\mu)\mapsto {\ttu}F_{1,\iota}({\ttui}\rho)]\in C^{\omega}({\Ub}(J)\times\B,\mathbb{E}).
\end{align*}
(vi) Combining all the above results, the point-wise multiplier theorem now yields
\begin{align*}
[(\rho,\mu)\mapsto {\ttu}\{F_{1,\iota}({\ttui}\rho)D_{\iota}({\ttui}\rho)\sqrt{\det[g]}\} |\det(D\theta_{\xi\mu})|]\in{C^{\omega}({\Ub}(J)\times{\B},\mathbb{E})}. 
\end{align*}
It follows from 
\begin{align}
\label{S6: integral B_4}
[f\mapsto \int\limits_{B_4} f\, dx]\in \L(BC(B_4),\R),
\end{align}
that $[\mu\mapsto I_1(\mu)]\in C^{\omega}({\B}, C(J))$. Finally, we are in a position to conclude 
\begin{align*}
[(\rho,\mu)\mapsto  \int\limits_{\M} F_1({\tui}\rho)D({\tui}\rho)\, d V_g]\in{C^{\omega}({\Ub}(J)\times{\B},C(J))}.
\end{align*}
(vii) The decompositions in (i), (iv) and \eqref{S6: integral B_4} imply that 
\begin{align*}
[(\rho,\mu)\mapsto \frac{1}{\int\limits_{\M} D({\tui}\rho)\, d V_g}]\in C^{\omega}({\Ub}(J)\times{\B},C(J)).
\end{align*}
Modifying (i)-(vi) in an obvious way, we obtain that
\begin{align*}
[(\rho,\mu)\mapsto \int\limits_{\M} P_1({\tui}\rho){\tui}\rho D({\tui}\rho)\, d V_g] \in C^{\omega}({\Ub}(J)\times{\B},C(J)).
\end{align*}
Proposition~\ref{pointwise multiplication properties} and \eqref{S6: regularity of transformed beta} complete the proof.
\end{proof}
Following the proof of Section 4, we obtain
\begin{align}
\label{S6: regularity of H}
[(\rho,\mu)\mapsto {\tu}H_{{\tui}\rho}]\in C^{\omega}({\Ub}(J)\times{\B},{\ez}(J)).
\end{align}
We thus conclude from the above propositions that 
\begin{align*}
\Phi\in{C}^{\omega}({\Ub}(J)\times{\B},{\ez}(J)\times{E_1}).
\end{align*}
Similarly, it is immediate from the discussion in the previous sections that
\begin{align*}
D_1\Phi(\hat{\rho},(0,0))\in{\Lis}({\ef}(J),{\ez}(J)\times{E_1}).
\end{align*}
The implicit function theorem and Theorem \ref{S1: Theorem} thus yield the following theorems:
\begin{theorem}
Suppose that $\Gamma_0$ is a compact closed embedded oriented hypersurface in ${\R}^{m+1}$ belonging to the class $C^{2+s}$ for some $s\in (0,1)$. Then the averaged mean curvature flow \eqref{MCF equation 1} has a local solution $\Gamma=\{\Gamma(t):t\in[0,T)\}$ for some $T>0$. Moreover,
\begin{center}
$\mathcal{M}:=\bigcup_{t\in(0,T)}(\{t\}\times\Gamma(t))$ is a real analytic hypersurface in ${\R}^{m+2}$.
\end{center}
\end{theorem}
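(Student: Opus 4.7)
The plan is to mirror the argument used for the surface diffusion flow (Theorem~\ref{S5: SDF: main theorem}), since nearly all the heavy lifting has already been done in the preceding paragraphs. Starting from a $C^{2+s}$ hypersurface $\Gamma_0$, I would first invoke the parametrization construction from \cite{PruSim13} (recalled at the start of Section~5) to produce a real analytic compact closed reference manifold $({\M},g)$ together with an initial height function $\rho_0\in bc^{2+s}({\M})\cap W_s$ with $\Gamma_0={\im}(\Psi_{\rho_0})$. Here I note that $C^{2+s}({\M})\hookrightarrow bc^{2+s'}({\M})$ for any $s'\in(0,s)$, which puts $\rho_0$ into the space required by Theorem~\ref{S6: well-p AMCF} for some slightly smaller H\"older exponent.

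Next, Theorem~\ref{S6: well-p AMCF} provides a unique maximal solution $\hat{\rho}$ to \eqref{MCF equation 2} with the asserted regularity on $J(\rho_0)=[0,T(\rho_0))$. This yields the local existence of $\Gamma(t):={\im}(\Psi_{\hat{\rho}(t,\cdot)})$ for $t\in[0,T)$ with $T:=T(\rho_0)$. The remaining and central claim is the joint time–space analyticity of $\hat\rho$ on $\dot J(\rho_0)\times\M$. This is precisely the analogue of Theorem~\ref{S5: analyticity of SDF}, and it has in fact already been set up in the text: for any $(t_0,\p)\in\dot J(\rho_0)\times{\M}$ one chooses $J=[\varepsilon,T]\subset\dot J(\rho_0)$ with $t_0\in\mathring J$, introduces the parameter-dependent diffeomorphism $\{\ttl\}$ around $(t_0,\p)$, and considers
\[
\Phi(\rho,(\lambda,\mu))=\dbinom{\rho_t+(1+\xi^{\prime}\lambda){\tu}G({\tui}\rho)-{B}_{\lambda,\mu}(\rho)}{\gamma_{\varepsilon}(\rho)-\hat{\rho}(\varepsilon)}
\]
on ${\Ub}(J)\times\B\to{\ez}(J)\times E_1$.

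The two hypotheses of the analytic implicit function theorem have already been verified above: Proposition~\ref{S6: regularity of transformed integral} and \eqref{S6: regularity of H}, combined with Proposition~\ref{pointwise multiplication properties} and Proposition~\ref{transformed functions}, give $\Phi\in C^{\omega}({\Ub}(J)\times\B,{\ez}(J)\times E_1)$, while the normal ellipticity of $P(\hat\rho)$ together with Lemma~\ref{section 7: MR of linearization} (adapted via \cite[Lemma~2.7(c), Lemma~2.8(a)]{CleSim01}, the lower-order nature of $B(\hat\rho)$ being relevant here) yields $D_1\Phi(\hat\rho,(0,0))\in{\Lis}(\ef(J),\ez(J)\times E_1)$. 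Since $\Phi(\hat\rho_{\lambda,\mu},(\lambda,\mu))=0$ for all small $(\lambda,\mu)$, the analytic implicit function theorem produces $[(\lambda,\mu)\mapsto\hat\rho_{\lambda,\mu}]\in C^{\omega}(\mathbb{B}(0,r_0),\ef(J))$, and Theorem~\ref{S1: Theorem} then upgrades this to $\hat{\rho}\in C^{\omega}(\dot J(\rho_0)\times{\M})$.

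Finally, to pass from analyticity of $\hat\rho$ to the claim that $\mathcal{M}=\bigcup_{t\in(0,T)}(\{t\}\times\Gamma(t))$ is a real analytic hypersurface in ${\R}^{m+2}$, I would proceed exactly as in the proof of Theorem~\ref{S5: SDF: main theorem}: for each $(t_0,q)\in\mathcal{M}$ pick $\p\in{\M}$ with $\Psi_{\hat\rho}(t_0,\p)=q$ and a chart $(\Uk,\varphi_{\kappa})\in\mathfrak{A}$ with $\p\in\Uk$; since both the atlas and the reference embedding $\M\hookrightarrow{\R}^{m+1}$ are real analytic, and $\hat{\rho}\circ\psi_{\kappa}$ is real analytic on $\dot J(\rho_0)\times\Q$, the map
\[
[(t,x)\mapsto (t,\psi_{\kappa}(x)+\hat\rho(t,\psi_{\kappa}(x))\nu_{\M}(\psi_{\kappa}(x)))]
\]
is a real analytic local parametrization of $\mathcal{M}$, proving the assertion. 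The only genuinely nontrivial step is the verification that $\Phi$ is real analytic in both arguments, because the averaged term $K(\rho)$ involves the nonlocal functional $\int_{\M} F_1(\rho)D(\rho)\, dV_g$; but this obstacle has already been dispatched in Proposition~\ref{S6: regularity of transformed integral} by splitting the integral into a piece supported away from $\Uki$ (which depends trivially on $\mu$) and a piece over $B_4$ which is handled via the change-of-variables formula together with the analyticity results of Section~3.
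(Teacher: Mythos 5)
Your proposal is correct and follows essentially the same route as the paper: the paper's proof of this theorem consists precisely of the implicit function theorem argument already laid out in Section~6 (existence via Theorem~\ref{S6: well-p AMCF}, analyticity of $\Phi$ via Proposition~\ref{S6: regularity of transformed integral} and \eqref{S6: regularity of H}, invertibility of $D_1\Phi(\hat\rho,(0,0))$ from normal ellipticity and the lower-order nature of $B(\rho)$), combined with Theorem~\ref{S1: Theorem} and the same chart-by-chart parametrization of $\mathcal{M}$ used in the proof of Theorem~\ref{S5: SDF: main theorem}. No discrepancies to report.
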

\begin{remark}
The case that $\Gamma_0$ belongs to the class $C^2$ is permissible in the above theorem, provided that we consider $P,F$ as functions defined on an open subset $\{\rho\in bc^{1+s}(\M):\|\rho\|^{\M}_{\infty}<a\}$ of $bc^{1+s}(\M)$ in the proof of Theorem~\ref{S6: well-p AMCF} and use the quasi-linear structure of \eqref{MCF equation 2}. 
\end{remark}

Following an analogous discussion, we can show that the solution to the mean curvature flow 
\begin{align}
\label{RMCF equation 1}
\begin{cases}
V(t)=H_{\Gamma(t)},\\
\Gamma(0)=\Gamma_0{.}
\end{cases}
\end{align}
immediately becomes analytic.
\begin{theorem}
Suppose that $\Gamma_0$ is a compact closed embedded oriented hypersurface in ${\R}^{m+1}$ belonging to the class $C^{2+s}$ for some $0<s<1$.  Then the mean curvature flow \eqref{RMCF equation 1} has a local solution $\Gamma=\{\Gamma(t):t\in[0,T)\}$ for some $T>0$. Moreover,
\begin{center}
$\mathcal{M}:=\bigcup_{t\in(0,T)}(\{t\}\times\Gamma(t))$ is a real analytic hypersurface in ${\R}^{m+2}$.
\end{center}
\end{theorem}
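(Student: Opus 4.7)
The plan is to mirror the treatment of the averaged mean curvature flow in Section~6, with the substantial simplification that the purely local character of \eqref{RMCF equation 1} eliminates the nonlocal terms that dominated the work there. First I would parametrize $\Gamma(t)=\im(\Psi_\rho(t,\cdot))$ over a fixed real analytic closed reference manifold $(\M,g)$ chosen exactly as in Section~5. Using the identity $V=\beta(\rho)\rho_t$ and the Section~5 decomposition $H_\rho=P_1(\rho)\rho+F_1(\rho)$, problem \eqref{RMCF equation 1} rewrites as the quasilinear parabolic equation
\begin{align*}
\rho_t+P(\rho)\rho=F(\rho),\qquad \rho(0)=\rho_0,
\end{align*}
with $P(\rho):=-P_1(\rho)/\beta(\rho)$ and $F(\rho):=F_1(\rho)/\beta(\rho)$. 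The principal symbol of $P(\rho)$ coincides, up to the positive factor $1/\beta(\rho)$, with the one appearing for the AMCF in Section~6, so $P(\rho)$ is normally elliptic uniformly on $W_s:=\{\rho\in bc^{2+s}(\M):\|\rho\|_\infty^\M<a\}$.

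Next I would install the function-space scale of Section~6, taking $E_0=bc^\alpha(\M)$, $E_1=bc^{2+\alpha}(\M)$, $E_\gamma=bc^{2+s}(\M)$ for $0<s<\alpha<1$ and $\gamma=(2+s-\alpha)/2$. \cite[Proposition~1]{Browd62} together with \cite[Proposition~6.3]{ShaoSim13} yields $(P,F)\in C^\omega(W_s,\L(E_1,E_0)\times E_0)$; normal ellipticity and \cite[Theorem~3.6]{ShaoSim13} give $P(\rho)\in\mathcal{M}_\gamma(E_1,E_0)$ for every $\rho\in W_s$; and \cite[Theorem~2.7]{Ange90} combined with \cite[Theorem~4.1]{CleSim01} then produce a unique maximal solution
\begin{align*}
\hat\rho\in C^1_{1-\gamma}(J(\rho_0),bc^\alpha(\M))\cap C_{1-\gamma}(J(\rho_0),bc^{2+\alpha}(\M))\cap C(J(\rho_0),W_s).
\end{align*}

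To upgrade $\hat\rho$ to jointly real analytic regularity, I would fix an arbitrary $(t_0,\p)\in\dot J(\rho_0)\times\M$, pick $J=[\varepsilon,T]\subset\dot J(\rho_0)$ with $t_0\in\mathring J$, attach the parameter-dependent diffeomorphisms $\{\ttl:(\lambda,\mu)\in\B\}$ at $(t_0,\p)$, set $G(\rho):=P(\rho)\rho-F(\rho)$, and define
\begin{align*}
\Phi(\rho,(\lambda,\mu)):=\binom{\rho_t+(1+\xi'\lambda)\,\tu\, G(\tui\rho)-B_{\lambda,\mu}(\rho)}{\gamma_\varepsilon(\rho)-\hat\rho(\varepsilon)}
\end{align*}
on $\Ub(J)\times\B$, exactly as in Sections~4--6. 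By Proposition~\ref{transformed functions}, $\Phi(\hat\rho_{\lambda,\mu},(\lambda,\mu))=0$, and $D_1\Phi(\hat\rho,(0,0))\in\Lis(\ef(J),\ez(J)\times E_1)$ by the maximal-regularity argument of Lemma~\ref{section 7: MR of linearization}, since $DG(\hat\rho)$ and $P(\hat\rho)$ share their principal symbols. The implicit function theorem then yields $[(\lambda,\mu)\mapsto\hat\rho_{\lambda,\mu}]\in C^\omega(\mathbb{B}(0,r_0),\ef(J))$, and Theorem~\ref{S1: Theorem} combined with the embedding $E_1\hookrightarrow BC(\M)$ delivers $\hat\rho\in C^\omega(\dot J(\rho_0)\times\M)$.

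The only substantive hurdle is the joint real analyticity $\Phi\in C^\omega(\Ub(J)\times\B,\ez(J)\times E_1)$, and here the MCF is strictly easier than the AMCF: because no volume average $h_{\Gamma(t)}$ appears, the nonlocal analysis carried out in Section~6, in particular Proposition~\ref{S6: regularity of transformed integral}, is not invoked at all. What remains is precisely \eqref{S6: regularity of H}, already proved in Section~6 via the partition-of-unity decomposition $G=\sum_\kappa\tilde\pi_\kappa G$, the rational-function representation of $H_\rho$ in each chart, Proposition~\ref{pointwise multiplication properties}, Proposition~\ref{differentiability involving time} and \cite[Proposition~6.4]{ShaoSim13}; together with the real analyticity of $[(\lambda,\mu)\mapsto B_{\lambda,\mu}]$ from Proposition~\ref{transformed functions} this establishes $\Phi\in C^\omega$. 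Finally, by exactly the argument of Theorem~\ref{S5: SDF: main theorem}, the local graph representation $(t,x)\mapsto(t,\psi_\kappa(x)+\hat\rho(t,\psi_\kappa(x))\nu_\M(\psi_\kappa(x)))$ of $\mathcal M=\bigcup_{t\in(0,T)}\{t\}\times\Gamma(t)$ is real analytic in $(t,x)$, so $\mathcal M$ is a real analytic hypersurface in $\R^{m+2}$.
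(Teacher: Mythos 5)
Your proposal is correct and follows exactly the route the paper intends: the paper's own ``proof'' is simply the remark that one argues as for the averaged mean curvature flow, and your write-up is precisely that adaptation, correctly noting that dropping $h_{\Gamma(t)}$ removes the nonlocal analysis of Proposition~\ref{S6: regularity of transformed integral} and leaves only \eqref{S6: regularity of H} together with the implicit-function-theorem machinery of Sections~3--6.
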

\begin{remark}
An analogous result to Corollary~\ref{S5: unique continuation} holds for the (averaged) mean curvature flow.
\end{remark}

\section*{Acknowledgements}
I would like to express my great gratitude to my advisor, Gieri Simonett, for his help and invaluable suggestions throughout my research.

\end{document}